\newtheorem{thm}{Theorem}[section]
\newtheorem{cor}[thm]{Corollary}
\newtheorem{lem}[thm]{Lemma}
\newtheorem{prop}[thm]{Proposition}
\theoremstyle{definition}
\newtheorem*{defn}{Definition}
\newtheorem{conj}{Conjecture}[]
\newtheorem{ex}{Example}[section]
\theoremstyle{remark}
\newtheorem{remark}[thm]{Remark}
\numberwithin{equation}{section}
\title{Solutions of equations involving the modular $j$ function}
\author[Sebastian Eterovi\'c]{Sebastian Eterovi\'c}
\address{Department of Mathematics, University of California, Berkeley, CA, USA.}
\email{eterovic@math.berkeley.edu}
\author[Sebasti\'an Herrero]{Sebasti\'an Herrero}
\address{Instituto de Matem\'aticas,
Pontificia Universidad Cat\'olica de Valpara\'iso,
Chile.}
\email{sebastian.herrero.m@gmail.com}
\date{\today}
\begin{document}

\maketitle

\begin{abstract}
    Inspired by work done for systems of polynomial exponential equations, we study systems of equations involving the modular $j$ function. We show general cases in which these systems have solutions, and then we look at certain situations in which the modular Schanuel conjecture implies that these systems have generic solutions. An unconditional result in this direction is proven for certain polynomial equations on $j$ with algebraic coefficients.
\end{abstract}


\section{Introduction}

A significant body of work has been produced towards studying systems of  polynomial exponential equations, and in particular, to determine which algebraic varieties $V\subseteq\mathbb{C}^{2n}$ have generic points of the form $(x_{1},\ldots,x_{n},\exp(x_{1}),\ldots,\exp(x_{n}))$, where $\exp$ denotes the usual exponential function $\exp(x)=e^x$ for $x$ in $\mathbb{C}$. Similarly, it is of interest to determine which algebraic varieties $V\subseteq\mathbb{C}^{n+1}$ have generic points of the form $(x,\exp(x),\exp \circ \exp(x),\ldots,\exp \circ\, \cdots \circ \exp(x))$; see \cite{bays-kirby}, \cite{daquino2}, \cite{kirby-zilber}, \cite{mantova}, \cite{marker} for some important results in this area. These questions are in great part motivated by the work of  Zilber on pseudo-exponentiation (see \cite{kirby-zilber} and \cite{zilberexp}), but due to their geometric nature, they still make sense if we replace $\exp$ by another holomorphic function. In this paper we focus our attention on the modular $j$ function, which is the unique holomorphic function defined on the upper-half plane $\mathbb{H}:=\{z\in \mathbb{C}:\mathrm{Im}(z)>0\}$ that is invariant under the action of the modular group $\mathrm{SL}_2(\mathbb{Z})$ and has a Fourier expansion of the form
\begin{equation}\label{eq:j-fourier-expansion}
j(z)=q^{-1}+\sum_{k=0}^{\infty}a_kq^k \text{ with }q:=\exp(2\pi i z)  \text{ and }a_k\in \mathbb{C} 
\end{equation}
(see \textsection\ref{sec:background} for details). There are deep reasons for choosing to work with this function. On one hand,  there are  various results showing that the $j$ function has algebraic and transcendence properties that parallel those of $\exp$.  
For example, we know how to use specific values of $j$ and $\exp$ to construct abelian extensions of imaginary quadratic fields and abelian extensions of $\mathbb{Q}$, respectively; Schneider's theorem \cite{schneider} gives an analogue of Lindemann's theorem\footnote{Lindemann's theorem states that $\mathrm{tr.deg.}_{\mathbb{Q}}(\mathbb{Q}(z,\exp(z))) = 0$ if and only if $z=0$.} for $\exp$;
and more recently, Pila and Tsimerman \cite{pila-tsimerman} have proven a functional transcendence property for the $j$ function which is an analogue of the Ax--Schanuel theorem \cite[Corollary 1]{ax} for $\exp$. On the other hand, the main theorem  in \cite{B-SDGP} states that $j(z)$ is transcendental if $\exp(2\pi i z)$ is algebraic, giving a concrete connection between the algebraic and transcendence properties of these functions. 


The purpose of  this paper is to obtain analogues for the  $j$ function of some of the  results for $\exp$ proven by D'Aquino, Fornasiero and Terzo \cite{daquino2}, Mantova \cite{mantova} and Marker \cite{marker}.  
%
Specifically, the motivating questions of this paper are the following:
\begin{enumerate}
    \item[1.] Under what conditions on a given irreducible algebraic variety $V\subseteq \mathbb{C}^{2n}$ can we ensure that $V$ contains a point of the form $(z_{1},\ldots,z_{n},j(z_{1}),\ldots,j(z_{n}))$ with $z_1,\ldots,z_n\in \mathbb{H}$, and furthermore, that there is such a point which is generic over a given finitely generated subfield of $\mathbb{C}$?
     \item[2.] Under what conditions  on a given irreducible algebraic variety $V\subseteq \mathbb{C}^{n+1}$ can we ensure that $V$ contains a point of the form $(z,j_1(z),\ldots,j_n(z))$, where $j_n$ denotes the $n$-th fold composition of $j$ with itself and $z$ is in the domain of definition of $j_n$, and when can we assure that generic (over a given finitely generated subfield of $\mathbb{C}$) points exist?
\end{enumerate}

Some versions of the first question  have been studied by Aslanyan in the setting of differential fields; see \cite{vahagn2} and \cite{vahagn}.

\subsection{Main results}
The first two main theorems of this paper give partial answers to the first question. More specifically, Theorem \ref{th:main1} bellow 
gives the existence of points of the form $(z_1,\ldots,z_n,j(z_1),\ldots,j(z_n))$ for a certain family of varieties, and Theorem \ref{th-main-2} provides a conditional result on the existence of generic points of this form in plane curves. In order to state these results, we introduce the following notation. Given a positive integer $n$ we define
$$E_j^n:=\{(z_1,\ldots,z_n,j(z_1),\ldots,j(z_n)):z_1,\ldots,z_n\in \mathbb{H}\},$$
which is a subset of $\mathbb{H}^n\times \mathbb{C}^{n}$.

\begin{thm}\label{th:main1}
Let $V\subseteq \mathbb{C}^{2n}$ be an irreducible algebraic variety and let $\pi:\mathbb{C}^{2n}\rightarrow\mathbb{C}^{n}$ be the projection onto the first $n$ coordinates. If $\pi(V)$ is Zariski dense in $\mathbb{C}^{n}$, then $\pi(E_j^n\cap V)$ is Zariski dense in $\mathbb{C}^n$. In particular, $V$ contains infinitely many points of the form $(z_{1},\ldots,z_{n},j(z_{1}),\ldots,j(z_{n}))$.
\end{thm}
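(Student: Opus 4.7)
My plan is to proceed by induction on $n$, with the base case $n=1$ handled through complex-analytic properties of the $j$-function, and the inductive step reducing the problem to dimension $n-1$ by incorporating the last modular condition $w_n=j(z_n)$ into a Zariski closure. I begin with the following reduction: Zariski density of $\pi(E_j^n\cap V)$ in $\mathbb{C}^n$ is equivalent to the assertion that, for every proper Zariski-closed subset $W\subsetneq\mathbb{C}^n$, the Zariski-open subvariety $V\setminus\pi^{-1}(W)$ (still irreducible with dense $\pi$-image, since $V$ is irreducible and $\pi(V)$ is dense) contains at least one point of $E_j^n$. Thus it suffices to establish existence of a single point of $E_j^n$ in every irreducible $V\subseteq\mathbb{C}^{2n}$ with $\pi(V)$ Zariski dense.

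For the base case $n=1$, take $V=Z(f)$ with $f$ irreducible and $\deg_yf\geq 1$ (the case $V=\mathbb{C}^2$ is trivial and vertical lines are excluded by the hypothesis). The function $g(z):=f(z,j(z))$ is holomorphic on $\mathbb{H}$ and nonzero since $j$ is transcendental over $\mathbb{C}(z)$. Since any infinite subset of $\mathbb{C}$ is Zariski dense, it suffices to show that $g$ has infinitely many zeros. Using the $q$-expansion $j(z)=q^{-1}+O(1)$ (with $q=e^{2\pi iz}$) at the cusp together with the horizontal periodicity $j(z+1)=j(z)$, the argument principle applied to rectangles in $\mathbb{H}$ of increasing horizontal length produces a zero count that grows without bound, giving infinitely many zeros.

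For the inductive step, assume the theorem for $n-1$. Consider the complex-analytic subset
\[
V^\sharp:=\{(z,w)\in V : z_n\in\mathbb{H},\ w_n=j(z_n)\}
\]
of $V$; this has complex dimension at least $\dim V-1\geq n-1$. Let $\widetilde V\subseteq\mathbb{C}^{2(n-1)}$ be the Zariski closure of the image of $V^\sharp$ under the projection dropping $(z_n,w_n)$. Then $\widetilde V$ is an algebraic subvariety with $\dim\widetilde V\geq n-1$, and its first-$(n-1)$-coordinate projection is Zariski dense in $\mathbb{C}^{n-1}$ (following from density of $\pi(V)$ in $\mathbb{C}^n$ together with the surjectivity of $j$ on $\mathbb{H}$). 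Applying the inductive hypothesis to a suitable irreducible component of $\widetilde V$ with dense first-$(n-1)$-coordinate projection, we obtain a Zariski-dense set of $(z_1^*,\ldots,z_{n-1}^*)\in\mathbb{H}^{n-1}$ with $(z_1^*,\ldots,z_{n-1}^*,j(z_1^*),\ldots,j(z_{n-1}^*))\in\widetilde V$. Away from a thin bad subset, each such tuple lifts (via $V^\sharp$) to a point $(z_1^*,\ldots,z_n^*,j(z_1^*),\ldots,j(z_n^*))\in V$ with $z_n^*\in\mathbb{H}$, giving the claimed density.

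The hardest step is the base case, where the $q$-expansion, modularity, and transcendence of $j$ must all be invoked to obtain the zero count via the argument principle; this is where the problem diverges most sharply from the analogous exponential-polynomial setting, and where one cannot simply import the exp-polynomial zero-counting arguments verbatim. A delicate secondary issue is verifying that the Zariski closure $\widetilde V$ retains the needed dense first-$(n-1)$-coordinate projection, particularly when $\dim V=n$ and the naive fiber over $(z_n^*,j(z_n^*))$ has dimension $n-2$ (too small for a direct inductive step applied to a single fiber); this is why one takes the closure of the image of the whole family $V^\sharp$ rather than a single fiber, and relies on surjectivity of $j:\mathbb{H}\to\mathbb{C}$ together with the dominance of $\pi|_V$ to ensure generic $(z_1,\ldots,z_{n-1})\in\pi(V)$ admit some $z_n\in\mathbb{H}$ completing to a point of $V^\sharp$.
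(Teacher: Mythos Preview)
Your inductive scheme has a genuine gap in the lifting step. You obtain an $E_j^{n-1}$-point $p=(z_1^*,\ldots,z_{n-1}^*,j(z_1^*),\ldots,j(z_{n-1}^*))$ in $\widetilde V$, which is only the \emph{Zariski closure} of $\mathrm{pr}(V^\sharp)$; there is no reason for $p$ to lie in the analytic image $\mathrm{pr}(V^\sharp)$ itself. To lift $p$, you must find $z_n^*\in\mathbb H$ with $(z_1^*,\ldots,z_n^*,j(z_1^*),\ldots,j(z_n^*))\in V$, i.e.\ a point $(z_n^*,j(z_n^*))$ in the algebraic fibre $F_p=\{(z_n,w_n):(\,p\text{ completed by }z_n,w_n\,)\in V\}\subseteq\mathbb C^2$. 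This is \emph{itself} an instance of the problem you are trying to solve: if $F_p$ is a curve with dominant $z_n$-projection you need the base case again (with coefficients that are not polynomial but depend on the specific, non-generic point $p$); if $F_p$ is a vertical line $\{z_n=c\}$ you need $c\in\mathbb H$; and if $F_p$ is finite or empty the lift can fail outright. Your phrase ``away from a thin bad subset each such tuple lifts'' hides exactly the transcendental content of the theorem, and you give no argument that the bad set is contained in a proper Zariski-closed subset of $\mathbb C^{n-1}$ avoidable by the inductive hypothesis.

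The base case is also under-argued: for $g(z)=f(z,j(z))$ the argument principle on a rectangle requires controlling the winding contribution of the bottom edge $\{\mathrm{Im}\,z=y_0\}$, where $j$ is bounded and periodic but $f$ depends polynomially on $z$; the contribution there is $O(T)$ with an uncontrolled constant, so the top-edge estimate $\sim dT$ does not immediately dominate.

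The paper's route is quite different and sidesteps both issues. Rather than inducting on $n$, it first replaces $V$ by a subvariety of \emph{triangular form} of dimension $n$ (by choosing a generic $\mathbf x_0\in\pi(V)$ and taking successive minimal polynomials of the $y_i$), so that locally $V$ is a graph $(w_1,\ldots,w_n)=H(\mathbf z)$ via the implicit function theorem. Then all $n$ equations $j(z_i)=H_i(\mathbf z)$ are solved \emph{simultaneously} by a several-variable Rouch\'e argument (Proposition~\ref{prop:masser}): one picks $\mathbf w\in\mathbb H^n$ with $j(w_i)=H_i(\mathbf x_0)$ and uses that $\Gamma$-orbits accumulate on $\mathbb R$ to bring $\gamma_k\mathbf w$ close to $\mathbf x_0$, where continuity of $H$ makes Rouch\'e applicable on a small polydisc. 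This simultaneous approach avoids any need to lift solutions from lower dimension.
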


One of the key ingredients in our proof of Theorem \ref{th:main1} is Proposition \ref{prop:masser} in \textsection \ref{sec:equations_automorphic}, which is the automorphic analogue of a result due to  Brownawell and Masser on the existence of solutions of certain systems of  polynomial exponential equations (see \cite[Theorem 2.1]{daquino2} and also \cite[Proposition 2]{brownawell-masser}). The version of Theorem \ref{th:main1} for $\exp$ is \cite[Lemma 2.10]{daquino2}.


The second main theorem of this paper shows that the modular Schanuel conjecture (Conjecture \ref{conj:msc} in \textsection\ref{subsec:msc}) implies the existence of generic points in $E_j^1\cap V$ when $V$ is an irreducible plane algebraic curve that is not a horizontal or vertical line. Conjecture \ref{conj:msc} is the modular version of Schanuel's classical conjecture for the exponential function (see \cite[pp.~30--31]{lang2}). We restrict to curves that are not horizontal nor vertical lines since those cases are easy to analyze  (see \textsection \ref{sec:excl}). 

\begin{thm}\label{th-main-2}
Let $V\subset \mathbb{C}^2$ be an irreducible algebraic curve  that is not a horizontal nor a vertical line, and let $K$ be a finitely generated subfield  of $\mathbb{C}$ over which $V$ is defined. Then the modular Schanuel conjecture  implies that  there exist infinitely many points in $V$ of the form $(z,j(z))$ with $z\in \mathbb{H}$ and $\mathrm{tr.deg.}_{K}(K(z,j(z))) = 1$. 
\end{thm}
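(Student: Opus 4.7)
The plan is to argue by contradiction. Suppose only finitely many points $(z, j(z)) \in V$ satisfy $\mathrm{tr.deg.}_K(K(z, j(z))) = 1$. Because $V$ is irreducible and neither a horizontal nor a vertical line, its defining polynomial $F(x, y) \in K[x, y]$ depends nontrivially on both variables, and in fact $F(z, y)$ is a nonzero polynomial in $y$ for every $z \in \mathbb{C}$ (otherwise $F$ would admit a nontrivial factor in $K[x]$, making $V$ either reducible or a union of vertical lines). Hence for each $z$ there are only finitely many $w$ with $F(z, w) = 0$, which means $\mathrm{tr.deg.}_K(K(z, j(z))) = 1$ is equivalent to $z \notin \overline{K}$, while $z \in \overline{K}$ forces $j(z) \in \overline{K}$ and $\mathrm{tr.deg.}_K(K(z, j(z))) = 0$. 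Theorem \ref{th:main1} with $n = 1$ yields infinitely many $z \in \mathbb{H}$ with $(z, j(z)) \in V$; by the contradiction hypothesis all but finitely many of these $z$ lie in $\overline{K}$, producing an infinite family $\{(z_i, j(z_i))\}_{i \in \mathbb{N}}$ of distinct points of $V \cap E_j^1$ with $z_i, j(z_i) \in \overline{K}$.

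Let $d := \mathrm{tr.deg.}_{\mathbb{Q}}(K)$, which is finite since $K$ is finitely generated. I intend to extract $d + 1$ indices $i_1, \ldots, i_{d+1}$ such that $z_{i_1}, \ldots, z_{i_{d+1}}$ are pairwise inequivalent under the natural action of $\mathrm{GL}_2^+(\mathbb{Q})$ on $\mathbb{H}$ and none of them is a CM point. Granting this, the modular Schanuel conjecture yields
\[
\mathrm{tr.deg.}_{\mathbb{Q}}\bigl(\mathbb{Q}(z_{i_1}, \ldots, z_{i_{d+1}}, j(z_{i_1}), \ldots, j(z_{i_{d+1}}))\bigr) \geq d + 1,
\]
but since all these coordinates lie in $\overline{K}$, the left-hand side is bounded by $\mathrm{tr.deg.}_{\mathbb{Q}}(\overline{K}) = d$, the desired contradiction. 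Producing such a selection reduces to two sub-claims: (A) only finitely many $z_i$ are CM points, and (B) for each $z^* \in \mathbb{H}$, only finitely many $z_i$ lie in the $\mathrm{GL}_2^+(\mathbb{Q})$-orbit of $z^*$. Given (A) and (B), the selection proceeds greedily: first discard the CM $z_i$, then pick $z_{i_1}$, then $z_{i_2}$ outside the Hecke orbit of $z_{i_1}$, and so on, continuing $d+1$ times using that at each stage infinitely many candidates remain.

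For (A), I expect a routine argument based on the arithmetic of singular moduli: for a CM point $\tau$ of discriminant $D$, both $\tau$ and $j(\tau)$ are algebraic with degrees and heights growing with $|D|$, while $F(\tau, j(\tau)) = 0$ imposes a polynomial relation of bounded degree and height, restricting $|D|$ to a finite set. The principal obstacle is (B), since the orbit $\mathrm{GL}_2^+(\mathbb{Q}) \cdot z^*$ is dense in $\mathbb{H}$ and the discrete zero set $\{z \in \mathbb{H} : F(z, j(z)) = 0\}$ could \emph{a priori} meet it infinitely often. I would handle (B) via the Pila--Tsimerman modular Ax--Schanuel theorem for $j$: a hypothetical infinite sequence $z_i = g_i z^*$ with $(z_i, j(z_i)) \in V$ and $\det g_i = N_i \to \infty$ produces, through the Hecke correspondences $\Phi_{N_i}(y_1, y_2) = 0$, points of the auxiliary varieties $W_{N_i} \subset \mathbb{C}^4$ cut out by two copies of $F$ together with $\Phi_{N_i}$, and the mismatch between the bounded $\deg_y F$ and the unbounded $\deg_y \Phi_{N_i}$ ultimately forces a weakly special projection in $\mathbb{H}^2$ incompatible with $V$ being a plane curve that is neither horizontal nor vertical. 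An alternative route is a direct elimination argument using modular polynomials, bypassing Ax--Schanuel.
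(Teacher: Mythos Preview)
Your overall skeleton is exactly the paper's: your claim (A) is Proposition~\ref{prop:special}, your use of MSC to bound the number of $G$-orbits among the $z_i\in\overline K$ is Lemma~\ref{lem:findim}, and your claim (B) is Proposition~\ref{prop:gsol}. The paper then finishes by feeding the resulting finite bad set $J$ back into Proposition~\ref{prop:conj1fortriangularform}, whereas you finish by contradiction; these are equivalent reorderings.

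The genuine gap is your proof sketch for (B). Modular Ax--Schanuel is a \emph{functional} transcendence statement about components of the intersection of an algebraic variety with the graph of $j$; it says nothing about a single numerical point $(z^\ast,gz^\ast,j(z^\ast),j(gz^\ast))$, so the ``weakly special projection'' you allude to does not materialise. Your alternative ``direct elimination using modular polynomials'' also does not bound $N$: the resultant of $F(gz^\ast,Y)$ and $\Phi_N(j(z^\ast),Y)$ vanishes, but without an \emph{a priori} lower bound for $[\mathbb{Q}(j(z^\ast),j(gz^\ast)):\mathbb{Q}(j(z^\ast))]$ in terms of $N$ you cannot conclude. That lower bound is precisely the hard input: the paper obtains it from Pila's degree lemma (Lemma~\ref{l:Pila}) when $j(z^\ast)\notin\overline{\mathbb{Q}}$, and from Pellarin's explicit isogeny estimate (Theorem~\ref{t:Pellarin}, ultimately Baker's method) together with the cyclicity of minimal-degree isogenies (Lemma~\ref{l:masser-wustholz}) when $j(z^\ast)\in\overline{\mathbb{Q}}$. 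This is the step where the nontrivial arithmetic enters, and it cannot be replaced by Ax--Schanuel or naive elimination.

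A minor point on (A): your sketch invokes heights, but heights are not needed and would be awkward here since $K$ need not be a number field. What works is the degree argument: $F(\tau,Y)=0$ bounds $[K(\tau,j(\tau)):K(\tau)]$, Lemma~\ref{lem:bound-degree} descends this to a bound on $[\mathbb{Q}(\tau,j(\tau)):\mathbb{Q}(\tau)]=h(\mathcal{O}_\tau)$, and then Lemma~\ref{lem:class_number} gives finiteness.
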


The corresponding (conditional) result for the exponential function is due to Mantova; see \cite[Theorem 1.2]{mantova}. There, the author reduces the problem, assuming Schanuel's conjecture, to a finiteness statement on rational solutions of certain polynomial exponential equations, which is proven unconditionally in an appendix written together with Zannier. The proof of that statement is  based on Baker's theorem on linear forms in logarithms.

For curves defined over $\overline{\mathbb{Q}}$ (where $\overline{\mathbb{Q}}$ denotes the algebraic closure of $\mathbb{Q}$ in $\mathbb{C}$), we prove an unconditional version of Theorem \ref{th-main-2} in \textsection \ref{subsec:unconditional_result} (see Proposition \ref{prop:unconditional}). 

The final two main results of this paper are about solutions of equations involving compositions of $j$ with itself, and give partial answers to our second question. Even though expressions like $j_2(z)=j(j(z))$ are not defined in all of $\mathbb{H}$, we can still find solutions in some situations. For a  positive integer $n$, we denote by $\mathbb{H}_n$ the maximum domain of definition of $j_n$ (see \S \ref{sec:background} for details). 

\begin{thm}\label{th:main3}
Let $V\subset \mathbb{C}^{n+1}$ be an algebraic hypersurface defined by an irreducible polynomial $p(X,Y_{1},\ldots,Y_{n})$ in $\mathbb{C}[X,Y_{1},\ldots,Y_{n}]$ with $\frac{\partial p}{\partial Y_{n}} \neq 0$. Then there are infinitely many points $z$ in $\mathbb{H}_n$ such that $(z, j(z),\ldots, j_{n}(z))\in V$.
\end{thm}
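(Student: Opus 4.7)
The plan is to reduce Theorem \ref{th:main3} to a single application of Theorem \ref{th:main1} by encoding the iterated structure $z_{i+1} = j(z_i)$ inside a well-chosen auxiliary variety in $\mathbb{C}^{2n}$. Labelling the coordinates of $\mathbb{C}^{2n}$ as $(z_1, \ldots, z_n, w_1, \ldots, w_n)$, I would let $W \subseteq \mathbb{C}^{2n}$ be the subvariety cut out by the linear equations $z_{i+1} = w_i$ for $1 \leq i \leq n-1$ together with $p(z_1, w_1, \ldots, w_n) = 0$. Combined with the defining relations $w_i = j(z_i)$ of $E_j^n$, the linear equations force $z_{i+1} = j(z_i)$, so that $z_2 = j(z_1)$, $z_3 = j_2(z_1), \ldots, z_n = j_{n-1}(z_1)$ and $w_i = j_i(z_1)$ for $1 \leq i \leq n$; the remaining equation $p(z_1, w_1, \ldots, w_n) = 0$ is then precisely the condition $(z_1, j(z_1), \ldots, j_n(z_1)) \in V$.

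Next I would verify the two hypotheses of Theorem \ref{th:main1} for $W$. For irreducibility, using the linear equations to eliminate $z_2, \ldots, z_n$ yields an isomorphism between $W$ and the hypersurface $\{p = 0\} \subseteq \mathbb{C}^{n+1}$ with coordinates $(z_1, w_1, \ldots, w_n)$; the latter is irreducible because $p$ is. For Zariski density of $\pi(W)$ in $\mathbb{C}^n$, I would view $p$ as a polynomial of positive degree in $Y_n$, using the hypothesis $\partial p / \partial Y_n \neq 0$. Its leading coefficient $q_d(X, Y_1, \ldots, Y_{n-1})$ is a nonzero polynomial, and whenever $q_d(x, y_1, \ldots, y_{n-1}) \neq 0$ one can solve $p(x, y_1, \ldots, y_{n-1}, y_n) = 0$ for $y_n$, so $\pi(W)$ contains the nonempty Zariski open subset $\{q_d \neq 0\}$ of $\mathbb{C}^n$.

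With these checks in place, Theorem \ref{th:main1} produces infinitely many points in $E_j^n \cap W$. Each such point is determined by its first coordinate $z_1$, and the conditions $z_1 \in \mathbb{H}$ and $z_i = j_{i-1}(z_1) \in \mathbb{H}$ for $i = 2, \ldots, n$ (built into membership in $E_j^n$) are equivalent to $z_1 \in \mathbb{H}_n$. Distinct points correspond to distinct $z_1$, so translating back produces infinitely many $z \in \mathbb{H}_n$ with $(z, j(z), \ldots, j_n(z)) \in V$, as claimed. The main conceptual step is the choice of $W$: recognising that the coordinates $z_2, \ldots, z_n$ can simultaneously play the role of arguments of $j$ (via $E_j^n$) and of images of $j$ (via the linear equations of $W$) is what allows a single application of Theorem \ref{th:main1} to capture all $n$ iterated $j$-values at once. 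Once this setup is chosen, the remaining verifications are routine.
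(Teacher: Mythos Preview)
Your proof is correct, and it takes a genuinely different route from the paper's. The paper argues directly at the analytic level: it writes $p$ as a polynomial in $Y_n$, uses the resultant of $p$ and $\partial p/\partial Y_n$ to find a real point where the Implicit Function Theorem applies, obtains a local holomorphic parametrisation $Y_n = H(X,Y_1,\ldots,Y_{n-1})$ of $V$, and then applies Proposition~\ref{prop:masser} to the system $j(z)=z_1,\ j(z_1)=z_2,\ \ldots,\ j(z_{n-1})=H(z,z_1,\ldots,z_{n-1})$.

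Your argument instead packages the iterated structure into the auxiliary variety $W\subseteq\mathbb{C}^{2n}$ and invokes Theorem~\ref{th:main1} as a black box. This is a clean observation: it shows that Theorem~\ref{th:main3} is in fact a formal corollary of Theorem~\ref{th:main1}, with no additional analytic input needed. The verifications you give (irreducibility of $W$ via the linear elimination of $z_2,\ldots,z_n$, dominance of $\pi$ via the leading coefficient $q_d$, and the translation of $E_j^n\cap W$ back to points $z\in\mathbb{H}_n$) are all correct. What the paper's approach buys is directness and transparency about where the analytic content lies; what yours buys is economy, since the Implicit Function Theorem step and the invocation of Proposition~\ref{prop:masser} are already absorbed into the machinery behind Theorem~\ref{th:main1}.
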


Assuming the modular Schanuel conjecture, we prove the existence of generic points in the following setting.

\begin{thm}\label{thm:main4}
Let $V\subset \mathbb{C}^3$ be an algebraic variety defined by an irreducible polynomial  $p(X,Y_{1},Y_{2})$ in $\overline{\mathbb{Q}}[X,Y_{1},Y_{2}]$ with $\frac{\partial p}{\partial X},\frac{\partial p}{\partial Y_2}\neq 0$. Then, the modular Schanuel conjecture implies that there exist infinitely many points $z$  in $\mathbb{H}_2$ such that $(z,j(z),j_2(z))\in V$ and $\mathrm{tr.deg.}_{\mathbb{Q}}(\mathbb{Q}(z,j(z),j_{2}(z))) = 2$.
\end{thm}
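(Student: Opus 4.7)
Plan. The strategy is to combine Theorem~\ref{th:main3}, which produces an infinite set of solutions unconditionally, with the modular Schanuel conjecture (MSC), used to bound the transcendence degree from below, and then to handle the residual ``bad'' locus via a finiteness argument.

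First, by Theorem~\ref{th:main3} applied to $p$ (whose hypotheses hold by assumption), there is an infinite set $S\subseteq\mathbb{H}_2$ such that $(z,j(z),j_2(z))\in V$ for every $z\in S$. Since $p\in\overline{\mathbb{Q}}[X,Y_1,Y_2]$, the relation $p(z,j(z),j_2(z))=0$ already forces $\mathrm{tr.deg.}_{\mathbb{Q}}\mathbb{Q}(z,j(z),j_2(z))\leq 2$ for each such $z$. It therefore suffices to show that all but finitely many $z\in S$ actually achieve equality.

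Suppose then that $z\in S$ has $\mathrm{tr.deg.}_{\mathbb{Q}}\mathbb{Q}(z,j(z),j_2(z))\leq 1$. Since $z\in\mathbb{H}_2$ forces $j(z)\in\mathbb{H}$, we may apply MSC to the pair $(z,j(z))\in\mathbb{H}^2$; this would give transcendence degree at least $2$, a contradiction, unless $z$ and $j(z)$ lie in a common $\mathrm{GL}_2^+(\mathbb{Q})$-orbit. In the latter case there exists a primitive $g=\left(\begin{smallmatrix}a&b\\c&d\end{smallmatrix}\right)\in M_2(\mathbb{Z})$ with $\det g=N\geq 1$ such that $j(z)=g\cdot z$, and then the defining property $\Phi_N(j(u),j(gu))=0$ of the modular polynomial, applied with $u=z$, yields $\Phi_N(j(z),j_2(z))=0$. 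Hence every bad $z$ satisfies the three polynomial relations
\begin{equation*}
    p(z,j(z),j_2(z))=0,\quad (cz+d)j(z)-(az+b)=0,\quad \Phi_N(j(z),j_2(z))=0,
\end{equation*}
corresponding to three surfaces in $\mathbb{C}^3$. Using the irreducibility of $p$ and the hypotheses $\partial p/\partial X\neq 0$ and $\partial p/\partial Y_2\neq 0$, one checks that no two of these surfaces share a component (for instance, if $p$ were divisible by $(cX+d)Y_1-(aX+b)$, the irreducibility of $p$ would force $p$ to be proportional to this linear form, contradicting $\partial p/\partial Y_2\neq 0$). Hence for each fixed $g$ the common zero locus is finite, so only finitely many $z\in S$ can satisfy $j(z)=g\cdot z$ for any given~$g$.

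The main obstacle is to show that only finitely many primitive $g\in M_2(\mathbb{Z})$ can arise in this way, so that the bad set $B\subseteq S$ is finite and $S\setminus B$ is still infinite. My plan is to establish this via an analogue of the Mantova--Zannier finiteness lemma from the appendix of~\cite{mantova}, which in the exponential case is deduced from Baker's theorem on linear forms in logarithms. In the modular setting the required bound should follow from a combination of height estimates with the Pila--Tsimerman Ax--Schanuel theorem for $j$, exploiting the fact that a hypothetical infinite family of distinct $g$'s would produce a 1-parameter family of ``atypical'' points in $V\cap E_{j,2}$ forbidden by Ax--Schanuel. Alternatively, the unconditional Proposition~\ref{prop:unconditional} may be applied to auxiliary plane curves cut out by $p$ and the Möbius transformations~$g$. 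Once this finiteness of relevant $g$ is in hand, the bad locus $B$ is a finite union of finite sets, hence finite, and infinitely many $z\in S$ give $\mathrm{tr.deg.}_{\mathbb{Q}}\mathbb{Q}(z,j(z),j_2(z))=2$, as required.
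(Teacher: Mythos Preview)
Your outline has the right shape, but there are two genuine gaps, and the paper's route differs from yours in an essential way.

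\textbf{Gap 1: the special cases in MSC.} When you invoke MSC on the pair $(z,j(z))$, the conclusion $\mathrm{tr.deg.}\geq 2$ requires that both points be non-special \emph{and} in distinct $G$-orbits. You treat only the same-orbit failure; you must also rule out $z$ special and $j(z)$ special. The paper handles these via its Lemma~\ref{lem:finite-Js}, producing finite sets $J_1,J_2\subset\mathbb{C}$ (depending only on $p$) such that any such $z$ has $j(z)\in J_1$ or $j_2(z)\in J_2$.

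\textbf{Gap 2: the finiteness of admissible $g$.} This is the crux, and you have not proved it --- you only sketch a plan via Ax--Schanuel or height bounds. The paper obtains the bound by an elementary degree comparison. Suppose $z=g\,j(z)$ with $\mathrm{red}(g)=\left(\begin{smallmatrix}a&b\\c&d\end{smallmatrix}\right)$ and $M=\det(\mathrm{red}(g))$, and set $q(X,Y):=(cX+d)^{m}p(gX,X,Y)$ where $m=\deg_X p$. Then $(j(z),j_2(z))$ lies on both the irreducible curve $V(\Phi_M)$ and on $V(q)$; once $j(z)$ is known to be non-special (Gap~1), this point has transcendence degree~$1$ over $\overline{\mathbb{Q}}$, hence is generic on $V(\Phi_M)$, forcing $V(\Phi_M)\subseteq V(q)$. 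Either $q\equiv 0$, so $\mathrm{red}(g)$ lies in the finite set $D(p_d;G)$ of Lemma~\ref{lem:G-divisors}, or $\Phi_M\mid q$, so $M\leq\deg\Phi_M\leq\deg q$, which is bounded in terms of $p$ alone. In either case $M$ is \emph{a priori} bounded by a constant $N$ depending only on $p$.

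\textbf{Difference in strategy.} Rather than producing an infinite $S$ via Theorem~\ref{th:main3} and then pruning a bad subset, the paper builds the avoidance directly into the system solved by Proposition~\ref{prop:masser}. Having fixed $N$ and $J_1,J_2$ as above, it restricts to the open set where $y_1\notin J_1$ and adjoins to the equations $j(z)=z_1$, $j(z_1)=H(z,z_1)$ an auxiliary equation
\[
\frac{1}{j(w)}=F_N\bigl(z_1,H(z,z_1)\bigr),\qquad F_N(Y_1,Y_2):=\prod_{i=1}^N\Phi_i(Y_1,Y_2)\cdot\prod_{t\in J_2}(Y_2-t).
\]
Since $1/j$ never vanishes, every solution satisfies $F_N(j(z),j_2(z))\neq 0$, which simultaneously forces $j_2(z)\notin J_2$ and $\Phi_M(j(z),j_2(z))\neq 0$ for all $M\leq N$; combined with the degree argument above and $j(z)\notin J_1$, this excludes all three failure modes at once. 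Your subtractive approach is salvageable, but only after supplying exactly the degree argument and the special-point control that the paper provides.
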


The corresponding result for the exponential function, conditional on Schanuel's conjecture, is due to D'Aquino, Fornasiero and Terzo; see \cite[Theorem 4.2]{daquino2}.

We point out that most of the proofs  use  only elementary tools from algebraic geometry, complex analysis, the theory of automorphic functions and class field theory (with the exception of Proposition \ref{prop:gsol} in \S \ref{subs:G_orbit_bounded}), 
while at the same time we take inspiration from ideas found in \cite{daquino2} and \cite{mantova}. It  is then  natural  to  wonder  whether  our  methods    can  be extended and applied to other functions; see e.g.~\cite{pila13} and~\cite{pila-tsimerman} for interesting transcendence results involving derivatives of the $j$ function, $\exp$ and elliptic functions. It would be particularly interesting to know whether these ideas could provide new proofs of known results for $\exp$. Motivated by this,  we show in \S \ref{sec:further} two examples of how one can use the methods of this paper to find solutions to certain equations involving  the functions  $j'$ (the usual derivative of $j$) and $\exp(1/z)$, respectively. At the same time, we believe it is possible to obtain stronger results than the ones presented in this paper with the aid of more sophisticated tools (such as o-minimality).  We plan to pursue the study of this type of problems in more detail in future work. 


\subsection{Structure of the paper}

In \textsection\ref{sec:background}, we introduce some extra notation and provide some background material that is used in the next sections. The modular Schanuel conjecture can be found in \textsection\ref{subsec:msc}. 

In \textsection\ref{sec:excl} we give a couple of simple examples of varieties where the problems we are interested in are easy to study. 

The purpose of \textsection\ref{sec:equations_automorphic} is to prove Proposition \ref{prop:masser}, which shows that certain systems of analytic equations involving meromorphic automorphic functions have solutions. This proposition plays a crucial role in the proofs of all of our main theorems. 

In \textsection\ref{sec:triangular_forms} we introduce and study certain family of affine varieties that have a very simple form. These are used in our proofs of Theorems \ref{th:main1} and \ref{th-main-2}, and also in the proof of Proposition \ref{prop:unconditional} in \textsection\ref{subsec:unconditional_result}. 

The proofs of our main theorems are contained in \textsection\ref{sec:proof_of_thm_1}, \textsection\ref{sec:proof_of_thm_2}, \textsection\ref{sec:proof_of_thm_3} and \textsection\ref{sec:proof_of_thm_4}, following the order of their presentation in the introduction. 

Finally,  in \textsection\ref{sec:further} we adapt the ideas used in  the proof of Proposition \ref{prop:masser} to prove the existence of solutions to certain equations involving  $j'$ and $\exp(1/z)$.


\section{Background material and notation}
\label{sec:background}

Given sets $A,B$ we define $A\setminus B:=\{a\in A:a\not \in B\}$. 
Following the notation used in the introduction, we denote by $\mathbb{H}$ the complex upper-half plane $\{z\in \mathbb{C}:\mathrm{Im}(z)>0\}$. The group $\mathrm{GL}_2^+(\mathbb{R})$ of $2$ by $2$ matrices with coefficients in $\mathbb{R}$ and positive determinant acts on $\mathbb{H}$ via the formula $$gz:=\frac{az+b}{cz+d} \ \text{ for } \ g=\left(\begin{array}{cc}a & b\\ c & d\end{array}\right) \text{ in }\mathrm{GL}^+_{2}(\mathbb{R}).$$ This action can be extended to a continuous action of $\mathrm{GL}_2^+(\mathbb{R})$ on the Riemann sphere $\widehat{\mathbb{C}}:=\mathbb{C}\cup \{\infty\}$. Given a subring $R$ of $ \mathbb{R}$ we define $M_2^+(R)$ as the set of $2$ by $2$ matrices with positive determinant and coefficients in $R$. We put $$G:=\mathrm{GL}_2^+(\mathbb{Q})= M_2^+(\mathbb{Q}),$$ which is a subgroup of $\mathrm{GL}_2^+(\mathbb{R})$. The modular group is defined as $$\Gamma:=\mathrm{SL}_2(\mathbb{Z})=\{g\in M_2^+(\mathbb{Z}):\det(g)=1\}.$$ The modular $j$ function was defined in the introduction as the unique holomorphic function $j:\mathbb{H}\to \mathbb{C}$ that satisfies $$j(gz)=j(z) \text{ for every }g \text{ in } \Gamma \text{ and every }z\text{ in }\mathbb{H},$$ and has a Fourier expansion of the form \eqref{eq:j-fourier-expansion}. It induces an analytic isomorphism of Riemann surfaces $\Gamma \backslash \mathbb{H}\simeq \mathbb{C}$. The quotient space $Y_{\Gamma}=\Gamma\backslash \mathbb{H}$ is known to be a moduli space for complex tori, or equivalently, elliptic curves over $\mathbb{C}$. If $\Gamma z$ is a point in $Y_{\Gamma}$ and $E_z$ denotes an elliptic curve in the corresponding isomorphism class, then $j(z)$ is simply the $j$-invariant of the curve $E_z$. 

Given a point $\mathbf{z}=(z_1,\ldots,z_n)$ in $\mathbb{H}^n$, we simply write $j(\mathbf{z})$ instead of $(j(z_1),\ldots,j(z_n))$. For a positive integer $n$, we define $j_n$ inductively by $$j_1=j \text{ and }j_{n+1}=j\circ j_n \text{ for }n\geq 1.$$ The domain of definition of $j_n$, denoted by $\mathbb{H}_n$, is also defined inductively by $$\mathbb{H}_{1}=\mathbb{H} \text{ and }\mathbb{H}_{n+1}=\{z\in \mathbb{H}_n:j(z)\in \mathbb{H}\} \text{ for }n\geq 1.$$

In this paper, by an open subset of $\mathbb{C}^n$ we mean an open subset with respect to the Euclidean (i.e.~analytic) topology. This will be emphasized when needed. Given a field $K$, terms like Zariski open, Zariski closure and Zariski dense, referring to subsets of $K^n$, are meant to be understood as defined with respect to the Zariski topology. 

\subsection{Modular polynomials}\label{sec:modular_polynomials} Let $\left\{\Phi_{N}(X,Y)\right\}_{N=1}^{\infty}\subseteq\mathbb{Z}[X,Y]$ denote the family of \emph{modular polynomials} associated to $j$ (see \cite[Chapter 5, Section 2]{lang} for the definition and main properties of this family). We recall that $\Phi_{N}(X,Y)$ is irreducible in $\mathbb{C}[X,Y]$, $\Phi_{1}(X,Y) = X-Y$, and for $N\geq 2$, $\Phi_{N}(X,Y)$ is symmetric of total degree $\geq 2N$. Also, the action of $G$ on $\mathbb{H}$ can be traced by using modular polynomials in the following way: for every $g$ in $G$ we define $\mathrm{red}(g)$ as the unique matrix of the form $rg$ with $r\in \mathbb{Q},r>0$ such that the entries of $rg$ are all integers and relatively prime. Then, for every $z_1,z_2$ in $\mathbb{H}$ the following statements are equivalent: 
\begin{itemize}
    \item[(M1):] $\Phi_{N}(j(z_1),j(z_2)) = 0$,
    \item[(M2):] $gz_1=z_2$ for some $g$ in $G$ with $\det\left(\mathrm{red}(g)\right) = N$.
\end{itemize}

\subsection{Special and ordinary points}

A point $z$ in $\mathbb{H}$ is said to be \emph{special} if there is a matrix $g$ in $G$ such that $z$ is the unique fixed point of $g$ in $\mathbb{H}$. This is equivalent to saying that $z$ satisfies a non trivial quadratic equation with integer coefficients. A theorem of Schneider (\cite{schneider}) says that $\mathrm{tr.deg.}_{\mathbb{Q}}(\mathbb{Q}(z,j(z))) = 0$ if and only if $z$ is special. The special points of $\mathbb{H}$ are exactly those points for which the corresponding elliptic curve (more precisely, any representative in the corresponding isomorphism class of elliptic curves) has complex multiplication.  For this reason, special points are also known as \emph{CM points} in the literature.

Special points are deeply linked to class field theory for imaginary quadratic fields. The following result is a well known application of that relation. It also uses a classical result due to Hecke, Deuring, Mordell and Heilbronn stating that for every positive integer $h$ there are only finitely many maximal quadratic imaginary orders of class number $h$. This result extends to non-maximal quadratic imaginary orders by \cite[Theorem 7 in Chapter 8, \S 1]{lang}. 


\begin{lem}\label{lem:class_number}
Let $M$ be a positive integer. Then, the set of $\Gamma$-orbits of special points $z$ in $\mathbb{H}$ for which the degree $[\mathbb{Q}(z,j(z)):\mathbb{Q}(z)]$ is bounded above by $M$ is finite.
\end{lem}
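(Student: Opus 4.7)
The plan is to attach to each $\Gamma$-orbit of special points a well-defined imaginary quadratic order, identify the degree $[\mathbb{Q}(z,j(z)):\mathbb{Q}(z)]$ with the class number of that order, and conclude by combining the finiteness theorem cited just before the lemma with the fact that each order supports only finitely many $\Gamma$-orbits of special points.

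First, for a special point $z \in \mathbb{H}$, the field $K := \mathbb{Q}(z)$ is imaginary quadratic, and the lattice $\Lambda_z := \mathbb{Z} + \mathbb{Z}z$ is a proper fractional ideal of the order
\[
\mathcal{O}_z := \{\alpha \in K : \alpha\Lambda_z \subseteq \Lambda_z\},
\]
which is also the endomorphism ring of the elliptic curve $E_z = \mathbb{C}/\Lambda_z$. Since the action of $\Gamma$ on $z$ corresponds to a change of $\mathbb{Z}$-basis of $\Lambda_z$, the order $\mathcal{O}_z$ depends only on the orbit $\Gamma z$. I would then invoke the main theorem of complex multiplication for elliptic curves (see e.g.~\cite{lang}), which says that $K(j(z))$ is the ring class field of $K$ attached to $\mathcal{O}_z$; in particular
\[
[\mathbb{Q}(z,j(z)):\mathbb{Q}(z)] = [K(j(z)):K] = h(\mathcal{O}_z),
\]
so the hypothesis of the lemma becomes $h(\mathcal{O}_z) \leq M$.

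Finally, by the Hecke--Deuring--Mordell--Heilbronn theorem together with its extension to non-maximal orders (both cited in the paragraph preceding the lemma), only finitely many imaginary quadratic orders $\mathcal{O}$ satisfy $h(\mathcal{O}) \leq M$. For each such order, the $\Gamma$-orbits of special points $z$ with $\mathcal{O}_z = \mathcal{O}$ correspond bijectively to the proper ideal classes of $\mathcal{O}$ via $\Gamma z \mapsto [\Lambda_z]$, hence there are exactly $h(\mathcal{O}) \leq M$ of them. Combining these two finiteness statements proves the lemma. The step requiring the most care -- rather than being a genuine obstacle -- is invoking the correct version of complex multiplication, namely the one valid for possibly non-maximal orders that gives $[K(j(z)):K]$ as the class number of $\mathcal{O}_z$ itself, not merely of the maximal order $\mathcal{O}_K$.
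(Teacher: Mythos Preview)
Your proof is correct and follows essentially the same approach as the paper's own proof: both identify $[\mathbb{Q}(z,j(z)):\mathbb{Q}(z)]$ with the class number $h(\mathcal{O}_z)$ of the associated imaginary quadratic order via the theory of complex multiplication, invoke the finiteness of orders with bounded class number, and use that each order supports only finitely many $\Gamma$-orbits of special points. Your version is slightly more explicit in describing the fibers of the map $\Gamma z \mapsto \mathcal{O}_z$ as proper ideal classes, whereas the paper simply notes the map is finite-to-one, but the argument is the same.
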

\begin{proof}
If $z$ is a special point in $\mathbb{H}$ and $E_z$ denotes an elliptic curve in the corresponding isomorphism class, then the ring of endomorphisms of $E_z$ defined over $\mathbb{C}$ is isomorphic to an imaginary quadratic order $\mathcal{O}_z$. It is known that 
the field $\mathbb{Q}(z,j(z))$ is the ring class field of $\mathcal{O}_z$, and the correspondence $z\mapsto \mathcal{O}_z$ induces a finite to one surjective map between the collection of all $\Gamma$-orbits of special points in $\mathbb{H}$ and the collection of all (isomorphism classes of) imaginary quadratic orders. Under this map, the class number $h(\mathcal{O}_z)$ of $\mathcal{O}_z$ equals $[\mathbb{Q}(z,j(z)):\mathbb{Q}(z)]$. 
As mentioned above, for every positive integer $h$ there are only finitely many imaginary quadratic orders with class number $h$.  Therefore, if $[\mathbb{Q}(z,j(z)):\mathbb{Q}(z)]$ is bounded above by $M$, then there is a finite set $S_M\subset \mathbb{H}$ of special points, depending only on $M$, such that $z\in \Gamma \cdot S_M=\{g z_0:g\in \Gamma,z_0\in S_M\}$. This proves the lemma.
\end{proof}

We extend the definition of special point to higher dimensions as follows. We say that a point $\mathbf{z}$ in $\mathbb{H}^{n}$ is \emph{special} if every coordinate of $\mathbf{z}$ is special. On the other hand, we say that $\mathbf{z}$ is \emph{ordinary} if no coordinate of $\mathbf{z}$ is special. 



\subsection{The modular Schanuel conjecture}
\label{subsec:msc}
We now state an important conjecture which, just like Schanuel's conjecture for $\exp$, is a special case of the generalised period conjecture of Grothendieck--Andr\'e (see \cite[\textsection 23.4.4]{andre}, \cite[\textsection 1 Conjecture modulaire]{bertolin}, and \cite[Conjecture 8.3]{pila2}). 

\begin{conj}[Modular Schanuel's Conjecture]
\label{conj:msc}
If $z_{1},\ldots,z_{n}$ in $\mathbb{H}$ are non-special points in distinct $G$-orbits, then:
\begin{equation}
    \mathrm{tr.deg.}_{\mathbb{Q}}(\mathbb{Q}\left(z_{1},\ldots,z_{n},j(z_{1}),\ldots,j(z_{n})\right))\geq n.
\end{equation}
\end{conj}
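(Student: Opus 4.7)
The Modular Schanuel Conjecture is a well-known open problem, comparable in depth to Schanuel's classical conjecture for $\exp$; any genuine proof would be a major transcendence result. What I can offer here is a description of the natural strategy and an indication of where it runs into the essential open difficulty. Notably, this statement is recorded in the paper as a conjecture that is used only as an \emph{hypothesis} in Theorems \ref{th-main-2} and \ref{thm:main4}, so in the present excerpt no proof is being attempted.

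The plan is to argue by contradiction, modelled on the way the Pila--Tsimerman Ax--Schanuel theorem for $j$ handles the functional version. Suppose $z_{1},\ldots,z_{n}\in \mathbb{H}$ are non-special and lie in distinct $G$-orbits, yet $\mathrm{tr.deg.}_{\mathbb{Q}}\mathbb{Q}(\mathbf{z},j(\mathbf{z})) \leq n-1$. Then $(\mathbf{z},j(\mathbf{z}))$ sits on some irreducible algebraic variety $W\subseteq \mathbb{C}^{2n}$ of dimension at most $n-1$ defined over $\overline{\mathbb{Q}}$. The strategy would be to ``thicken'' this arithmetic datum into a geometric one: produce a complex analytic arc (or higher dimensional family) in $\mathbb{H}^{n}$ through $\mathbf{z}$ on which $(\tilde{\mathbf{z}},j(\tilde{\mathbf{z}}))$ stays inside $W$, with the image of the arc Zariski dense in an algebraic set of suitably small dimension. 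A correct dimension count would then allow the Ax--Schanuel theorem for $j$ to be applied, forcing a modular relation on the moving coordinates, i.e.~some coordinate becomes special or two coordinates lie in the same $G$-orbit. Specialising at the base point and using the hypothesis that the $z_{i}$ are non-special and $G$-inequivalent would yield the desired contradiction.

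The hard part, and the reason the conjecture is open, is precisely this deformation step. Ax--Schanuel supplies transcendence information about germs of analytic maps into algebraic bases; building from the single arithmetic point $(\mathbf{z},j(\mathbf{z}))$ a genuine algebraic family whose modular behaviour at the base point can be read off would require a Kummer-type theory for the $j$ function together with some control over how algebraic relations among $j$-values spread in families. Neither ingredient is available: all known unconditional transcendence results on special values of $j$ (the modular analogues of Lindemann--Weierstrass, Nesterenko's theorem on $\pi, e^{\pi}, \Gamma(1/4)$, the result of Bertrand--Zudilin, etc.) only handle very restricted configurations. A proof of Conjecture \ref{conj:msc} in full generality therefore appears to demand a genuinely new arithmetic-to-analytic bridge beyond what either the Pila--Zannier/o-minimality toolkit or the classical transcendence machinery currently provides.
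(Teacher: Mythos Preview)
Your assessment is correct: Conjecture~\ref{conj:msc} is stated in the paper precisely as a conjecture, with no proof given or attempted. The paper invokes it only as a hypothesis in the conditional results (Theorems~\ref{th-main-2} and~\ref{thm:main4}, Proposition~\ref{prop:mscspecial}, Corollary~\ref{cor:mscspecial_in_triangular_form}, and Lemma~\ref{lem:findim}), and the surrounding discussion merely situates it as a special case of the Grothendieck--Andr\'e period conjecture and notes the Ax--Schanuel theorem for $j$ as supporting evidence. There is therefore no ``paper's own proof'' to compare against, and your decision not to attempt a proof is the right one.

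Your sketch of why the conjecture resists current methods is reasonable and accurately identifies the gap: Ax--Schanuel for $j$ is a functional statement about differential fields (or analytic germs), and there is no known mechanism to pass from a single arithmetic point satisfying an unexpected algebraic relation to a positive-dimensional family on which Ax--Schanuel could bite. That is exactly the obstruction, and it is why the paper treats MSC as an assumption rather than a result.
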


For the rest of the paper we will refer to Conjecture \ref{conj:msc} as MSC for short. The merit of this conjecture relies not only on it coming from the generalised period conjecture, but there are also results like the Ax--Schanuel theorem for $j$ (see \cite[Theorem 1.3]{pila-tsimerman}) saying that an inequality in the spirit of MSC holds on differential fields that have a $j$-function. 

We now give an alternative formulation of MSC that will be used in \textsection\ref{sec:triangular_forms} and \textsection\ref{sec:proof_of_thm_2}. Given a subset  $A$ of $\mathbb{H}$, we define $\dim_{G}(A)$ as the number of distinct $G$-orbits in $$G\cdot A=\{g a:g\in G,a\in A\}$$ (this number can be infinite)\footnote{Note that $\dim_{G}(A)$ is the dimension of the set $A$ with respect to the pregeometry $(\mathbb{H},\mathrm{cl})$ where the closure map is defined as $\mathrm{cl}(A):=G\cdot A$.}. Equivalently, $\dim_G(A)$ is the cardinality of the quotient set $G\backslash (G\cdot A)$. Given another subset $C\subseteq\mathbb{C}$, we define $\dim_{G}(A|C)$ 
as the number of distinct $G$-orbits in $(G\cdot A)\setminus (G\cdot C)$. Let $\Sigma\subset\mathbb{H}$ be the set of all special points. Note that, by Schneider's theorem and the equivalence between (M1) and (M2) in \textsection\ref{sec:modular_polynomials}, MSC is equivalent to the following statement: for any $z_{1},\ldots,z_{n}$ in $\mathbb{H}$ we have 
\begin{equation*}
    \mathrm{tr.deg.}_{\mathbb{Q}}(\mathbb{Q}\left(z_{1},\ldots,z_{n},j(z_{1}),\ldots,j(z_{n})\right))\geq \dim_{G}\left(\{z_{1},\ldots,z_{n}\}|\Sigma\right).
\end{equation*}

\subsection{Generic points}

Given a subfield $K$ of $\mathbb{C}$ and a point $\mathbf{x}$ in $\mathbb{C}^n$, we denote by $K[\mathbf{x}]$ and $K(\mathbf{x})$ the subring and the subfied of $\mathbb{C}$, respectively, generated by $K$ and the coordinates of $\mathbf{x}$. If $\mathbf{x}=(x_1,\ldots,x_n)$ then we simply write $K[\mathbf{x}]=K[x_1,\ldots,x_n]$ and $K(\mathbf{x})=K(x_1,\ldots,x_n)$. We also denote by $\overline{K}$ the algebraic closure of $K$ in $\mathbb{C}$. Given a collection $S$ of polynomials in $n$ variables and complex coefficients, we denote by $V(S)$ the affine subvariety of $\mathbb{C}^n$ defined as the zero locus of the polynomials in $S$. If $S$ is the finite set $\{p_1,\ldots,p_m\}$, then we write $V(S)=V(p_1,\ldots,p_m)$.

Let $V$ be an algebraic subvariety of $\mathbb{C}^n$ of dimension $d$ defined over a subfield $K$ of $\mathbb{C}$. A point $\mathbf{x}$ in $V$ is called \emph{generic over $K$} if $\mathrm{tr.deg.}_{K}(K(\mathbf{x}))=d$. For later use, we now recall two well known results from algebraic geometry. We include proofs for the reader's convenience.

\begin{lem}\label{lem:generic}
Let $K$ be an algebraically closed field and let $L\supseteq K$ be a field extension. Assume that $V$ and $W$ are algebraic subvarieties of $L^n$ defined over $K$ such that $V\cap K^n$ is irreducible and of dimension $d$. Moreover, assume that there exists a point $\mathbf{x}$ in $L^n$ such that $\mathbf{x}\in V\cap W$ and $\mathrm{tr.deg.}_{K}(K(\mathbf{x}))=d$. Then, we have $V \subseteq W$.
\end{lem}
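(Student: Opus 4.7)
The plan is to translate the geometric hypothesis into a statement about prime ideals of $K[X_1,\ldots,X_n]$ and then use the transcendence degree hypothesis to pin down the relevant ideal uniquely.

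First, I let $\mathfrak{p}\subseteq K[X_1,\ldots,X_n]$ denote the vanishing ideal of $V\cap K^n$. Since $V\cap K^n$ is irreducible of dimension $d$ and $K$ is algebraically closed, $\mathfrak{p}$ is prime with $\dim K[X_1,\ldots,X_n]/\mathfrak{p}=d$. Because $V$ is defined over $K$, I can choose $f_1,\ldots,f_m\in K[X_1,\ldots,X_n]$ with $V=V(f_1,\ldots,f_m)$ inside $L^n$; then $V\cap K^n$ is the zero locus of the same polynomials over $K$, so the Hilbert Nullstellensatz in the algebraically closed field $K$ gives $\mathfrak{p}=\sqrt{(f_1,\ldots,f_m)}$ in $K[X_1,\ldots,X_n]$. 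Consequently every $f\in\mathfrak{p}$ vanishes not only on $V\cap K^n$ but on all of $V\subseteq L^n$, because some power of $f$ is an $L[X]$-linear combination of the $f_i$, each of which vanishes on $V$ by construction.

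Next, I would consider the evaluation homomorphism $\phi\colon K[X_1,\ldots,X_n]\to L$ defined by $X_i\mapsto x_i$, and let $\mathfrak{q}=\ker\phi$. This is a prime ideal (since $L$ is a field) and $K[X_1,\ldots,X_n]/\mathfrak{q}\cong K[\mathbf{x}]$. The hypothesis $\mathbf{x}\in V$ together with the previous step forces $\mathfrak{p}\subseteq\mathfrak{q}$. The classical identity $\dim K[\mathbf{x}]=\mathrm{tr.deg.}_K K(\mathbf{x})$ for finitely generated $K$-algebra domains, combined with the hypothesis $\mathrm{tr.deg.}_K K(\mathbf{x})=d$, yields $\dim K[X_1,\ldots,X_n]/\mathfrak{q}=d=\dim K[X_1,\ldots,X_n]/\mathfrak{p}$. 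Two nested prime ideals in a Noetherian domain whose quotients have the same Krull dimension must coincide, so $\mathfrak{p}=\mathfrak{q}$.

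To finish, I write $W=V(g_1,\ldots,g_s)$ with $g_i\in K[X_1,\ldots,X_n]$. The assumption $\mathbf{x}\in W$ gives $g_i(\mathbf{x})=0$, so $g_i\in\mathfrak{q}=\mathfrak{p}$; by the first step, each $g_i$ vanishes on all of $V$, giving $V\subseteq W$. The one delicate point is the passage from "vanishing on the $K$-points $V\cap K^n$" to "vanishing on the full $L$-variety $V$", which is precisely where algebraic closedness of $K$ enters through the Nullstellensatz identification $\mathfrak{p}=\sqrt{(f_1,\ldots,f_m)}$; once that is in place, the dimension count makes $\mathfrak{q}=\mathfrak{p}$ automatic and the conclusion follows immediately.
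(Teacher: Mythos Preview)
Your proof is correct and follows essentially the same approach as the paper's. Both arguments identify the prime ideal $\mathfrak{q}$ of polynomials in $K[X_1,\ldots,X_n]$ vanishing at $\mathbf{x}$, use the transcendence-degree hypothesis to match its dimension with that of the vanishing ideal $\mathfrak{p}$ of $V\cap K^n$, conclude $\mathfrak{p}=\mathfrak{q}$, and then invoke the Nullstellensatz (via algebraic closedness of $K$) to pass from $V\cap K^n\subseteq W\cap K^n$ to $V\subseteq W$; the paper phrases the dimension step in terms of the variety $Z=V(\mathfrak{q})$ rather than ideals, but the content is identical.
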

\begin{proof}
Define $I$ as the set of polynomials in $K[X_1,\ldots,X_n]$ vanishing on $\mathbf{x}$. Clearly $I$ is a prime ideal of $K[X_1,\ldots,X_n]$. Let $Z$ denote the corresponding irreducible algebraic set in $K^n$. We have that $K[X_1,\ldots,X_n]/I$ is isomorphic to $K[\mathbf{x}]$. Since the field of fractions of $K[\mathbf{x}]$ has transcendence degree over $K$ equal to $d$, we conclude that $Z$  has dimension $d$ over $K$. By hypothesis, $V$ is defined by certain polynomials $p_1,\ldots,p_m$ in $K[X_1,\ldots,X_n]$. Since $\mathbf{x}\in V$, we have $p_1,\ldots,p_m\in I$. This implies that $Z\subseteq V\cap K^n$. Similarly, $Z\subseteq W\cap K^n$. Since $V\cap K^n$ is irreducible over $K$ and of the same dimension than $Z$, we have $Z=V\cap K^n$ (see, e.g.~\cite[Theorem 1.19]{Shafarevich1}). We conclude that $V\cap K^n\subseteq W\cap K^n$. Since $K$ is algebraically closed, we have that every polynomial in $K[X_1,\ldots,X_n]$ defining $W$ is contained in the radical of the ideal generated by $p_1,\ldots,p_m$. This implies that $V\subseteq W$ and completes the proof of the lemma.
\end{proof} 


\begin{cor}\label{cor:generic_contention}
Let $V$, $W$ be affine varieties in $\mathbb{C}^{n}$ with $V$ irreducible of dimension $d$. Let $\pi:\mathbb{C}^{n}\to \mathbb{C}^d$ be the projection map over a fixed choice of $d$ different coordinates, so that $\pi(x_1,\ldots,x_n)=(x_{i_1},\ldots,x_{i_d})$ with $\{i_1,\ldots,i_d\}$ a subset of cardinality $d$ of $\{1,\ldots,n\}$. If $\pi(V\cap W)$ contains a non-empty Euclidean open subset of $\mathbb{C}^d$, then $V\subseteq W$.
\end{cor}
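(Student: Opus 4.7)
The plan is to reduce the statement to Lemma \ref{lem:generic} by exhibiting a point of $V\cap W$ whose transcendence degree over a suitable base field equals $\dim V = d$. The Euclidean openness hypothesis is used precisely to produce such a generic point; after that, the geometric containment $V\subseteq W$ follows from the previous lemma mechanically.

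First, I would fix a countable algebraically closed subfield $K\subset\mathbb{C}$ over which both $V$ and $W$ are defined, for instance by taking the algebraic closure in $\mathbb{C}$ of the finitely generated field obtained by adjoining the coefficients of some fixed defining polynomials for $V$ and $W$. Since $V$ is irreducible as a complex variety and defined over $K$, the ideal $I_K(V)=I(V)\cap K[X_1,\dots,X_n]$ is a prime ideal of $K[X_1,\dots,X_n]$, and $V\cap K^n$ equals the zero locus of $I_K(V)$; standard dimension theory (dimension is preserved under the flat extension $K\to\mathbb{C}$) ensures $V\cap K^n$ is irreducible over $K$ of dimension $d$. This is the absolute-irreducibility descent step, and it is the only point in the argument that needs a moment of care.

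Next, I would locate a good target point. Let $U\subseteq\mathbb{C}^d$ be a non-empty Euclidean open subset with $U\subseteq\pi(V\cap W)$. Because $K$ is countable, the set of points of $\mathbb{C}^d$ whose coordinates are algebraically dependent over $K$ is a countable union of proper Zariski-closed subsets of $\mathbb{C}^d$, each of which is nowhere dense in the Euclidean topology; by the Baire category theorem this union has empty interior. Therefore I can select $\mathbf{y}\in U$ whose $d$ coordinates are algebraically independent over $K$, and by the hypothesis there exists $\mathbf{x}\in V\cap W$ with $\pi(\mathbf{x})=\mathbf{y}$.

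Finally, I would compute transcendence degrees and invoke Lemma \ref{lem:generic}. Since the coordinates of $\mathbf{y}$ are a subset of those of $\mathbf{x}$, I get $\mathrm{tr.deg.}_K K(\mathbf{x})\geq \mathrm{tr.deg.}_K K(\mathbf{y})=d$; conversely, $\mathbf{x}\in V$ and $\dim V=d$ force $\mathrm{tr.deg.}_K K(\mathbf{x})\leq d$, so equality holds. Applying Lemma \ref{lem:generic} with $L=\mathbb{C}$ to the point $\mathbf{x}\in V\cap W$ and the $K$-variety $V\cap K^n$ then yields $V\subseteq W$. The only conceptual subtlety in the whole argument is the descent step in the first paragraph; everything else is an exercise in Baire category plus direct bookkeeping of transcendence degrees.
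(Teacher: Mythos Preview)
Your proof is correct and follows essentially the same route as the paper's: fix the algebraic closure $K$ of a finitely generated field of definition, use the Euclidean open set to find a point $\mathbf{y}\in U$ with $\mathrm{tr.deg.}_K K(\mathbf{y})=d$, lift it to $\mathbf{x}\in V\cap W$, and apply Lemma~\ref{lem:generic}. The only cosmetic differences are that the paper justifies the existence of $\mathbf{y}$ by simply invoking $\mathrm{tr.deg.}_K(\mathbb{C})=\infty$ rather than Baire category, and it leaves the irreducibility of $V\cap K^n$ implicit, whereas you spell out the descent step explicitly.
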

\begin{proof}
Let $K_0$ be a finitely generated subfield of $\mathbb{C}$ over which $V$ and $W$ are defined. Put $K=\overline{K_0}$ and choose a non-empty Euclidean open subset $U$ of $\mathbb{C}^d$ contained in $\pi(V\cap W)$. Since $\mathrm{tr.deg.}_{K}(\mathbb{C})$ is infinite, we can find $\mathbf{x}$ in $U$ with  $\mathrm{tr.deg.}_{K}(K(\mathbf{x}))=d$. Let $\mathbf{y}$ be any point in $V\cap W$ with $\pi(\mathbf{y})=\mathbf{x}$. We have $$d=\mathrm{tr.deg.}_{K}(K(\mathbf{x}))\leq \mathrm{tr.deg.}_{K}(K(\mathbf{y}))\leq \mathrm{dim}_K(V)=\mathrm{dim}_{\mathbb{C}}(V)=d,$$ hence $\mathbf{y}$ is generic in $V$ over $K$ and by Lemma \ref{lem:generic} we get $V\subseteq W$. This proves the result.
\end{proof}

\section{Some simple examples}
\label{sec:excl}
As explained in the introduction, the first  problem that we are interested in is to find conditions that ensure that if an irreducible algebraic variety $V\subseteq\mathbb{C}^{2n}$ satisfies them, then $E_{j}^{n}\cap V$ is non-empty. 
Here we look at some simple examples of varieties where this problem is easy to analize.

\begin{ex}\label{ex:1} Let $V\subset \mathbb{C}^{2}$ be the vertical line defined by the equation $X = r$, for some $r$ in $\mathbb{C}$. We have $E_j^1\cap V=\emptyset$ if $r\in \mathbb{C}\setminus \mathbb{H}$, while $E_j^1\cap V=\{(r,j(r))\}$ if $r\in \mathbb{H}$. 
More generally, no variety $V\subset\mathbb{C}^{2n}$ contained in a hyperplane defined by an equation of the form $\left\{X_{i} = r\right\}$ for some $i$ in $\{1,\ldots,n\}$ and $r$ in $ \mathbb{C}\setminus \mathbb{H}$ can  intersect $E_{j}^{n}$.
\end{ex}

\begin{ex} Let $V\subset \mathbb{C}^{2}$ be the horizontal line defined by the equation $Y = r$, for some $r\in\mathbb{C}$. Since $j(\mathbb{H})=\mathbb{C}$, we can choose $z_0$ in $\mathbb{H}$ with $j(z_0)=r$. Then, we have $$E_j^1\cap V=\{(g z_0,j(z_0)):g \in \Gamma\}.$$ 
\end{ex}

\begin{ex}\label{ex:2} Choose $g=\left(\begin{array}{cc}a & b \\ c & d\end{array}\right)$ in $ G$, and put $N:=\det(\mathrm{red}(g))$ (see \textsection \ref{sec:modular_polynomials} for notation). Let $V\subset\mathbb{C}^{4}$ be the affine variety defined as
$$V=\left\{(z_1,z_2,w_1,w_2)\in \mathbb{C}^4:z_{1}(cz_2+d) = az_{2}+b , \Phi_{N}(w_{1},w_{2})+1=0\right\}.$$ 
By the equivalence between (M1) and (M2) in \textsection\ref{sec:modular_polynomials}, $V$ cannot intersect $E_{j}^{2}$. 
\end{ex}

Observe that the cases of Examples \ref{ex:1} and \ref{ex:2}, which fail to have points in $E_{j}^{n}$, are all of algebraic varieties for which the projection map onto the first set of coordinates is not dominant (which is one of the conditions required in Theorem \ref{th:main1}). However, this is not a necessary condition. As we already saw in Example \ref{ex:1}, when $r\in\mathbb{H}$ we have  $V\cap E_{j}^{1}\neq\emptyset$ despite the fact that the projection of $V$ onto the first coordinate is not Zariski dense. 






\section{On certain systems of equations involving automorphic functions}\label{sec:equations_automorphic}

In this section we prove that certain systems of analytic equations involving meromorphic automorphic functions have infinitely many solutions. This is an automorphic analogue of a result of Brownawell and Masser mentioned in the introduction. By a meromorhic automorphic function we mean a meromorphic function $f(z)$ on $\mathbb{H}$ that satisfies the following conditions:
\begin{enumerate}
    \item[(A1)] There exists a Fuchsian group of the first kind $\Gamma_0\subset \mathrm{SL}_2(\mathbb{R})$ such that $f(z)$ is automorphic for $\Gamma_0$, namely, $f(\gamma z)=f(z)$ for every $\gamma$ in $ \Gamma_0$ and every $z$ in $ \mathbb{H}$.
    \item[(A2)] $f(z)$ is meromorphic at each cusp of $\Gamma_0$.
\end{enumerate}
For a precise definition of (A2), we refer the reader to \cite[\S 2.1]{miyake}.

A non-empty subset $U$ of $\mathbb{C}^n$ is called a \emph{domain} if it is open and connected (with respect to the Euclidean topology).

\begin{prop}
\label{prop:masser}
Let $f_{1},\ldots,f_{n}$ be non-constant
meromorphic automorphic functions, let $U\subseteq\mathbb{C}^{n}$ be a 
domain such that $U \cap\mathbb{R}^{n}\neq\emptyset$, and  let $p_{1},\ldots,p_{n}:U\rightarrow\mathbb{C}$ be 
holomorphic functions. Assume that the following condition is  satisfied for every $i$ in $\{1,\ldots,n\}$: if $p_i$ is a constant function, then $f_i$ attains the value of $p_i$ in $\mathbb{H}$.
Then, the system of equations
\begin{equation}
\label{eq:masser}
\begin{array}{ccc}
    f_{1}(z_{1}) &=& p_{1}(z_{1},\ldots,z_{n}), \\
 &\vdots& \\
f_{n}(z_{n}) &=& p_{n}(z_{1},\ldots,z_{n}),
\end{array}
\end{equation}
has infinitely many solutions in $U\cap \mathbb{H}^{n}$.
\end{prop}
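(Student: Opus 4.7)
The plan is to exploit the automorphy of each $f_i$ together with the fact that a Fuchsian group of the first kind has orbits accumulating at every point of $\mathbb{R}\cup\{\infty\}$. Concretely, I will transfer the system, by a change of variables along $\Gamma_0^{(i)}$-orbits, to a $k$-parametrised perturbation of a fixed system on a compact region of $\mathbb{H}^n$, and apply the implicit function theorem uniformly in $k$.

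\emph{Step 1 (generic base point).} Each $f_i$ extends to a non-constant meromorphic function $X_i\to\mathbb{P}^1$ on the compact Riemann surface $X_i := \overline{\Gamma_0^{(i)}\backslash\mathbb{H}}$, so both $E_i := \mathbb{C}\setminus f_i(\mathbb{H})$ (values attained only at cusps) and the set $B_i\subset\mathbb{C}$ of critical values of $f_i$ are finite. Starting from an arbitrary $\mathbf{x}_0\in U\cap\mathbb{R}^n$ and using that $p_i^{-1}(E_i\cup B_i)$ is a proper complex-analytic subset of $U$ whenever $p_i$ is non-constant, while the standing hypothesis forces $c_i\notin E_i$ whenever $p_i$ is constant, I perturb $\mathbf{x}_0$ slightly within $U\cap\mathbb{R}^n$ so that $c_i := p_i(\mathbf{x}_0)\in f_i(\mathbb{H})\setminus B_i$ for every $i$. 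Pick $w_0^{(i)}\in \mathbb{H}$ with $f_i(w_0^{(i)}) = c_i$ and $f_i'(w_0^{(i)})\neq 0$. Since $\Gamma_0^{(i)}$ is of the first kind, its limit set is all of $\mathbb{R}\cup\{\infty\}$ and the orbit $\Gamma_0^{(i)}\cdot w_0^{(i)}$ accumulates at $x_i^0$, so there exist $\gamma_k^{(i)}\in \Gamma_0^{(i)}$ with $w_k^{(i)} := \gamma_k^{(i)}w_0^{(i)}\to x_i^0$ as $k\to\infty$.

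\emph{Step 2 (reformulation and implicit function theorem).} The substitution $z_i = \gamma_k^{(i)}\zeta_i$, combined with $f_i\circ\gamma_k^{(i)} = f_i$, rewrites the system as
\begin{equation*}
f_i(\zeta_i) = \tilde p_i^{(k)}(\zeta_1,\ldots,\zeta_n),\qquad i=1,\ldots,n,
\end{equation*}
where $\tilde p_i^{(k)}(\zeta_1,\ldots,\zeta_n) := p_i(\gamma_k^{(1)}\zeta_1,\ldots,\gamma_k^{(n)}\zeta_n)$. At the $k$-independent point $\mathbf{w}_0 := (w_0^{(1)},\ldots,w_0^{(n)})$ one has $\tilde p_i^{(k)}(\mathbf{w}_0) = p_i(\mathbf{w}_k)\to c_i$, and the chain rule together with the classical identity $|cw+d|^{-2} = \mathrm{Im}(\gamma w)/\mathrm{Im}(w)$ for $\gamma = \left(\begin{smallmatrix}a & b\\ c & d\end{smallmatrix}\right)\in\mathrm{SL}_2(\mathbb{R})$ shows that every partial $\partial\tilde p_i^{(k)}/\partial\zeta_j(\mathbf{w}_0)$ is bounded in modulus by a constant multiple of $\mathrm{Im}(w_k^{(j)})\to 0$. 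Hence the holomorphic map $\Phi_k(\zeta) := (f_i(\zeta_i) - \tilde p_i^{(k)}(\zeta))_{i=1}^n$ satisfies $\Phi_k(\mathbf{w}_0)\to 0$ and $D\Phi_k(\mathbf{w}_0)\to \mathrm{diag}(f_1'(w_0^{(1)}),\ldots,f_n'(w_0^{(n)}))$, an invertible matrix. A quantitative implicit function / Newton--Kantorovich argument, whose hypothesis $\|\Phi_k(\mathbf{w}_0)\|\cdot\|D\Phi_k(\mathbf{w}_0)^{-1}\|^2\cdot\|D^2\Phi_k\|\to 0$ follows from the above estimates plus the boundedness of $D^2\Phi_k$ in a fixed neighborhood of $\mathbf{w}_0$, then produces, for $k$ sufficiently large, a zero $\zeta^{(k)}$ of $\Phi_k$ in a fixed neighborhood of $\mathbf{w}_0$ in $\mathbb{H}^n$, with $\zeta^{(k)}\to \mathbf{w}_0$. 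Pulling back, $z_i^{(k)} := \gamma_k^{(i)}\zeta_i^{(k)}$ solves the original system; since $z_i^{(k)}\to x_i^0$, we have $\mathbf{z}^{(k)}\in U\cap\mathbb{H}^n$ for $k$ large, and distinctness of the $\gamma_k^{(i)}$ forces the $\mathbf{z}^{(k)}$ to be distinct, yielding infinitely many solutions.

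\emph{Main obstacle.} I expect the delicate step to be the genericity argument in Step 1, particularly the edge case where some $p_i$ is constant with $c_i$ a totally ramified critical value of $f_i$ (so $f_i'$ vanishes at \emph{every} preimage of $c_i$). Then the Jacobian $D\Phi_k(\mathbf{w}_0)$ is singular and the Newton step as stated fails. One handles this case separately by using the constant $i$-th equation to fix $z_i$ in the orbit $\Gamma_0^{(i)}\cdot w_0^{(i)}$ and reducing to an $(n-1)$-variable instance of the proposition with modified $p_j$, or by replacing Newton's method with a multivariate Rouch\'e argument applied to the local normal form $a_{m_i}(\zeta_i - w_0^{(i)})^{m_i} + (\text{small perturbation})$.
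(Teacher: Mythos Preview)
Your proof is correct and structurally parallel to the paper's: both locate a real base point $\mathbf{x}_0$ at which each value $p_i(\mathbf{x}_0)$ is attained by $f_i$ (using that non-constant holomorphic functions cannot vanish identically on $U\cap\mathbb{R}^n$), both exploit that orbits of a Fuchsian group of the first kind accumulate at every real point, and both conclude by a perturbation argument on shrinking neighbourhoods. The difference lies in that last step. The paper applies the multivariate Rouch\'e theorem directly on the moving polydiscs $\gamma_k B$: since $f-\alpha$ is $\Gamma$-invariant, its minimum modulus on $\partial(\gamma_k B)$ is a fixed $\delta>0$, while $\|g-\alpha\|<\delta$ on $\gamma_k B$ by continuity once $\gamma_k B$ is close enough to $\mathbf{x}_0$; this yields a zero of $f-g$ with no regularity assumption on the zero of $f-\alpha$. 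You instead pull back by $\gamma_k$ to a fixed neighbourhood of $\mathbf{w}_0$ and run Newton--Kantorovich, which is precisely the route taken in the exponential analogue of D'Aquino--Fornasiero--Terzo that the paper explicitly cites and then deliberately departs from.

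What each approach buys: Rouch\'e is more robust and entirely avoids your ``main obstacle''---it never needs $f_i'(w_0^{(i)})\neq 0$, so no genericity with respect to critical values and no separate treatment of constant $p_i$ landing on a totally ramified value. Your Newton step gives a little more information (local uniqueness, an explicit convergent sequence), at the cost of the extra genericity hypothesis in Step~1 and the case analysis you flag. Your suggested fix~(b) via Rouch\'e on the local normal form is exactly the paper's argument; fix~(a) also works but, as you note, the reduced $(n-1)$-variable system depends on $k$, so one is really re-running the perturbation argument rather than invoking the proposition inductively.

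Two minor points worth tightening. First, in Step~1 you should state explicitly why a proper analytic subset of $U$ cannot contain $U\cap\mathbb{R}^n$ (a holomorphic function vanishing on an open real set vanishes identically); the paper spells this out. Second, your claim that $D^2\Phi_k$ is bounded near $\mathbf{w}_0$ deserves one line: from $|c\zeta+d|\ge |c|\,\mathrm{Im}(\zeta)$ one gets $|\gamma''(\zeta)|\le 2\,\mathrm{Im}(\gamma\zeta)/\mathrm{Im}(\zeta)^2$, which tends to $0$ uniformly on compacta, so in fact $D^2\Phi_k$ converges to the $k$-independent Hessian of the $f_i$ part.
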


We remark that in the exponential case, the proof of \cite[Theorem 2.1]{daquino2} relies on a theorem of Kantorovich which refines Newton's approximation method for finding zeros of vector functions. Our proof, instead, goes on a different direction; we use Rouch\'e's theorem in several variables and standard properties of Fuchsian groups and automorphic functions.

For the convenience of the reader we recall Rouch\'e's theorem  and refer to \cite[Theorem 2 in Chapter IV \textsection 18.55]{shabat} for details. See also \cite[p.~287]{shabat} for the definition of the \emph{order} of a zero of a holomorphic mapping in several variables.

\begin{thm}[Rouch\'e]
\label{thm:rouche}
Let $U\subset \mathbb{C}^n$ be a bounded 
domain with Jordan smooth boundary $\partial U$, and let $f,g:\overline{U}\to \mathbb{C}^n$ be two continuous functions with components $f_i$ and $g_i$, respectively, whose restrictions to $U$ are holomorphic. If at each point $\mathbf{x}$ in $\partial U$ for at least one $i$ in $\{1,\ldots,n\}$ we have 
\begin{equation}
\label{eq:rouche}
    |f_i(\mathbf{x})|>|g_i(\mathbf{x})|,
\end{equation}
then the map $f+g$ has as many zeros (counting orders) as $f$ in $U$.
\end{thm}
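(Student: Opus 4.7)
The plan is to prove the several-variables Rouché theorem by exhibiting a well-defined zero-counting functional for continuous maps $\overline{U}\to\mathbb{C}^n$ that are holomorphic on $U$ and nonzero on $\partial U$, and then showing it is invariant under the straight-line homotopy from $f$ to $f+g$; the coordinate-wise dominance hypothesis will serve precisely to keep any zero from escaping to $\partial U$ during the deformation.

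First I would verify that both $f$ and $f+g$ are nonzero on $\partial U$. At any $\mathbf{x}\in\partial U$ there is an index $i$ with $|f_i(\mathbf{x})|>|g_i(\mathbf{x})|\geq 0$, which immediately gives $f_i(\mathbf{x})\neq 0$ and, via $|f_i+g_i|\geq|f_i|-|g_i|>0$, also $(f+g)_i(\mathbf{x})\neq 0$. Next, for any continuous $h\colon\overline{U}\to\mathbb{C}^n$ that is holomorphic on $U$ and nowhere zero on $\partial U$, the zero locus $h^{-1}(0)\subset U$ is a compact analytic subvariety of the open set $U\subseteq\mathbb{C}^n$; applying the maximum modulus principle to the coordinate functions restricted to this variety forces every irreducible component to be a single point, so $h^{-1}(0)$ consists of finitely many isolated zeros. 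At each such zero $\mathbf{a}$, the order of $h$ is defined as the local mapping degree of the germ $h\colon(U,\mathbf{a})\to(\mathbb{C}^n,0)$, equivalently as $\dim_{\mathbb{C}}\mathcal{O}_{\mathbf{a}}/(h_1,\ldots,h_n)$.

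Second, I would identify the total number of zeros of $h$ in $U$ counted with orders with the topological degree
\[
N(h)\;=\;\deg\bigl(h/|h|\colon\partial U\to S^{2n-1}\bigr),
\]
computable via the Kronecker / Bochner--Martinelli integral of the normalized map over $\partial U$. This integral is, on the one hand, a homotopy invariant of the boundary map $h/|h|$, and, on the other, localizes at each zero to recover the order defined above (the latter being the positivity-of-local-degree phenomenon for holomorphic maps, which is what makes the counting agree with the naive order without signs).

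Third, I would run the homotopy. Set $h_t:=f+tg$ for $t\in[0,1]$; the key claim is that $h_t(\mathbf{x})\neq 0$ for every $(\mathbf{x},t)\in\partial U\times[0,1]$. Indeed, $h_t(\mathbf{x}_0)=0$ would force $f_i(\mathbf{x}_0)=-tg_i(\mathbf{x}_0)$ for every $i$, hence $|f_i(\mathbf{x}_0)|=t|g_i(\mathbf{x}_0)|\leq|g_i(\mathbf{x}_0)|$ for all $i$, contradicting the hypothesis at $\mathbf{x}_0$. Therefore $t\mapsto h_t/|h_t|$ is a continuous family of maps $\partial U\to S^{2n-1}$, its degree is constant in $t$, and so $N(f)=N(h_0)=N(h_1)=N(f+g)$, which is the conclusion. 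The main obstacle is the second step: in one variable this identification is just the argument principle, but in $\mathbb{C}^n$ tying local algebraic orders to a boundary topological degree is genuinely substantive, requiring either the explicit Bochner--Martinelli kernel representation or a Brouwer-degree framework together with positivity of local degrees for holomorphic maps; once that ingredient is in place, the deformation argument in the third step essentially mirrors the classical one-variable proof.
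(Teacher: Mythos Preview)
The paper does not prove this theorem at all: it is quoted from \cite[Theorem 2 in Chapter IV \S 18.55]{shabat} as a black-box tool, with the phrase ``For the convenience of the reader we recall Rouch\'e's theorem and refer to \ldots\ for details.'' So there is no in-paper proof to compare against.

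That said, your outline is the standard one and is essentially correct. The three-step structure --- checking nonvanishing on $\partial U$, identifying the zero count with a boundary degree (via the Bochner--Martinelli kernel or Brouwer degree), and then running the linear homotopy $h_t=f+tg$ --- is exactly how the several-variable Rouch\'e theorem is proved in references such as Shabat or A\u{\i}zenberg--Yuzhakov. Your observation that the coordinate-wise dominance at each boundary point is precisely what is needed to keep $h_t$ nonvanishing on $\partial U$ for all $t\in[0,1]$ is the heart of the argument. The one place where your sketch is genuinely thin is, as you yourself flag, the second step: the equality between the sum of local algebraic multiplicities and the global topological degree over $\partial U$ is a nontrivial theorem in its own right, and a complete proof would need to either invoke it explicitly or carry out the Bochner--Martinelli localization. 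Your finiteness-of-zeros argument via the maximum principle on a compact analytic set is also a bit terse, though the conclusion is correct.
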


Note that, if we have that $\|f(\mathbf{x})\|> \|g(\mathbf{x})\|$ for some $\mathbf{x}$ in $\partial U$, where $\|\cdot\|$ denotes the Euclidean norm\footnote{The Euclidean norm is given by $\|(x_1,\ldots,x_n)\|:=\sqrt{|x_1|^2+\ldots+|x_n|^2}$ for $(x_1,\ldots,x_n)$ in $\mathbb{C}^n$.} on $\mathbb{C}^{n}$, then condition (\ref{eq:rouche}) is satisfied for at least one $i$ in $\{1,\ldots,n\}$ automatically. 

\begin{proof}[Proof of Proposition \ref{prop:masser}]
For simplicity, we fix an integer $m$ in $ \{0,\ldots,n\}$ and assume that $p_1,\ldots,p_m$ are non-constant functions and that $p_{m+1},\ldots,p_n$ are constant, with the obvious conventions if $m=0$ or $m=n$. For each $i$ in $ \{1,\ldots,n\}$ we let $\Gamma_{i}$ denote the Fuchsian group of the first kind with respect to which $f_{i}$ is invariant, and let $X_i$ denote the compactification of the Riemann surface $Y_i:=\Gamma_i\backslash \mathbb{H}$. Consider the Riemann sphere $\widehat{\mathbb{C}}=\mathbb{C}\cup \{\infty\}$ as a compact Riemann surface. Then, for every $i$ in $ \{1,\ldots,m\}$ the function $f_i$ induces a non-constant holomorphic map $F_i:X_i\to \widehat{\mathbb{C}}$. Since $F_i$ is non-constant, we have $F_i(X_i)=\widehat{\mathbb{C}}$. Since the set $X_i\setminus Y_i$ is finite, we conclude that $ F_i(X_i \setminus Y_i)$ is finite. Put $A_i:=\mathbb{C}\setminus F_i(Y_i)$. Then $A_i\subseteq F_i(X_i \setminus Y_i)$, hence $A_i$ is finite. Let $f_i^{-1}(\infty)$ denote the set of poles of $f_i$ in $\mathbb{C}$. Then, we have $ f_i(\mathbb{H}\setminus f_i^{-1}(\infty))=F_i( Y_i)\cap \mathbb{C}=\mathbb{C}\setminus A_i$ for every $i$ in $ \{1,\ldots,m\}$. 
This implies the equality of sets
$$\mathbb{C}^m\setminus \left( f_1(\mathbb{H}\setminus f_1^{-1}(\infty))\times \cdots \times f_m(\mathbb{H}\setminus f_m^{-1}(\infty))\right)=\bigcup_{i=1}^m \{(x_k)_{k=1}^m\in \mathbb{C}^m:x_i\in A_i\}.$$
Now, if the set
$$\{(p_1(\mathbf{x}),\ldots,p_m(\mathbf{x})):\mathbf{x}\in U\cap\mathbb{R}^n\}$$
        were contained in  $\mathbb{C}^m\setminus \left( f_1(\mathbb{H}\setminus f_1^{-1}(\infty))\times \cdots \times f_m(\mathbb{H}\setminus f_m^{-1}(\infty))\right)$, then the  function
$$P:U\to \mathbb{C} \text{ defined as }P(\mathbf{x}) = \prod_{i=1}^m\prod_{a \in A_i}(p_i(\mathbf{x})-a)$$ 
would be a holomorphic function on $U$ vanishing over $U \cap\mathbb{R}^n$. 
This would imply that $P=0$, see e.g.~\cite[p.~21]{shabat}, hence at least one $p_i$ among the functions $p_1,\ldots,p_m$ would be constant, which is a contradiction. This proves that there exists a point $\mathbf{x}_0$ in $ U\cap\mathbb{R}^{n}$ such that for every $i$ in $ \{1,\ldots,m\}$, the automorphic function $f_{i}$ attains the value $p_{i}(\mathbf{x}_0)$. By our hypothesis we conclude that  for every $i$ in $\{1,\ldots,n\}$, $f_{i}$ attains the value $p_{i}(\mathbf{x}_0)$.
Consider the action of $\Gamma_1\times \cdots \times \Gamma_n$ on $\mathbb{H}^n$  given by
$$(g_1,\ldots,g_n)(z_1,\ldots,z_n):=(g_1z_1,\ldots,g_nz_n).$$
Put $\alpha_i:= p_{i}(\mathbf{x}_0), \mathbf{\alpha}:=(\alpha_i)_{i=1}^n$ and choose $\mathbf{w}= \left(w_{i}\right)_{i=1}^{n}$ in $ \mathbb{H}^{n}$  such that $f_{i}(w_{i}) = \alpha_{i}$ for every $i$ in $\{1,\ldots,n\}$. Since $\Gamma_i$ is a Fuchsian group of the first kind, we have that its limit set $\Lambda(\Gamma_i)$  equals 
$\mathbb{R}\cup \{\infty\}$ (see \cite[\S 8.1]{beardon}). By \cite[Theorem 5.3.9]{beardon}, for every point $z$ in $\mathbb{H}$ the set of accumulation points of the $\Gamma_i$-orbit of $z$ equals $\Lambda(\Gamma_i)$. In particular, we can find a sequence $\left(\mathbf{\gamma}_{k}\right)_{k=1}^{\infty}$ of elements $\gamma_k=(\gamma_{k,i})_{i=1}^n$ in $\Gamma_{1}\times\cdots\times\Gamma_{n}$ such that $\|\gamma_{k}\mathbf{w} - \mathbf{x}_0\|$ tends to zero as $k$ tends to infinity (we recall that $\|\cdot\|$ denotes the  Euclidean norm on $\mathbb{C}^{n}$). \\
Let $g:U\rightarrow\mathbb{C}^{n}$ and $f:\mathbb{H}^{n}\rightarrow\mathbb{C}^{n}$ be given by $g(\mathbf{z}) = \left(p_{1}(\mathbf{z}),\ldots,p_{n}(\mathbf{z})\right)$ and $f(\mathbf{z}) = \left(f_{1}(z_1),\ldots,f_{n}(z_n)\right)$ for $\mathbf{z}=\left(z_{i}\right)_{i=1}^{n}$, respectively.
If $m=0$, then for every positive integer $k$ with $\gamma_k \mathbf{w}\in U$ we have $f(\gamma_k\mathbf{w})=g(\gamma_k\mathbf{w})$ and the desired result holds. In what follows, we assume $m\geq 1$. 
Since each $f_i$ is not a constant function, the usual identity theorem from complex analysis implies that for every $i$ in $\{1,\ldots ,m\}$ there exists a small Euclidean closed disk $B_i\subset \mathbb{H}$ around $w_i$ such that $f_i(z)\neq \alpha_i$ for every $z$ in $ B_i\setminus \{w_i\}$. 
Put $B=B_1\times \cdots \times B_n$ and $\delta = \min\left\{\|f(\mathbf{z}) - \alpha\| : \mathbf{z}\in\partial B\right\}$. Denote by $d_{\mathrm{hyp}}$ the hyperbolic distance\footnote{This is the metric given by $d_{\mathrm{hyp}}(z,w)=\mathrm{arcosh}\left(1+\frac{|z-w|^2}{2\mathrm{Im}(z)\mathrm{Im}(w)}\right)$ for $z,w$ in $\mathbb{H}$.} in $\mathbb{H}$ and define
$$d(B):=\max\{d_{\mathrm{hyp}}(z,w):z,w\in B_i,i\in \{1,\ldots,n\}\}.$$
For each $i$ in $\{1,\ldots,n\}$ and each positive integer $k$ the set $\gamma_{k,i} B_i$ is an Euclidean disk, hence there exist unique points $l_{i,k},h_{i,k}$ in $\gamma_{k,i} B_i$ such that
$$\mathrm{Im}(l_{i,k})=\min\{\mathrm{Im}(z):z\in \gamma_{k,i} B_i\}\mbox{ and }\mathrm{Im}(h_{i,k})=\max\{\mathrm{Im}(z):z\in \gamma_{k,i} B_i\}.$$ 
By properties of the hyperbolic distance we have 
$$\log \left(\frac{\mathrm{Im}(h_{i,k})}{\mathrm{Im}(l_{i,k})}\right)= d_{\mathrm{hyp}}(h_{i,k},l_{i,k})=d_{\mathrm{hyp}}(\gamma_{k,i}^{-1}h_{i,k},\gamma_{k,i}^{-1}l_{i,k})\leq d(B),$$
for every $i$ and every $k$. This implies that there exists a positive constant $C$, depending only on $B$, such that $\mathrm{Im}(h_{i,k})\leq C \cdot \mathrm{Im}(l_{i,k})$ for every $i$ and $k$. Since $\mathrm{Im}(l_{i,k}) \leq  \mathrm{Im}(\gamma_{k,i} w_i)$ and $\mathrm{Im}(\gamma_{k,i} w_i)$ tends to zero as $k$ tends to infinity, we conclude that $\mathrm{Im}( h_{i,k})$ also tends to zero as $k$ tends to infinity. This implies that for every Euclidean neighbourhood $W$ of $\mathbf{x}_0$ in $\mathbb{C}^n$ there exists a positive integer $N$ such that $\gamma_kB\subset W$ for every $k>N$. By continuity of $g$, we deduce that there exists a positive integer $M$ such that for every $k>M$ we have $\gamma_k B\subseteq U$ and
$$
  \max \{\|g( \mathbf{x})-\alpha\|:\mathbf{x}\in \gamma_k B\}<\delta.  
$$
Since $\delta = \min\left\{\|f(\mathbf{x})-\alpha\| : \mathbf{x}\in\partial(\gamma_{k}B)\right\}$
and $\partial(\gamma_{k}B)$ is a Jordan boundary, we can apply Rouch\'e's theorem  to the functions $f - \alpha$ and $\alpha - g$ on $\gamma_{k}B$ and conclude that these functions  have the same number of zeros (counting orders) in $\gamma_{k}B$ for every $k>M$. As $f-\alpha$ has a zero there (namely $\gamma_{k}\mathbf{w}$), we conclude that $f(\mathbf{z})=g(\mathbf{z})$ has a solution in $\gamma_k B$. Since for every $i$ we know that $h_{i,k}$ tends to zero as $k$ tends to infinity, we can pass to a subsequence, if necessary, and  assume that the sets $\gamma_k B$ for $k>M$ are all pairwise disjoint. This proves that $f(\mathbf{z})=g(\mathbf{z})$ has infinitely many solutions and completes  the proof of the proposition. 
\end{proof}

\begin{remark}
If we choose each $f_i$ to be the $j$ function in Proposition \ref{prop:masser}, then for every $i$ we have that $f_i$ assumes all values of $p_i$ automatically since $j(\mathbb{H})=\mathbb{C}$. 
\end{remark}


\section{Varieties of triangular form}\label{sec:triangular_forms}

For the proof of Theorem \ref{th:main1}, it will be convenient to consider the following type of varieties. 

\begin{defn}
Let $n,d$ be positive integers with $n\leq d<2n$. An affine irreducible algebraic variety $V\subseteq\mathbb{C}^{2n}$ of dimension $d$ will be called \emph{of triangular form}
if it can be defined by polynomials  $p_{1},\ldots,p_{2n-d}$  satisfying the following two conditions:
\begin{enumerate}
        \item[(i)] For every $i$ in $\left\{1,\ldots,2n-d\right\}$ there exist an integer $d_i\geq 1$, polynomials $q_{i,0},\ldots,q_{i,d_i-1}$ in $\mathbb{C}[X_1,\ldots,X_n,Y_1,\ldots,Y_{i-1}]$ and $q_{i,d_i}$ in $\mathbb{C}[X_1,\ldots,X_n]$ non-zero such that
\begin{equation}\label{eq:eqtriangularform}
    p_i=q_{i,d_i}Y_i^{d_i}+\sum_{k=0}^{d_i-1}q_{i,k}Y_i^k.
\end{equation}
In particular, each $p_{i}$ depends only on $Y_1,\ldots,Y_{i}$ among the variables $Y_{1},\ldots,Y_{n}$.
\item[(ii)] If we define 
$\pi_{V}:V\to \mathbb{C}^d$ by
$$\pi_V(x_1,\ldots,x_n,y_1,\ldots,y_n)=(x_1,\ldots,x_n,y_{2n-d+1},\ldots,y_n),$$
then $\mathrm{deg}(\pi_{V})=d_1\cdots d_{2n-d}$.
\end{enumerate}
\end{defn}
Given an algebraic variety $V\subseteq \mathbb{C}^{2n}$ of triangular form 
as above, we define
$$V_{0}=\{(x_1,\ldots,x_n,y_1,\ldots,y_n)\in V: q_{i,d_i}(x_1,\ldots,x_n)\neq 0 \mbox{ for every } i\text{ in }\{1,\ldots,2n-d\}\}.$$
Note that $V_0$ is a non-empty Zariski open subset of $V$ and $\pi_{V}(V_0)$ is a non-empty Zariski open subset of $\mathbb{C}^d$. In particular, $\pi_{V}$ is dominant and  $\mathrm{deg}(\pi_{V})$ is well defined. 

\begin{remark}
For a general irreducible algebraic variety $V\subseteq \mathbb{C}^{2n}$ defined by polynomials $p_1,\ldots,p_{2n-d}$ satisfying \eqref{eq:eqtriangularform} it might happen that $\mathrm{deg}(\pi_{V})< d_1\cdots d_{2n-d}$.  An example is given by the variety
$$V=V(Y_1-X_1^2,Y_2^2+2Y_2X_1+Y_1)\subset \mathbb{C}^4.$$
Note that $V=V(Y_1-X_1^2,Y_2+X_1)$ and $\mathrm{deg}(\pi_{V})=1$.
\end{remark}

\begin{lem}\label{lem:curves_are_triangular}
Let $V\subset \mathbb{C}^2$ be an irreducible algebraic curve that is not a vertical line. Then, $V$ is of triangular form.
\end{lem}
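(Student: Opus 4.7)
The plan is to write down an irreducible defining polynomial for $V$ and verify that it meets the two conditions of the definition of triangular form when $n=d=1$ (so that $2n-d=1$ and there is a single polynomial $p_1$ to produce).

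First I would use the fact that an irreducible algebraic curve in $\mathbb{C}^2$ is the vanishing locus of a single irreducible polynomial $p\in\mathbb{C}[X,Y]$, unique up to a scalar. Since $V$ is not a vertical line, $p$ cannot lie in $\mathbb{C}[X]$: if $p\in\mathbb{C}[X]$, irreducibility would force $p=c(X-a)$, giving exactly a vertical line. Hence $d_1:=\deg_Y(p)\geq 1$, and I can write
\[
p=q_{1,d_1}(X)Y^{d_1}+\sum_{k=0}^{d_1-1}q_{1,k}(X)Y^k,
\]
with $q_{1,k}\in \mathbb{C}[X]$ and $q_{1,d_1}\neq 0$. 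This is exactly condition (i) of the definition with $n=1$.

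Next I would verify condition (ii), which with $n=d=1$ becomes $\deg(\pi_V)=d_1$, where $\pi_V:V\to \mathbb{C}$ is the projection onto $X$. The plan here is the standard computation via function fields: the projection degree equals $[\mathbb{C}(V):\mathbb{C}(X)]$. Because $V$ is not a vertical line, $p$ has positive $Y$-degree and, being irreducible in $\mathbb{C}[X,Y]$, remains irreducible in $\mathbb{C}(X)[Y]$ by Gauss's lemma (the only way it could reduce is if it had a nontrivial content in $X$, but that content would split off a factor purely in $X$, contradicting irreducibility of $p$). Thus $p$, viewed over $\mathbb{C}(X)$, is (up to scalar) the minimal polynomial of the class of $Y$ in $\mathbb{C}(V)=\mathrm{Frac}(\mathbb{C}[X,Y]/(p))$, giving $[\mathbb{C}(V):\mathbb{C}(X)]=d_1$, as required.

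I do not anticipate any real obstacle; the only delicate point is making sure that irreducibility in $\mathbb{C}[X,Y]$ transfers to irreducibility in $\mathbb{C}(X)[Y]$, so that the $Y$-degree of $p$ really is the generic number of points in a fiber of $\pi_V$. Once that is in place, the two conditions of the definition follow at once.
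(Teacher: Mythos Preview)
Your proof is correct. The verification of condition~(i) is the same as in the paper. For condition~(ii), however, you take a different route: you compute $\deg(\pi_V)$ as the degree $[\mathbb{C}(V):\mathbb{C}(X)]$ of the function-field extension, and use Gauss's lemma to conclude that $p$, being irreducible in $\mathbb{C}[X][Y]$ with positive $Y$-degree (hence primitive), remains irreducible in $\mathbb{C}(X)[Y]$ and is therefore the minimal polynomial of $Y$ over $\mathbb{C}(X)$. The paper instead argues geometrically: it forms the resultant $r\in\mathbb{C}[X]$ of $p$ and $\frac{\partial p}{\partial Y}$, notes that $r\neq 0$ because $p$ is irreducible with $\frac{\partial p}{\partial Y}\neq 0$, and exhibits the Zariski open set $A=\{z:p_{d_Y}(z)\,r(z)\neq 0\}$ over which every fiber of $\pi_V$ has exactly $d_Y=d_1$ distinct points. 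Your function-field argument is shorter and more conceptual; the paper's resultant argument is more hands-on and has the incidental benefit of producing an explicit unramified locus, in the same spirit as the later use of the Implicit Function Theorem in Proposition~\ref{prop:conj1fortriangularform}.
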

\begin{proof}
Write $V=V(p)$ with $p(X,Y)$ in $\mathbb{C}[X,Y]$ irreducible. Let us write
$$p(X,Y)=\sum_{i=0}^{d_Y}p_i(X)Y^i,$$
where $p_0(X),\ldots,p_{d_Y}(X)\in \mathbb{C}[X]$ and $p_{d_Y}(X)\neq 0$, so $d_Y$ is the degree of $p$ in the $Y$ variable. Since $V$ is not a vertical line, we have $\frac{\partial p}{\partial Y}\neq 0$, hence  $p$ satisfies \eqref{eq:eqtriangularform} with $d_1=d_Y$. Put $R:=\mathbb{C}[X]$ and define $r$ as the resultant of $p$ and $\frac{\partial p}{\partial Y}$ as polynomials in $R[Y]$, see e.g.~\cite[Chapter 2,  \S2]{griffiths}. Since $p$ is irreducible and $\frac{\partial p}{\partial Y}\neq 0$, we have by \cite[Theorem 2.2]{griffiths} that $r$ is a non-zero element of $R$. It follows that the set
$$A:=\{z\in \mathbb{C}:p_{d_Y}(z)r(z)\neq 0\}$$
is a non-empty Zariski open subset of $\mathbb{C}$. Note that $A$ is contained in the image of the map $\pi_V:V\to \mathbb{C}$, $\pi_V(x,y)= x$. By \cite[Corollary 2.4]{griffiths} there exist polynomials $f,h$ in $R[Y]$ with $fp+h\frac{\partial p}{\partial Y}=r$. This implies that for every $z$ in $ A$ and every $(w_1,w_2)$ in $ \pi_V^{-1}(z)$ we have  $\frac{\partial p}{\partial Y}(w_1,w_2)\neq 0$, hence $z$ has exactly $d_Y$ preimages under $\pi_V$. This proves that $\mathrm{deg}(\pi_V)=d_Y$ and completes the proof of the lemma.
\end{proof}

The main properties of algebraic varieties of triangular form that we are going to use are summarized in the following proposition. For convenience,  for every set $A$ we define $A\times \mathbb{C}^0:=A$.

\begin{prop}\label{prop:conj1fortriangularform}
Let $n,d$ be positive integers with $n\leq d<2n$ and let $V\subseteq \mathbb{C}^{2n}$ be an  algebraic variety of dimension $d$ of triangular form. Then, there exists a non-empty Zariski open subset $B$ of $\mathbb{C}^d$ such that the following properties hold.
\begin{enumerate}
    \item  $B\subseteq \pi_{V}(V_0)$ and $B\cap \mathbb{R}^d$ contains a non-empty Zariski open subset of $ \mathbb{R}^d$.
    \item For every point $(z_i)_{i=1}^{2n}$ in $ \pi_{V}^{-1}(B)$, there exist Euclidean neighbourhoods $U_1,U_2$ of $(z_1,\ldots,z_n)$ in $\mathbb{C}^n$ and of $(z_{n+1},\ldots,z_{3n-d})$ in $ \mathbb{C}^{2n-d}$, respectively, and a holomorphic function $H:U_1\to U_2$ such that
$$ V\cap (U_1\times U_2\times \mathbb{C}^{d-n})=\{(\mathbf{w}_1,H(\mathbf{w}_1),\mathbf{w}_2):\mathbf{w}_1\in U_1,\mathbf{w}_2\in \mathbb{C}^{d-n}\}$$
if $d>n$, and
$$ V\cap (U_1\times U_2)=\{(\mathbf{w}_1,H(\mathbf{w}_1)):\mathbf{w}_1\in U_1\}$$
if $d=n$.
\end{enumerate}
Moreover, if $J$ is a finite subset of $\mathbb{C}^{2n-d}$ such that $V$ is not contained in $\mathbb{C}^n\times J \times \mathbb{C}^{d-n}$, then we have the following properties.
\begin{enumerate}
    \item[(3)]  For every point $(z_i)_{i=1}^{2n}$ in $ \pi_{V}^{-1}(B\cap \mathbb{R}^d)$ and every triple $U_1,U_2,H$ as in part (2), 
    there exists an Euclidean open subset $U_1'\subseteq U_1$ 
    such that $U_1' \cap \mathbb{R}^d\neq \emptyset$ and $H(U_1')\cap J=\emptyset$. 
    \item[(4)] For every point $\mathbf{a}$ in $  B\cap \mathbb{R}^d$ and every Euclidean neighbourhood $U$ of $\mathbf{a}$ in $\mathbb{C}^d$, the set $$E_j^n\cap \pi_{V}^{-1}(U) \cap (\mathbb{C}^{2n}\setminus (\mathbb{C}^n\times J \times \mathbb{C}^{d-n}))$$
is infinite.  \end{enumerate}
In particular, there exist infinitely many points in the set $E_{j}^{n}\cap V_0\cap (\mathbb{C}^{2n}\setminus (\mathbb{C}^n\times J \times \mathbb{C}^{d-n}))$.
\end{prop}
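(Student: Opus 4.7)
The plan is to prove the four parts in order, with parts (1)--(2) furnishing the algebro-geometric scaffolding and parts (3)--(4) delivering the analytic content; the ``in particular'' statement will follow from (4) by taking $U = \mathbb{C}^d$.

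For parts (1) and (2), I would first verify that $\pi_V(V_0) = W \times \mathbb{C}^{d-n}$, where $W \subset \mathbb{C}^n$ is the Zariski open subset defined by $q_{1,d_1}\cdots q_{2n-d,d_{2n-d}} \neq 0$. This follows from the triangular form of the equations: over a point whose first $n$ coordinates lie in $W$, one solves inductively for $y_1,\ldots,y_{2n-d}$ since each $p_i$ is a non-trivial polynomial in $Y_i$ with non-vanishing leading coefficient, while $y_{2n-d+1},\ldots,y_n$ stay free. The Jacobian of $(p_1,\ldots,p_{2n-d})$ with respect to $(Y_1,\ldots,Y_{2n-d})$ is lower triangular with diagonal entries $\partial p_i/\partial Y_i$, and the degree hypothesis $\deg(\pi_V) = d_1\cdots d_{2n-d}$ forces the generic fibre to consist of exactly that many distinct points, so this Jacobian is non-zero on a Zariski dense open subset of $V$. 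Its vanishing locus $S \subset V$ is then a proper closed subvariety of dimension $<d$, and $\pi_V(S)$ sits inside a proper Zariski closed subset of $\mathbb{C}^d$. Setting $B := (W\times \mathbb{C}^{d-n})\setminus \overline{\pi_V(S)}$ yields a non-empty Zariski open subset of $\mathbb{C}^d$ contained in $\pi_V(V_0)$; since the real zero set of a non-zero complex polynomial is a proper Zariski closed subset of $\mathbb{R}^d$, the intersection $B \cap \mathbb{R}^d$ contains a Zariski open subset of $\mathbb{R}^d$. Part (2) is then the implicit function theorem at each point of $\pi_V^{-1}(B)$: the Jacobian is invertible, so a holomorphic $H$ expressing $(Y_1,\ldots,Y_{2n-d})$ in terms of $(X_1,\ldots,X_n)$ exists locally, and since $p_1,\ldots,p_{2n-d}$ do not involve $Y_{2n-d+1},\ldots,Y_n$, the $\mathbf{w}_2$-coordinates remain unconstrained; one shrinks $U_1\times U_2$ to exclude the other finitely many fibre points.

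For part (3), the hypothesis $V \not\subseteq \mathbb{C}^n\times J \times \mathbb{C}^{d-n}$ combined with the irreducibility of $V$ rules out $H\equiv p_0$ for any $p_0 \in J$: otherwise the open piece of $V$ from part (2) would lie inside $\mathbb{C}^n\times \{p_0\}\times \mathbb{C}^{d-n}$, forcing all of $V$ to lie inside $\mathbb{C}^n\times J\times \mathbb{C}^{d-n}$. Hence each $H^{-1}(\{p\})$ with $p \in J$ is a proper complex analytic subset of $U_1$. If further $H^{-1}(\{p\})\cap \mathbb{R}^n$ contained a non-empty open subset of $\mathbb{R}^n$, then all partial derivatives of $H-p$ at a real point in this set would vanish (being determined by values on the totally real subset), forcing $H \equiv p$ on the connected $U_1$, a contradiction. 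Therefore $H^{-1}(J) \cap \mathbb{R}^n$ has empty interior in $U_1 \cap \mathbb{R}^n$, and any real point $\mathbf{x}$ outside $H^{-1}(J)$ together with a sufficiently small open neighbourhood of $\mathbf{x}$ (on which $H$ remains outside $J$ by continuity) provides the required $U_1'$.

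The crux is part (4), handled via Proposition \ref{prop:masser}. Fix $\mathbf{a} = (a_1,\ldots,a_d) \in B \cap \mathbb{R}^d$ and an open neighbourhood $U$ of $\mathbf{a}$; after shrinking we may assume $U = U_A \times U_B \subseteq \mathbb{C}^n \times \mathbb{C}^{d-n}$. Take a point of $\pi_V^{-1}(\mathbf{a})$ and, using parts (2) and (3), produce a connected open $U_1' \subseteq U_1 \cap U_A \cap W$ with $U_1' \cap \mathbb{R}^n \neq \emptyset$ and $H(U_1')\cap J = \emptyset$. Fix constants $c_1,\ldots,c_{d-n} \in U_B$ and apply Proposition \ref{prop:masser} on $U_1'$ with $f_1 = \cdots = f_n = j$ and
\begin{equation*}
p_i(z_1,\ldots,z_n) =
\begin{cases}
H_i(z_1,\ldots,z_n) & \text{if } 1 \leq i \leq 2n-d, \\
c_{i-(2n-d)} & \text{if } 2n-d+1 \leq i \leq n;
\end{cases}
\end{equation*}
the hypotheses are met because $j$ is a non-constant meromorphic automorphic function and $j(\mathbb{H}) = \mathbb{C}$ guarantees that $j$ attains every constant value. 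For each of the infinitely many solutions $\mathbf{z} \in U_1' \cap \mathbb{H}^n$ produced, the point $(\mathbf{z}, j(\mathbf{z}))$ lies in $V$ by the parametrization, in $E_j^n$ by construction, projects under $\pi_V$ to $(\mathbf{z}, c_1,\ldots,c_{d-n}) \in U_A\times U_B = U$, has middle block $H(\mathbf{z}) \notin J$, and belongs to $V_0$ since $U_1' \subseteq W$. The ``in particular'' assertion follows by taking $U = \mathbb{C}^d$. The main obstacle is the simultaneous bookkeeping in part (4)---keeping $\mathbf{z}$ in $U_A$, the values $H(\mathbf{z})$ outside $J$, and the padding constants in $U_B$---while the identity-theorem step in part (3) is standard but essential.
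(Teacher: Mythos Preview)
Your proposal is correct and follows essentially the same route as the paper: construct $B$ as a Zariski open set where the triangular Jacobian $\prod_i \partial p_i/\partial Y_i$ is nonzero, invoke the Implicit Function Theorem for (2), use the identity principle for holomorphic functions on a totally real set for (3), and feed the resulting local parametrisation into Proposition~\ref{prop:masser} for (4). The only cosmetic differences are that the paper obtains $B$ by first citing a generic-unramifiedness result (so that each fibre over $B$ has exactly $D=d_1\cdots d_{2n-d}$ points, whence each $\partial p_i/\partial Y_i$ is nonzero by a root-counting argument) rather than by removing the image of the Jacobian locus directly, and that for the ``in particular'' clause the paper takes $U=B$ whereas you take $U=\mathbb{C}^d$ and note your construction already lands in $V_0$; both are fine.
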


\begin{remark}
If we put $J=\emptyset$ then $\mathbb{C}^n\times J \times \mathbb{C}^{d-n}=\emptyset$ and by Proposition \ref{prop:conj1fortriangularform} we have that for every algebraic variety $V\subseteq \mathbb{C}^{2n}$ of triangular form the set $E_j^n\cap V_0$ is infinite.
\end{remark}

Our proof of Proposition \ref{prop:conj1fortriangularform} makes use of the Implicit Function Theorem. Since this result is also used in the proofs of Theorems \ref{th:main3} and \ref{thm:main4}, we recall its formulation for the convenience of the reader (see \cite[\textsection 4.9, Theorem 3]{shabat} for details).

\begin{thm}[Implicit Function Theorem]
Let $B$ be a non-empty open subset of $\mathbb{C}^{n}\times\mathbb{C}^{m}$, $F=(F_{1},\ldots,F_{m}):B\rightarrow\mathbb{C}^{m}$ be a holomorphic map on the variables $(z_{1},\ldots,z_{n+m})$, and $(\mathbf{x}_{0},\mathbf{y}_{0})$ in $\mathbb{C}^{n}\times\mathbb{C}^{m}$ be a point in $B$ satisfying $F(\mathbf{x}_{0},\mathbf{y}_{0}) = 0$ and
\begin{equation}
\label{eq:jacobian}
    \det\left(\frac{\partial F_{\mu}}{\partial z_{\nu}} (\mathbf{x}_{0},\mathbf{y}_{0})\right)_{\mu = 1,\ldots,m;\, \nu = n+1,\ldots,n+m}\neq 0 .
\end{equation}
Then there is an open neighbourhood $U = U_{1}\times U_{2}$ of $(\mathbf{x}_{0},\mathbf{y}_{0})$ contained in $B$ and a holomorphic map $H:U_{1}\rightarrow U_{2}$ such that
\begin{equation*}
    \left\{(\mathbf{x},\mathbf{y})\in U : F(\mathbf{x},\mathbf{y}) = 0\right\} = \left\{(\mathbf{x},H(\mathbf{x})) : \mathbf{x}\in U_{1}\right\}.
\end{equation*}
\end{thm}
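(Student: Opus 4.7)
The plan is to deduce the complex implicit function theorem from its classical real-variable counterpart and then upgrade smoothness to holomorphy by means of the Cauchy--Riemann equations in the guise of Wirtinger calculus. After a harmless translation I may assume $(\mathbf{x}_{0},\mathbf{y}_{0})=(\mathbf{0},\mathbf{0})$. Writing $x_{\nu}=u_{\nu}+iv_{\nu}$ for $\nu=1,\ldots,n$ and $y_{\lambda}=s_{\lambda}+it_{\lambda}$ for $\lambda=1,\ldots,m$, and splitting each component of $F$ into real and imaginary parts, the map $F$ becomes a real-analytic (in particular $C^{1}$) map $\widetilde{F}$ from an open subset of $\mathbb{R}^{2n}\times\mathbb{R}^{2m}$ to $\mathbb{R}^{2m}$.

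The first technical step is to verify that the real Jacobian of $\widetilde{F}$ with respect to the $2m$ real variables $(s_{\lambda},t_{\lambda})$ is invertible at the origin. This rests on the standard linear-algebra identity that if a real $2m\times 2m$ matrix is the realification of some $\mathbb{C}$-linear endomorphism $A$ of $\mathbb{C}^{m}$, then its real determinant equals $|\det_{\mathbb{C}}(A)|^{2}$. Because $F$ is holomorphic, the Cauchy--Riemann equations guarantee that the block $\partial\widetilde{F}/\partial(s,t)$ at the origin is precisely the realification of the complex Jacobian $\bigl(\partial F_{\mu}/\partial z_{\nu}\bigr)_{\mu=1,\ldots,m;\,\nu=n+1,\ldots,n+m}$, which is invertible by hypothesis. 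The classical real implicit function theorem then furnishes open neighbourhoods $U_{1}\subseteq\mathbb{C}^{n}$ of $\mathbf{0}$ and $U_{2}\subseteq\mathbb{C}^{m}$ of $\mathbf{0}$ with $U_{1}\times U_{2}\subseteq B$, together with a $C^{1}$ map $H:U_{1}\to U_{2}$ uniquely characterised by $H(\mathbf{0})=\mathbf{0}$ and $F(\mathbf{x},H(\mathbf{x}))\equiv 0$ on $U_{1}$.

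It then remains to prove that $H$ is holomorphic. Shrinking $U_{1}$ if necessary, the matrix $\bigl(\partial F_{\mu}/\partial z_{n+\lambda}(\mathbf{x},H(\mathbf{x}))\bigr)_{\mu,\lambda}$ remains invertible on $U_{1}$ by continuity. Applying the Wirtinger operator $\partial/\partial\bar{x}_{k}$ to the identity $F_{\mu}(\mathbf{x},H(\mathbf{x}))=0$ via the real chain rule (valid since $H$ is $C^{1}$), the holomorphy of $F$ annihilates every antiholomorphic contribution from $\partial F_{\mu}/\partial\bar{z}_{\bullet}$, leaving
\begin{equation*}
    \sum_{\lambda=1}^{m}\frac{\partial F_{\mu}}{\partial z_{n+\lambda}}\bigl(\mathbf{x},H(\mathbf{x})\bigr)\cdot\frac{\partial H_{\lambda}}{\partial\bar{x}_{k}}(\mathbf{x})\;=\;0
\end{equation*}
for all $\mu\in\{1,\ldots,m\}$ and $k\in\{1,\ldots,n\}$. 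Invertibility of the coefficient matrix forces $\partial H_{\lambda}/\partial\bar{x}_{k}\equiv 0$ throughout $U_{1}$, so $H$ satisfies the Cauchy--Riemann equations componentwise and is therefore holomorphic.

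The step I expect to be the main obstacle is the holomorphy upgrade at the end: the real implicit function theorem only provides $C^{1}$ regularity a priori, so one must be careful when invoking the Wirtinger chain rule on the composition $(\mathbf{x},\bar{\mathbf{x}})\mapsto F(\mathbf{x},H(\mathbf{x}))$. The key observation that makes this routine is purely algebraic: the Wirtinger derivatives $\partial/\partial x$ and $\partial/\partial\bar{x}$ are $\mathbb{C}$-linear combinations of the real partials, and so the real multivariable chain rule, applied once and then rearranged, is enough to derive the Cauchy--Riemann identities for $H$ without presupposing any complex differentiability of $H$.
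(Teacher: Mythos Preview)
Your proof is correct. The paper does not actually prove the Implicit Function Theorem: it is stated only as background, with a reference to \cite[\S 4.9, Theorem 3]{shabat} for the details. So there is no ``paper's own proof'' to compare against.

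That said, the route you take --- reduce to the real implicit function theorem via realification, then upgrade $C^{1}$ to holomorphic by differentiating the implicit identity with the Wirtinger operator $\partial/\partial\bar{x}_{k}$ --- is one of the standard arguments and is carried out cleanly. The two points that sometimes trip people up are handled correctly: (i) the real Jacobian block is indeed the realification of the complex one because $F$ is holomorphic, with real determinant $|\det_{\mathbb{C}}(\partial F/\partial y)|^{2}\neq 0$; and (ii) in the chain-rule step, the terms $\partial F_{\mu}/\partial\bar z_{\bullet}$ vanish and the terms $\partial x_{\nu}/\partial\bar x_{k}$ vanish, so only the displayed linear system survives, and invertibility of the coefficient matrix forces $\partial H_{\lambda}/\partial\bar x_{k}\equiv 0$. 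Since $H$ is already $C^{1}$, the Cauchy--Riemann equations suffice for holomorphy.
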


We now give the proof of Proposition \ref{prop:conj1fortriangularform}.

\begin{proof}[Proof of Proposition \ref{prop:conj1fortriangularform}]
Let $p_{1},\ldots,p_{2n-d}$ be polynomials in $\mathbb{C}[X_{1},\ldots,X_{n},Y_{1},\ldots,Y_{2n-d}]$ defining $V$ satisfying the conditions of triangular form, let $q_{i,0},\ldots,q_{i,d_i}$ be the polynomials satisfying \eqref{eq:eqtriangularform} and put $D:=\deg(\pi_V)=d_1\cdots d_{2n-d}$. In the case $d>n$ define
\begin{equation*}
B_0:=\{(x_1,\ldots,x_n,y_{2n-d+1},\ldots,y_{n})\in \mathbb{C}^{d}:q_{i,d_i}(x_1,\ldots,x_n) \neq 0 \text{ for every }i \text{ in } \{1,\ldots,2n-d\}\},    
\end{equation*}
and in the case $d=n$ define
\begin{equation*}
B_0:=\{(x_1,\ldots,x_n)\in \mathbb{C}^{n}:q_{i,d_i}(x_1,\ldots,x_n) \neq 0 \text{ for every }i \text{ in } \{1,\ldots,n\}\}.    
\end{equation*}
Then, $B_0$ is a non-empty  Zariski open subset of $\mathbb{C}^{d}$. Since $\mathbb{C}$ is algebraically closed, we have $B_0\subseteq \pi_{V}(V)$. By \cite[Theorem 2.29]{Shafarevich1} we can find a non-empty Zariski open subset $B\subseteq B_0$ such that for every $\mathbf{z} $ in $ B$ , $\pi_{V}$ is unramified over $\mathbf{z}$, meaning that $\mathbf{z}$ has exactly $D$ preimages under $\pi_{V}$. In order to prove $(1)$, we have to check that $B\cap \mathbb{R}^d$ contains a non-empty Zariski open subset of $ \mathbb{R}^d$. But this is standard; since the complement of $B$ in $\mathbb{C}^{d}$ is a  Zariski closed proper subset, it must be equal to the set of zeros of a finite number of non-zero polynomials $Q_1,\ldots,Q_s$ in $ \mathbb{C}[X_1,\ldots,X_n,Y_{2n-d+1},\ldots,Y_{n}]$. We have $\mathbb{C}^d\setminus V(Q_1)\subseteq B$. Since $Q_1$ is non-zero and $ \mathbb{R}$ is infinite, there  exists $\mathbf{z}_0$ in $\mathbb{R}^d$ with $Q_1(\mathbf{z}_0)\neq 0$, hence $  \mathbb{R}^d\setminus V(Q_1)$ is a non-empty Zariski open subset of $ \mathbb{R}^d$ contained in $B$. This proves $(1)$.\\
%
%
%
%
In order to prove (2), let $\mathbf{w}_0$ be a point in $B$ (hence $\mathbf{w}_0\in B_0$) with first $n$ coordinates equal to $x_1,\ldots,x_n$.  Note that, starting with the conditions $X_i=x_i$ for $i$ in $ \{1,\ldots,n\}$ we can solve \eqref{eq:eqtriangularform} for each $i$ in $ \{1,\ldots,2n-d\}$ as a system of equations on the variables $Y_1,\ldots,Y_{2n-d}$, obtaining at most $D$ different solutions $(y_1,\ldots,y_{2n-d})$ in $\mathbb{C}^{2n-d}$. Moreover, we get exactly $D$ different solutions if and only if each partial derivative $\frac{\partial p_i}{\partial Y_i}$, for $i$ in $\{1,\ldots,2n-d\}$, does not vanish at any point of the form $(x_1,\ldots,x_n,y_1,\ldots,y_{2n-d})$ with $(y_1,\ldots,y_{2n-d})$ a solution of the system. Since $\mathbf{w}_0$ has exactly $D$ preimages under $\pi_{V}$, we conclude
$$\det\left(\frac{\partial p_i}{\partial Y_j}(\mathbf{z}_0)\right)_{i,j=1,\ldots,2n-d}=\prod_{i=1}^{2n-d}\frac{\partial p_i}{\partial Y_i}(\mathbf{z}_0) \neq 0,$$
for every $\mathbf{z}_0$ in $ \pi_{V}^{-1}(\mathbf{w}_0)$. Hence, for every point $(z_1,\ldots,z_{2n})$ in $ \pi_{V}^{-1}(\mathbf{w}_0)$ we can apply the Implicit Function Theorem to the map $F:\mathbb{C}^{3n-d}\to \mathbb{C}^{2n-d}$ given by
$$F(w_1,\ldots,w_{3n-d}):=(p_1(w_1,\ldots,w_{n+1}),\ldots,p_{2n-d}(w_1,\ldots,w_{3n-d})),$$
at the point $(z_1,\ldots,z_{3n-d})$. This way, we get the existence of Euclidean neighbourhoods $U_1,U_2$ of $(z_1,\ldots,z_n)$ in $\mathbb{C}^n$ and of $(z_{n+1},\ldots,z_{3n-d})$ in $ \mathbb{C}^{2n-d}$, respectively, and a holomorphic function $H=(H_1,\ldots,H_{2n-d}):U_1\to U_2$ such that
$$ V\cap (U_1\times U_2\times \mathbb{C}^{d-n})=\{(\mathbf{w}_1,H(\mathbf{w}_1),\mathbf{w}_2)):\mathbf{w}_1\in U_1,\mathbf{w}_2\in \mathbb{C}^{d-n}\}$$
if $d>n$, and
$$ V\cap (U_1\times U_2)=\{(\mathbf{w}_1,H(\mathbf{w}_1)):\mathbf{w}_1\in U_1\}$$
if $d=n$. This proves (2). Now, let $J$ be a finite subset of $\mathbb{C}^{2n-d}$ such that $V$ is not contained in $\mathbb{C}^n\times J \times \mathbb{C}^{d-n}$ and choose a point  $\mathbf{u}_0$ in $ \pi_V^{-1}(B\cap \mathbb{R}^d)$. Let $U_1,U_2$ and $H=(H_1,\ldots,H_{2n-d})$ be as in part (2) with $(z_i)_{i=1}^{2n}=\mathbf{u}_0$. By shrinking $U_1$ if necessary, we can assume that it is connected. We claim that
$$\{H(\mathbf{z}):\mathbf{z}\in U_1\cap \mathbb{R}^n\}\not \subseteq J.$$
Indeed, assume that this is not the case. Since $J$ is finite, $H(z)$ is holomorphic and $U_1$ is connected, we get $H(U)=\{\mathbf{s}\}$ for some $\mathbf{s}$ in $J$ (see e.g.~\cite[p.~21]{shabat}), hence
$$ V\cap (U_1\times U_2\times \mathbb{C}^{d-n})=U_1\times \{\mathbf{s}\}\times \mathbb{C}^{d-n}\subseteq V\cap (\mathbb{C}^n \times \{\mathbf{s}\}\times \mathbb{C}^{d-n}).$$ 
This implies, by Corollary \ref{cor:generic_contention}, that $V$ is contained in $\mathbb{C}^n \times \{\mathbf{s}\}\times \mathbb{C}^{d-n}$, which contradicts our hypothesis. This proves our claim. It follows that we can find $\mathbf{v}_0$ in $ U_1\cap \mathbb{R}^n$ and $i_0$ in $ \{1,\ldots,2n-d\}$ such that $H_{i_0}(\mathbf{v}_0)\not \in J$. Take an Euclidean open set $U_1'\subseteq U_1$ such that $\mathbf{v}_0\in U'_1$ and $H_{i_0}(\mathbf{w})\not \in J$ for every $\mathbf{w}$ in $U'_1$. It is clear that $U_1'$ satisfies the desired properties. This proves part (3).\\
In order to prove (4), let $\mathbf{a}$ be a point in $B\cap \mathbb{R}^d$, let $U$ be an Euclidean neighbourhood of $\mathbf{a}$ in $\mathbb{C}^d$ and choose a point $\mathbf{t}_0$ in $\pi_V^{-1}(\mathbf{a})$.  Let $U_1,U_2$ and $H=(H_1,\ldots,H_{2n-d})$ be given by part (2) with $(z_i)_{i=1}^{2n}=\mathbf{t}_0$. By shrinking $U_1$, if necessary, we can assume that in the case $d>n$ there exists an open subset $U_3$ of $\mathbb{C}^{d-n}$ such that $$V\cap (U_1\times U_2\times U_3)\subseteq \pi_V^{-1}(U),$$
and that in the case $d=n$ we have
$$V\cap (U_1\times U_2)\subseteq \pi_V^{-1}(U).$$
Let $U_1'$ be the open subset of $ U_1$ given by  part (3). If $d>n$, fix a point $(\alpha_1,\ldots,\alpha_{d-n})$ in $U_3$. 
Since $ U_1'\cap \mathbb{R}^n\neq \emptyset$ and $j(\mathbb{H})=\mathbb{C}$, we can apply Proposition \ref{prop:masser} to the system of equations
\begin{equation}\label{systemeq}
\begin{array}{ccc}
    j(w_{1}) &=& H_1(w_{1},\ldots,w_n), \\
 &\vdots& \\
j(w_{2n-d}) &=& H_{2n-d}(w_{1},\ldots,w_n),\\
j(w_{2n-d+1}) &=& \alpha_1,\\
& \vdots & \\
j(w_{n}) &=& \alpha_{d-n},
\end{array}
\end{equation}
for $(w_1,\ldots,w_n)$ in $ U_1'\cap \mathbb{H}^n$, where only the first $n$ equations should be considered in the case $d=n$. For each solution $\mathbf{w}=(w_1,\ldots,w_n)$ of this system of equations we have that
$$(\mathbf{w},j(\mathbf{w}))=(w_1,\ldots,w_n,H_1(\mathbf{w}),\ldots,H_{2n-d}(\mathbf{w}),\alpha_1,\ldots,\alpha_{d-n})$$
if $d>n$, and
$$(\mathbf{w},j(\mathbf{w}))=(w_1,\ldots,w_n,H_1(\mathbf{w}),\ldots,H_{2n-d}(\mathbf{w}))$$
if $d=n$. It follows that $(\mathbf{w},j(\mathbf{w}))$ is in $E_j^n\cap \pi_{V}^{-1}(U)\cap (\mathbb{C}^{2n}\setminus (\mathbb{C}^n\times J \times \mathbb{C}^{d-n}))$ for every solution $\mathbf{w}$. This proves (4) since the system \eqref{systemeq} has infinitely many solutions in $U'_1 \cap \mathbb{H}^n$. \\
Finally, taking $U=B$ in part (4), and noting that $\pi_V^{-1}(B)\subseteq \pi_V^{-1}(\pi_V(V_0))=V_0$, we conclude that  $E_j^n\cap V_0\cap (\mathbb{C}^{2n}\setminus (\mathbb{C}^n\times J \times \mathbb{C}^{d-n}))$ is infinite. This completes the proof of the proposition.
\end{proof}




\begin{cor}
\label{cor:dense1}
Let $V\subseteq\mathbb{C}^{2n}$ be an  algebraic variety of triangular form of dimension $d\geq n$. 
Then $\pi_{V}\left(E_{j}^{n}\cap V_0\right)$ is Zariski dense in $\mathbb{C}^{d}$.
\end{cor}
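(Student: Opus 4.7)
The plan is to argue by contradiction. Suppose that $\pi_V(E_j^n\cap V_0)$ is not Zariski dense in $\mathbb{C}^d$, so there exists a non-zero polynomial $P\in\mathbb{C}[Z_1,\ldots,Z_d]$ vanishing on $\pi_V(E_j^n\cap V_0)$. The strategy is to use Proposition \ref{prop:conj1fortriangularform} (with $J=\emptyset$) to produce a Euclidean open set in $\mathbb{C}^d$ on which $\pi_V(E_j^n\cap V_0)$ is forced to have points while $P$ is nowhere zero, yielding a contradiction.

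First, I would invoke part (1) of Proposition \ref{prop:conj1fortriangularform} to obtain the non-empty Zariski open set $B\subseteq\mathbb{C}^d$ whose intersection with $\mathbb{R}^d$ contains a non-empty Zariski open subset of $\mathbb{R}^d$. Since any non-zero polynomial in $\mathbb{C}[Z_1,\ldots,Z_d]$ is not identically zero on a non-empty Euclidean open subset of $\mathbb{R}^d$, the non-vanishing locus of $P$ must meet $B\cap\mathbb{R}^d$. I would pick any such point $\mathbf{a}$, and then, by continuity of $P$ and the Euclidean-openness of $B$, choose a Euclidean neighbourhood $U_0$ of $\mathbf{a}$ with $U_0\subseteq B$ on which $P$ does not vanish.

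Next, I would apply part (4) of Proposition \ref{prop:conj1fortriangularform} (still with $J=\emptyset$) to $\mathbf{a}$ and $U_0$, obtaining that $E_j^n\cap\pi_V^{-1}(U_0)$ is infinite. The definition of the set $B_0\supseteq B$ guarantees $\pi_V^{-1}(B)\subseteq V_0$ (any point of $V$ whose $\pi_V$-image lies in $B_0$ automatically satisfies the non-vanishing conditions $q_{i,d_i}(x_1,\ldots,x_n)\neq 0$ defining $V_0$). Consequently, every point of $E_j^n\cap\pi_V^{-1}(U_0)$ belongs to $E_j^n\cap V_0$, and its $\pi_V$-image lies in $U_0$. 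Hence $\pi_V(E_j^n\cap V_0)$ contains at least one point of $U_0$, where $P$ does not vanish, contradicting our assumption on $P$.

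The only step requiring genuine verification is the real existence argument producing $\mathbf{a}$, but this is immediate from part (1) of Proposition \ref{prop:conj1fortriangularform} together with the Zariski density of $\mathbb{R}^d$ in $\mathbb{C}^d$. Every other ingredient is already packaged inside Proposition \ref{prop:conj1fortriangularform}, which does the analytic and geometric heavy lifting; the corollary is essentially a formal consequence stating Zariski density in the language of polynomials.
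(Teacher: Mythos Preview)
Your proof is correct and follows essentially the same approach as the paper: both arguments invoke parts (1) and (4) of Proposition~\ref{prop:conj1fortriangularform} with $J=\emptyset$, together with the observation $\pi_V^{-1}(B)\subseteq V_0$, to exhibit points of $\pi_V(E_j^n\cap V_0)$ in any prescribed Zariski open neighbourhood of a real point of $B$. The only cosmetic difference is that you phrase it as a contradiction against a hypothetical vanishing polynomial $P$, whereas the paper argues directly that every point of a Zariski-dense subset $A\subseteq B\cap\mathbb{R}^d$ lies in the Zariski closure of $\pi_V(E_j^n\cap V_0)$.
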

\begin{proof}
Let $B$ be the non-empty Zariski open subset of $\mathbb{C}^d$ contained in $\pi_V(V_0)$ given by Proposition \ref{prop:conj1fortriangularform} and let $A$ be a non-empty Zariski open subset of $B\cap \mathbb{R}^d$.  Let $\mathbf{a}$ be a point in $A$ and let $\mathcal{U}$ be a Zariski open subset of $\mathbb{C}^d$ contained in $B$ with $\mathbf{a} \in \mathcal{U}$. Since $\mathcal{U}$ is also an open subset of $\mathbb{C}^d$ in the Euclidean topology, we can use  Proposition \ref{prop:conj1fortriangularform}(4) with $J=\emptyset$ and find a point $\mathbf{z}$ in $\pi_{V}^{-1}(\mathcal{U})\cap E_j^n$. Since $\pi_{V}^{-1}(\mathcal{U})\subseteq \pi_{V}^{-1}(B)\subseteq V_0$, we have $\mathbf{z}\in V_0$. This implies that $\pi_{V}(E_j^n\cap V_0)\cap \mathcal{U}$ is non-empty. Thus, $A$ is contained in the Zariski closure of $\pi_{V}(E_j^n\cap V_0)$. Since $A$ is Zariski dense in $\mathbb{R}^d$ and $\mathbb{R}^{d}$ is Zariski dense in $\mathbb{C}^{d}$, we have that $A$ is Zariski dense in $\mathbb{C}^d$. This implies that $\pi_{V}(E_j^n\cap V_0)$ is Zariski dense in $\mathbb{C}^d$ and completes the proof of the corollary. 
\end{proof}

\section{Proof of Theorem \ref{th:main1}}\label{sec:proof_of_thm_1}

The proof of Theorem \ref{th:main1} is based on Corollary \ref{cor:dense1} and the following result.

\begin{prop}
\label{prop:dense2}
Let $V\subseteq \mathbb{C}^{2n}$ be an algebraic variety and let $\pi:\mathbb{C}^{2n}\rightarrow\mathbb{C}^{n}$ be the projection onto the first $n$ coordinates. If $\pi(V)$ is Zariski dense in $\mathbb{C}^{n}$, then there exists an algebraic variety $W\subset \mathbb{C}^{2n}$ of dimension $n$ of triangular form  with  $W\subseteq V$. 
\end{prop}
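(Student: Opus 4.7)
The plan is: reduce to the case when $V$ is irreducible with dominant $\pi$, then, after a generic ``slice'' that brings the dimension down to $n$, produce the triangular defining polynomials inductively from the tower of minimal polynomials of the coordinate functions.

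Since $\pi(V)=\bigcup_k\pi(V_k)$ over the irreducible components of $V$, Zariski density of $\pi(V)$ forces some component $V_k$ to have Zariski dense projection; replacing $V$ by such a $V_k$, we may assume $V$ is irreducible of dimension $d\geq n$ with $\pi$ dominant. Write $\xi_i,\eta_j$ for the images of $X_i,Y_j$ in $\mathbb{C}[V]$, set $K:=\mathbb{C}(\xi_1,\ldots,\xi_n)$, and note that $\mathbb{C}(V)/K$ has transcendence degree $d-n$. Choose a subset $S\subseteq\{1,\ldots,n\}$ of size $d-n$ such that $\{\eta_j:j\in S\}$ is a transcendence basis of $\mathbb{C}(V)/K$. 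The map $V\to\mathbb{C}^n\times\mathbb{C}^{d-n}$, $(x,y)\mapsto(x,(y_j)_{j\in S})$, is then dominant between two varieties of dimension $d$, so by the standard argument on generic fibers in characteristic zero, for a sufficiently generic $c=(c_j)_{j\in S}\in\mathbb{C}^{d-n}$ the variety $V\cap\bigcap_{j\in S}\{Y_j=c_j\}$ has an irreducible component $V'$ of dimension exactly $n$ on which $\pi$ is still dominant. In particular, every $\eta_i$ becomes algebraic over $K$ in $\mathbb{C}[V']$, and $\eta_i=c_i$ for $i\in S$.

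Now build $p_1,\ldots,p_n$ by processing $i=1,\ldots,n$ in order, using the tower $K\subset K(\eta_1)\subset\cdots\subset K(\eta_1,\ldots,\eta_n)=\mathbb{C}(V')$. For $i\in S$ take $p_i:=Y_i-c_i$, so $d_i:=1$ and the leading coefficient is $1\in\mathbb{C}[X]$. For $i\notin S$, let $m_i(T)\in K(\eta_1,\ldots,\eta_{i-1})[T]$ be the monic minimal polynomial of $\eta_i$ over $K(\eta_1,\ldots,\eta_{i-1})$ in $\mathbb{C}(V')$, of degree $d_i$. The key observation is that each intermediate field $K(\eta_1,\ldots,\eta_{i-1})$ is algebraic over $K$, so it coincides with $K[\eta_1,\ldots,\eta_{i-1}]$, and each coefficient of $m_i$ can therefore be written as a polynomial in the $\eta_\ell$'s with coefficients in $K$. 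Clearing $K$-denominators by a common multiplier $t_i\in\mathbb{C}[X]$ and lifting $\eta_\ell\mapsto Y_\ell$ produces a polynomial $p_i\in\mathbb{C}[X,Y_1,\ldots,Y_i]$ of the form $p_i=t_iY_i^{d_i}+(\text{lower-order terms in }Y_i)$, vanishing at $(\xi,\eta)$ and with leading coefficient $t_i\in\mathbb{C}[X]$. By construction, condition~(i) of triangular form holds.

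Finally, set $W:=V(p_1,\ldots,p_n)$; by construction $V'\subseteq W$. Over the Zariski open subset $U\subseteq\mathbb{C}^n$ where every $t_i$ is nonzero, a direct ``triangular B\'ezout'' count shows that the system has at most $\prod_i d_i$ solutions in each fiber of $\pi$, while the tower formula yields $\deg(\pi_{V'})=[\mathbb{C}(V'):K]=\prod_i d_i$, so $V'$ already contributes that many points in each such fiber. Combined with $V'\subseteq W$, this forces $W=V'$ over $\pi^{-1}(U)$. The remaining technical step is to rule out spurious components of $W$ lying over the vanishing locus of the $t_i$'s; this is the main obstacle, and I expect it to be handled by a combination of Krull's height theorem and the genericity of $c$. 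Once dealt with, $W=V'$ globally, so $W$ is irreducible of dimension $n$, contained in $V$, and satisfies $\deg(\pi_W)=\prod_i d_i$, giving condition~(ii) as well and showing that $W$ is of triangular form.
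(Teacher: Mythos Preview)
Your overall strategy---reduce to an irreducible $V'$ of dimension $n$ on which $\pi$ is dominant, then read off triangular polynomials from the tower of minimal polynomials in $\mathbb{C}(V')$---parallels the paper's, but there is a genuine gap at exactly the point you flag. You need $W=V(p_1,\ldots,p_n)$ to be \emph{irreducible} (this is part of the definition of ``triangular form''), yet you only establish $W=V'$ over the open set where all the $t_i$ are nonzero, and your proposed fix via Krull's height theorem and genericity of $c$ does not rule out extra components over the boundary. Such components really can appear: for $n=2$, take $V'=V(X_1Y_1-X_2,\,Y_1-Y_2)$. The minimal polynomial of $\eta_2$ over $K(\eta_1)$ is $T-\eta_1$; if in clearing denominators you represent the constant term as $-\xi_2/\xi_1\in K$ rather than as $-\eta_1\in K[\eta_1]$ (your recipe does not forbid this), you obtain $p_1=X_1Y_1-X_2$, $p_2=X_1Y_2-X_2$, and then
\[
W=V(p_1,p_2)=V'\,\cup\,\{X_1=X_2=0\}
\]
is reducible with a spurious $2$-dimensional component lying over $\{t_1=0\}$. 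So the fiber-counting argument, as written, does not close the gap.

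The paper avoids the fiber-counting route entirely. Rather than working over $\mathbb{C}$, it passes to the small algebraically closed field $\overline{F}$ generated by the coefficients of defining polynomials of $V$, picks $\mathbf{x}_0\in\pi(V)$ with $\mathrm{tr.deg.}_F F(\mathbf{x}_0)=n$, and then a point $\mathbf{y}_0$ with $(\mathbf{x}_0,\mathbf{y}_0)\in V$ whose coordinates are algebraic over $\overline{F}(\mathbf{x}_0)$. The triangular polynomials $g_i$ are chosen as \emph{primitive irreducible} lifts of the successive minimal polynomials of the $y_i$, and irreducibility of $W=V(g_1,\ldots,g_n)$ is proved directly by showing the ideal is prime: since $\overline{F}[X]\cong\overline{F}[\mathbf{x}_0]$, one gets an embedding
\[
\overline{F}[X,Y]/(g_1,\ldots,g_n)\;\hookrightarrow\;\overline{F}(\mathbf{x}_0)[Y]/(\alpha_1,\ldots,\alpha_n)\;\cong\;\overline{F}(\mathbf{x}_0)[y_1,\ldots,y_n],
\]
the last ring being a field. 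This delivers irreducibility, dimension $n$, and the degree condition $\deg(\pi_W)=\prod_i d_i$ at once, with no need to analyse degenerate fibers. The containment $W\subseteq V$ then comes from Lemma~\ref{lem:generic}, using that $(\mathbf{x}_0,\mathbf{y}_0)$ is generic in $W$ over $\overline{F}$. Your framework could be repaired along these lines---the function-field isomorphism $\mathbb{C}[X]\cong\mathbb{C}[\xi]$ plays exactly the role of $\overline{F}[X]\cong\overline{F}[\mathbf{x}_0]$---but the argument as it stands is incomplete.
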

\begin{proof}
Let $p_1,\ldots,p_m$ be polynomials in $\mathbb{C}[X_1,\ldots,X_n,Y_1,\ldots,Y_n]$ defining  $V$  and let $F$ be the field generated by the coefficients of $p_1,\ldots,p_m$. Since the set $\pi(V)$ is dense in $\mathbb{C}^{n}$, it  contains a Zariski open subset of $\mathbb{C}^n$ (see, e.g~\cite[Theorem 1.14]{Shafarevich1}).
Since  $\text{tr.deg.}_{\mathbb{Q}}(\mathbb{C})$ is infinite, we can find a point $\mathbf{x}_0=(x_1,\ldots,x_n)$ in  $\pi (V)$ such that $\mathrm{tr.deg.}_{F}(F(\mathbf{x}_0))=n$. Let $L$ denote the algebraic closure of $F(\mathbf{x}_0)$ in $\mathbb{C}$ and for each $i$ in $\{1,\ldots,m\}$ consider the polynomials $q_i(Y_1,\ldots,Y_n):=p_i(\mathbf{x}_0,Y_1,\ldots,Y_n)$ in $ L[Y_1,\ldots,Y_n]$. If the set
$$V_L(q_1,\ldots,q_m):=\{\mathbf{y}\in L^n:q_i(\mathbf{y})=0 \text{ for every }i\text{ in } \{1,\ldots,m\}\}$$
were empty, then the ideal of $L[Y_1,\ldots,Y_n]$ generated by $q_1,\ldots,q_m$ would contain $1$. But this would imply that the algebraic subset of $\mathbb{C}^n$ defined by the polynomials $q_1,\ldots,q_m$ is empty, contradicting the fact that $\mathbf{x}_0\in \pi(V)$. Hence, we can choose a point $\mathbf{y}_0=(y_1,\ldots,y_n)$ in $V_L(q_1,\ldots,q_n)$, meaning that $\mathbf{y}_0\in L^n$ and $(\mathbf{x}_0,\mathbf{y}_0)\in V$.\\
Put $R:= \overline{F}[X_1,\ldots,X_n]$. The minimal polynomial of $y_{1}$ over $\overline{F}(\mathbf{x}_0)$ is of the form $a_1 g_1(\mathbf{x}_0,Y_1)$ where $g_1(X_1,\ldots,X_n,Y_1)$ is a primitive irreducible polynomial in $R[Y_1]$ of positive degree and $a_1$ is a non-zero element of $\overline{F}(\mathbf{x}_0)$. Similarly, for every $i$ in $\left\{1,\ldots,n\right\}$ with $i>1$  the minimal polynomial of $y_{i}$ over $\overline{F}(\mathbf{x}_0,y_1,\ldots,y_{i-1})$ is of the form $a_i g_i(\mathbf{x}_0,y_1,\ldots,y_{i-1},Y_i)$ where $g_i(X_1,\ldots,X_n,Y_1,\ldots,Y_i)$ is a primitive  irreducible polynomial in $R_i[Y_i]$ of positive degree, where $R_i:=\overline{F}[X_1,\ldots,X_n,Y_1,\ldots,Y_{i-1}]$, whose leading coefficient is in $R$, and $a_i$ is a non-zero element of $\overline{F}(\mathbf{x}_0)$. \\
Let $W$ denote the algebraic subset $V(g_{1}, \ldots g_n)$ of $\mathbb{C}^{2n}$. It satisfies condition (i) from the definition of triangular form by construction. Note that $(\mathbf{x}_0,\mathbf{y}_0)\in V\cap W$ and $\mathrm{tr.deg.}_{\overline{F}}(\overline{F}(\mathbf{x}_0,\mathbf{y}_0))=n$. We claim that $W\subseteq V$. By Lemma \ref{lem:generic}, it is enough to check that $W\cap \overline{F}^{2n}$  is irreducible and of dimension $n$ as algebraic subvariety of $\overline{F}^{2n}$. First, note that the ring isomorphism $\overline{F}[X_1,\ldots,X_n,Y_1,\ldots,Y_n]\to \overline{F}[\mathbf{x}_0,Y_1,\ldots,Y_n]$ given by $f(X_1,\ldots,X_n,Y_1,\ldots,Y_n)\mapsto f(\mathbf{x}_0,Y_1,\ldots,Y_n)$ induces an injective ring morphism
\begin{equation}\label{eq:embedding}
  \overline{F}[X_1,\ldots,X_n,Y_1,\ldots,Y_n]/(g_1,\ldots,g_n)
\hookrightarrow 
\overline{F}(\mathbf{x}_0)[Y_1,\ldots,Y_n]/(\alpha_{1},\ldots,\alpha_{n}),  
\end{equation}
where $\alpha_{i}(Y_1,\ldots, Y_i):=g_i(\mathbf{x}_0,Y_1,\ldots,Y_i)$. Since replacing $Y_1$ by $y_1$ gives a ring isomorphism $\overline{F}(\mathbf{x}_0)[Y_1,\ldots,Y_n]/(\alpha_1)\simeq \overline{F}(\mathbf{x}_0,y_1)[Y_2,\ldots,Y_n]$, we have
$$\overline{F}(\mathbf{x}_0)[Y_1,\ldots,Y_n]/(\alpha_{1},\ldots,\alpha_{n})\simeq \frac{\overline{F}(\mathbf{x}_0)[Y_1,\ldots,Y_n]/(\alpha_{1})}{(\alpha_{1},\ldots,\alpha_{n})/(\alpha_1)}\simeq \overline{F}(\mathbf{x}_0,y_1)[Y_2,\ldots,Y_n]/(\beta_2,\ldots,\beta_n),$$
where $\beta_i(Y_2,\ldots,Y_i):=g_i(\mathbf{x}_0,y_1,Y_2,\ldots,Y_n)$ for $i$ in $\{2,\ldots,n\}$ provided $n\geq 2$. Repeating this argument we obtain that
$$\overline{F}(\mathbf{x}_0)[Y_1,\ldots,Y_n]/(\alpha_{1},\ldots,\alpha_{n})\simeq \overline{F}(\mathbf{x}_0)[y_1,\ldots,y_n].$$
By \eqref{eq:embedding} we conclude that $\overline{F}[X_1,\ldots,X_n,Y_1,\ldots,Y_n]/(g_1,\ldots,g_n)$ is an integral domain. Hence, the polynomials $g_1,\ldots,g_n$ generate a prime ideal of $\overline{F}[X_1,\ldots,X_n,Y_1,\ldots,Y_n]$ and therefore $W\cap \overline{F}^{2n}$ is irreducible in $\overline{F}^{2n}$. Note that the image of the ring morphism \eqref{eq:embedding} equals $\overline{F}[\mathbf{x}_0,Y_1,\ldots,Y_n]/(\alpha_{1},\ldots,\alpha_{n})$, hence the field of fractions of $ \overline{F}[X_1,\ldots,X_n,Y_1,\ldots,Y_n]/(g_1,\ldots,g_n)$ is isomorphic to $\overline{F}(\mathbf{x}_0)[y_1,\ldots,y_n]$. Since this field has transcendence degree over $\overline{F}$ equal to $n$, we conclude that $W\cap \overline{F}^{2n}$ has dimension $n$. This completes the proof of the contention $W\subseteq V$. \\
%
Now, consider the morphism $\pi_{W}|_{W\cap \overline{F}^{2n}}:W\cap \overline{F}^{2n}\to \overline{F}^d$ obtained by restricting $\pi_{W}$ to $W\cap \overline{F}^{2n}$. Its degree equals
$$\left[\overline{F}(\mathbf{x}_0)[y_1,\ldots,y_n]:\overline{F}(\mathbf{x}_0)\right]=\prod_{i=i}^n \left[\overline{F}(\mathbf{x}_0)[y_1,\ldots,y_i]:\overline{F}(\mathbf{x}_0)[y_1,\ldots,y_{i-1}]\right],$$
where $\overline{F}(\mathbf{x}_0)[y_1,\ldots,y_{i-1}]$ is defined as $\overline{F}(\mathbf{x}_0)$ when $i=1$. By construction, we have $\left[\overline{F}(\mathbf{x}_0)[y_1,\ldots,y_i]:\overline{F}(\mathbf{x}_0)[y_1,\ldots,y_{i-1}]\right]= \mathrm{deg}_{Y_i}(g_i)$ for every $i$ in $\{1,\ldots,n\}$, hence  $\pi_{W}|_{W\cap \overline{F}^{2n}}$ has degree $D:=\prod_{i=1}^n \mathrm{deg}_{Y_i}(g_i)$. Since $\overline{F}$ is algebraically closed, we have that $W$ is also irreducible in $\mathbb{C}^{2n}$ (see, e.g.~\cite[Exercise II.3.15]{hartshorne}) and $\mathrm{deg}(\pi_W)=D$. This proves that $W$ satisfies condition (ii) from the definition of triangular form and completes the proof of the proposition.

\end{proof}

\begin{proof}[Proof of Theorem \ref{th:main1}]
By Proposition \ref{prop:dense2} there exists   an algebraic variety of triangular form $W\subseteq \mathbb{C}^{2n}$ of dimension $n$ contained in $V$. By Corollary \ref{cor:dense1} we have that $\pi_{W}(E_j^n\cap W_0)$ is Zariski dense in $\mathbb{C}^n$. This implies the desired result since $\pi_{W}(E_j^n\cap W_0)\subseteq \pi(E_j^n\cap V)$.
\end{proof}

\section{Proof of Theorem \ref{th-main-2}}\label{sec:proof_of_thm_2}

The proof of Theorem \ref{th-main-2} can be found at the end of this section after a few intermediate results.

\subsection{Avoiding special points}

Here we prove that, in a given algebraic variety $V$ of triangular form, the number of $\Gamma$-orbits of the coordinates of special points lying in $V_0$ is bounded. This is Proposition \ref{prop:special} below. First, we recall an elementary technical lemma.

Given a finitely generated field extension $L\supseteq K$, we define $[L:K]_{\mathrm{alg}}$ as the smallest positive integer $n$ for which there exists a field $K_0$ satisfying that $L\supseteq K_0\supseteq K$, $[L:K_0]=n$ and $K_0$ is purely transcendental over $K$.

\begin{lem}\label{lem:bound-degree}
Let $C$ be a finite subset of $\mathbb{C}$, let $x$ be an element of $\mathbb{C}$ and let $M$ be a positive integer. Then, there exists a positive integer $M'$ depending only on $M$ and on  $[\mathbb{Q}(C,x): \mathbb{Q}(x)]_{\mathrm{alg}}$ such that for every 
$y$ in $\mathbb{C}$ that is algebraic over $\mathbb{Q}(x)$ and satisfies  $[\mathbb{Q}(C,x,y):\mathbb{Q}(C,x)]\leq M$  we have $[\mathbb{Q}(x,y):\mathbb{Q}(x)]\leq M'$. 
\end{lem}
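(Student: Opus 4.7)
The plan is to set $M' := M \cdot d$, where $d := [\mathbb{Q}(C,x):\mathbb{Q}(x)]_{\mathrm{alg}}$, and obtain the bound via a straightforward tower-of-fields argument combined with linear disjointness of purely transcendental and algebraic extensions.

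By the definition of $[\,\cdot\,:\,\cdot\,]_{\mathrm{alg}}$, I would start by fixing an intermediate field $K_0$ with $\mathbb{Q}(x)\subseteq K_0\subseteq \mathbb{Q}(C,x)$ such that $K_0/\mathbb{Q}(x)$ is purely transcendental and $[\mathbb{Q}(C,x):K_0]=d$. Multiplicativity of field degrees in the tower $K_0\subseteq \mathbb{Q}(C,x)\subseteq \mathbb{Q}(C,x,y)$ then gives
\[
[\mathbb{Q}(C,x,y):K_0] = [\mathbb{Q}(C,x,y):\mathbb{Q}(C,x)]\cdot[\mathbb{Q}(C,x):K_0] \leq Md.
\]
Since $K_0(y)\subseteq \mathbb{Q}(C,x,y)$, this immediately yields $[K_0(y):K_0]\leq Md$.

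The second half of the argument is to relate $[K_0(y):K_0]$ to $[\mathbb{Q}(x,y):\mathbb{Q}(x)]$. The key classical fact I will invoke is that a purely transcendental extension $K_0/\mathbb{Q}(x)$ is regular; equivalently, $\mathbb{Q}(x)$ is algebraically closed in $K_0$. Consequently, for any algebraic extension $\mathbb{Q}(x,y)/\mathbb{Q}(x)$ (automatic here, since we assume $y$ is algebraic over $\mathbb{Q}(x)$ and we are in characteristic $0$, so the extension is separable), the fields $K_0$ and $\mathbb{Q}(x,y)$ are linearly disjoint over $\mathbb{Q}(x)$. This gives
\[
[K_0\cdot\mathbb{Q}(x,y):K_0] \;=\; [\mathbb{Q}(x,y):\mathbb{Q}(x)].
\]
Because $x\in K_0$, the compositum $K_0\cdot\mathbb{Q}(x,y)$ coincides with $K_0(y)$, so combining with the previous step one concludes $[\mathbb{Q}(x,y):\mathbb{Q}(x)]=[K_0(y):K_0]\leq Md$, which is the desired bound with $M'=Md$, depending only on $M$ and $d$.

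The routine part is the degree multiplicativity; the only non-trivial input is the linear disjointness / preservation of the minimal polynomial of $y$ when passing from $\mathbb{Q}(x)$ to $K_0$. If one preferred to avoid invoking regularity by name, this step can be done directly: picking algebraically independent generators $t_1,\ldots,t_k$ of $K_0$ over $\mathbb{Q}(x)$, Gauss's lemma applied in the UFD $\mathbb{Q}(x)[t_1,\ldots,t_k]$ (plus specialization $t_i\mapsto 0$ to prevent the leading coefficient from vanishing) shows that the minimal polynomial of $y$ over $\mathbb{Q}(x)$ remains irreducible over $K_0$, giving the equality of degrees. Either way, this is the only step requiring genuine thought, and it is standard.
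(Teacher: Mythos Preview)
Your proof is correct and is essentially the same as the paper's: your intermediate field $K_0$ is precisely the paper's $\mathbb{Q}(t_1,\ldots,t_m,x)$, and your linear-disjointness step is exactly the content of the fact the paper cites from Lang (that $[K(t,y):K(t)]=[K(y):K]$ for $t$ transcendental over $K$), applied inductively. The only cosmetic difference is that you phrase the key step via regularity/linear disjointness whereas the paper invokes the one-variable statement directly.
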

\begin{proof}
For any subfield $K$ of $\mathbb{C}$ containing $x$ and any complex number $t$   that is transcendental over $K$, we have 
$[K(t,y):K(t)] = [K(y):K]$ (see, e.g.~\cite[Chapter VIII, Lemma 4.10]{lang3}). 
Now, let $t_{1},\ldots,t_{m}$ in $ \mathbb{C}$ form a transcendence basis for $\mathbb{Q}(C,x)$ over $\mathbb{Q}(x)$ such that the degree $D=[\mathbb{Q}(C,x):\mathbb{Q}(t_{1},\ldots,t_{m},x)]$ equals $[\mathbb{Q}(C,x): \mathbb{Q}(x)]_{\mathrm{alg}}$. Then, as we have just seen, we have $$[\mathbb{Q}(t_{1},\ldots,t_{m},x,y):\mathbb{Q}(t_{1},\ldots,t_{m},x)] = [\mathbb{Q}(x,y):\mathbb{Q}(x)].$$ 
Put $M'=MD$ and let $\mathbf{t} = (t_{1},\ldots,t_{m})$. By the multiplicative property of the degree of field extensions, we get
\begin{eqnarray*}
\left[\mathbb{Q}\left(\mathbf{t},x,y\right):\mathbb{Q}\left(\mathbf{t},x\right)\right] &\leq &
 \left[\mathbb{Q}(C,x,y):\mathbb{Q}\left(\mathbf{t},x\right)\right]\\
 &=& \left[\mathbb{Q}(C,x,y):\mathbb{Q}(C,x)\right]\cdot \left[\mathbb{Q}(C,x):\mathbb{Q}\left(\mathbf{t},x\right)\right]\\
 &\leq & M\cdot D\\
 &=& M'.
\end{eqnarray*}
This proves the  lemma.
\end{proof}

\begin{prop}
\label{prop:special}
Let $V\subseteq\mathbb{C}^{2n}$ be a variety of triangular form of dimension $n$. Then there is a finite set $S\subset\mathbb{H}$ of special points, such that for every special point $\mathbf{z}$ in $\mathbb{H}^{n}$ with $\left(\mathbf{z},j(\mathbf{z})\right)\in V_{0}$, we have that the coordinates of $\mathbf{z}$ are in $\Gamma\cdot S$. 
\end{prop}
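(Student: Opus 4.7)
Let $K$ be a finitely generated subfield of $\mathbb{C}$ over which $V$ is defined, and fix a finite generating set $C\subset K$ for $K$ over $\mathbb{Q}$ containing a transcendence basis $\mathbf{t}=(t_{1},\ldots,t_{m})$ of $K$ over $\mathbb{Q}$; write $D_{0}:=[K:\mathbb{Q}(\mathbf{t})]$ and $D:=d_{1}\cdots d_{n}$, which are constants depending only on $V$. The plan is to exhibit a constant $M'$, depending only on $V$ and $n$, such that for every $i\in\{1,\ldots,n\}$ and every special $\mathbf{z}\in\mathbb{H}^{n}$ with $(\mathbf{z},j(\mathbf{z}))\in V_{0}$ one has $[\mathbb{Q}(z_{i},j(z_{i})):\mathbb{Q}(z_{i})]\leq M'$. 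Since this degree equals the class number $h(\mathcal{O}_{z_{i}})$, Lemma~\ref{lem:class_number} will then produce, for each $i$, a finite set $S_{i}\subset\mathbb{H}$ of special points such that $z_{i}\in\Gamma\cdot S_{i}$, and the proposition will follow by taking $S:=\bigcup_{i=1}^{n}S_{i}$.

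The first step uses the triangular form. For each $i$, the equation $p_{i}(\mathbf{z},j(z_{1}),\ldots,j(z_{i}))=0$ displays $j(z_{i})$ as a root of a polynomial in $Y_{i}$ of degree exactly $d_{i}$, since its leading coefficient $q_{i,d_{i}}(\mathbf{z})$ is nonzero on $V_{0}$, with remaining coefficients in $K(\mathbf{z},j(z_{1}),\ldots,j(z_{i-1}))$. Iterating, $[K(\mathbf{z},j(\mathbf{z})):K(\mathbf{z})]\leq D$, and hence $[K(\mathbf{z},j(z_{i})):K(\mathbf{z})]\leq D$ for every $i$.

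The second step descends this bound from $K(\mathbf{z})$ to $\mathbb{Q}(z_{i})$ by means of Lemma~\ref{lem:bound-degree}, which I apply with $C_{i}:=C\cup\{z_{k}:k\neq i\}$ (so that $\mathbb{Q}(C_{i},z_{i})=K(\mathbf{z})$), $x=z_{i}$, $y=j(z_{i})$, and $M=D$; note that $j(z_{i})$ is algebraic over $\mathbb{Q}$ by Schneider's theorem. The main technical point is that the set $C_{i}$ depends on $\mathbf{z}$, so I must show that the quantity $[\mathbb{Q}(C_{i},z_{i}):\mathbb{Q}(z_{i})]_{\mathrm{alg}}$ on which Lemma~\ref{lem:bound-degree} depends is bounded uniformly in $\mathbf{z}$. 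Because each coordinate of $\mathbf{z}$ is algebraic over $\mathbb{Q}$, the transcendence basis $\mathbf{t}$ of $K$ over $\mathbb{Q}$ also serves as a transcendence basis of $K(\mathbf{z})$ over $\mathbb{Q}(z_{i})$, giving
\[
[\mathbb{Q}(C_{i},z_{i}):\mathbb{Q}(z_{i})]_{\mathrm{alg}} \leq [K(\mathbf{z}):\mathbb{Q}(\mathbf{t},z_{i})] \leq [K:\mathbb{Q}(\mathbf{t})]\cdot[\mathbb{Q}(\mathbf{t},\mathbf{z}):\mathbb{Q}(\mathbf{t},z_{i})] = D_{0}\cdot [\mathbb{Q}(\mathbf{z}):\mathbb{Q}(z_{i})],
\]
where the last equality uses that $\mathbb{Q}(\mathbf{t})$ and $\mathbb{Q}(\mathbf{z})$ are linearly disjoint over $\mathbb{Q}$ (one purely transcendental, the other algebraic). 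Since each $z_{k}$ is special and lies in $\mathbb{H}$, it satisfies a nontrivial quadratic equation over $\mathbb{Z}$ but is not rational, so $[\mathbb{Q}(z_{k}):\mathbb{Q}]=2$ for every $k$; hence $[\mathbb{Q}(\mathbf{z}):\mathbb{Q}(z_{i})]\leq 2^{n-1}$, and the required uniform bound $D_{0}\cdot 2^{n-1}$ follows.

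Lemma~\ref{lem:bound-degree} then delivers the uniform $M'$ on $[\mathbb{Q}(z_{i},j(z_{i})):\mathbb{Q}(z_{i})]$, and the argument concludes as described in the first paragraph. The main obstacle is precisely the one just handled: achieving uniform control of the algebraic degree in the second step even though the auxiliary set $C_{i}$ varies with $\mathbf{z}$, which succeeds only because the contribution of the other $z_{k}$'s is controlled by the factor $2^{n-1}$ coming from their being special.
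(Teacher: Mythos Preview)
Your proof is correct and follows essentially the same approach as the paper's: both bound $[K(\mathbf{z},j(z_i)):K(\mathbf{z})]$ by $D$ via the triangular form, then invoke Lemma~\ref{lem:bound-degree} after bounding $[K(\mathbf{z}):\mathbb{Q}(z_i)]_{\mathrm{alg}}$ uniformly using that each special $z_k$ has $[\mathbb{Q}(z_k):\mathbb{Q}]\leq 2$, and finish with Lemma~\ref{lem:class_number}. The only differences are cosmetic (you use the sharper factor $2^{n-1}$ instead of $2^{n}$ and spell out the linear-disjointness step more explicitly).
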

\begin{proof}
Let $p_{1},\ldots,p_{n}$ be polynomials defining $V$ satisfying the conditions of triangular form and let $C_0$ be the set of coefficients of the $p_{i}$'s.  Suppose that $\mathbf{z} = (z_{1},\ldots,z_{n})$ in $ \mathbb{H}^{n}$ is special and satisfies $\left(\mathbf{z},j(\mathbf{z})\right)\in V_{0}$. Put $C:=C_0\cup \{z_1,\ldots,z_n\}$ and $D:=d_1\cdots d_n$. Since $[\mathbb{Q}(\mathbf{z}):\mathbb{Q}]\leq 2^n$, we have $[\mathbb{Q}(C):\mathbb{Q}(z_{i})]_{\mathrm{alg}}\leq 2^n[\mathbb{Q}(C_0):\mathbb{Q}]_{\mathrm{alg}}$ and $[\mathbb{Q}(C,j(z_i)):\mathbb{Q}(C)]\leq D$ for every $i$ in $\{1,\ldots,n\}$. As both $z_{i}$ and $j(z_{i})$ are algebraic over $\mathbb{Q}$ we can apply Lemma \ref{lem:bound-degree} and conclude  that  $[\mathbb{Q}(z_{i},j(z_i)):\mathbb{Q}(z_{i})]$ is bounded above by a number $M$ depending only on $p_{1},\ldots,p_{n}$. By Lemma \ref{lem:class_number} there is a finite subset $S_M$ of $\mathbb{H}$ of special points, depending only on $M$, such that $z_i\in \Gamma \cdot S_M$ for every $i$ in $\{1,\ldots,n\}$. 
\end{proof}

\begin{cor}\label{cor:special_in_triangular_form}
Let $V\subseteq \mathbb{C}^{2n}$ be an  algebraic variety of dimension $n$ of triangular form. Assume that $V$  is not contained in any subvariety of the form $\mathbb{C}^n\times \{j(\mathbf{z})\}$ with $\mathbf{z}$ in $\mathbb{H}^n$ special. Then, $V_0$ contains infinitely many points of the form $\left(\mathbf{z}_0,j(\mathbf{z}_0)\right)$ with  $\mathbf{z}_0$ in $\mathbb{H}^{n}$ not special. \end{cor}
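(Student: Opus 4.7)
The plan is to combine Propositions \ref{prop:conj1fortriangularform} and \ref{prop:special}, using the latter to pin down a finite list of candidate $j$-values of special points in $V_0$, and the former to produce solutions that avoid exactly those values.

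First, I would apply Proposition \ref{prop:special} to obtain a finite set $S\subset \mathbb{H}$ of special points such that every special $\mathbf{z}\in \mathbb{H}^n$ with $(\mathbf{z},j(\mathbf{z}))\in V_0$ has all coordinates in $\Gamma\cdot S$. Since $j$ is $\Gamma$-invariant, I would then consider the finite set
\[
J:=\{(j(s_1),\ldots,j(s_n)):s_1,\ldots,s_n\in S\}\subset \mathbb{C}^n,
\]
which contains $j(\mathbf{z})$ for every such special $\mathbf{z}$.

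The next step is to show that $V\not\subseteq \mathbb{C}^n\times J$. Here I would use that $V$ is irreducible of dimension $n$ while $\mathbb{C}^n\times J$ is a finite union of the irreducible subvarieties $\mathbb{C}^n\times \{\mathbf{w}\}$ for $\mathbf{w}\in J$, so any containment would force $V\subseteq \mathbb{C}^n\times\{\mathbf{w}\}$ for some $\mathbf{w}=(j(s_1),\ldots,j(s_n))$ with $s_i\in S$ special, contradicting the hypothesis on $V$.

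With this $J$ in hand, I would apply Proposition \ref{prop:conj1fortriangularform} (in the case $d=n$, so $\mathbb{C}^n\times J\times \mathbb{C}^{d-n}=\mathbb{C}^n\times J$) to obtain infinitely many points $(\mathbf{z}_0,j(\mathbf{z}_0))$ in $E_j^n\cap V_0$ with $j(\mathbf{z}_0)\notin J$. To finish, I would argue that each such $\mathbf{z}_0$ is non-special: if $\mathbf{z}_0$ were special, Proposition \ref{prop:special} would place every coordinate of $\mathbf{z}_0$ in $\Gamma\cdot S$, and $\Gamma$-invariance of $j$ would then force $j(\mathbf{z}_0)\in J$, contradicting the choice of $\mathbf{z}_0$. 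The most delicate point to state carefully is the irreducibility argument in the third step; everything else is bookkeeping and direct application of the two preceding results.
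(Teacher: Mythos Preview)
Your argument is correct and follows essentially the same route as the paper's proof: apply Proposition~\ref{prop:special} to get a finite set $S$, set $J=j(S)^n$, and invoke the last statement of Proposition~\ref{prop:conj1fortriangularform} to produce infinitely many points of $E_j^n\cap V_0$ with $j(\mathbf{z}_0)\notin J$, which forces $\mathbf{z}_0$ to be non-special. The only difference is that you spell out the irreducibility argument needed to pass from the hypothesis on $V$ to $V\not\subseteq \mathbb{C}^n\times J$, which the paper leaves implicit.
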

\begin{proof}
By Proposition \ref{prop:special}, the set of special points $\mathbf{z}$ in $\mathbb{H}^n$ with $(\mathbf{z},j(\mathbf{z}))\in V_0$ is contained in $(\Gamma\cdot S)^n$ for some finite set $S\subset \mathbb{H}$ of special points.  Put $J:=j(S)^n$. By the last statement in Proposition \ref{prop:conj1fortriangularform} the set
$E_j^n\cap V_0\cap (\mathbb{C}^{2n}\setminus (\mathbb{C}^n\times J))$
is infinite. This proves the desired result since every point in this set has at least one non-special coordinate.
\end{proof}

When working with varieties of triangular form defined over $\overline{\mathbb{Q}}$ and assuming MSC, we can prove stronger versions of Proposition \ref{prop:special} and Corollary \ref{cor:special_in_triangular_form}. 

\begin{prop}
\label{prop:mscspecial}
Let $V\subseteq\mathbb{C}^{2n}$ be a variety of triangular form of dimension $n$ defined over $\overline{\mathbb{Q}}$. Then MSC implies that there is a finite set $S\subset\mathbb{H}$ of special points, such that for every $\mathbf{z}$ in $\mathbb{H}^{n}$ with $\left(\mathbf{z},j(\mathbf{z})\right)\in V_{0}$, we have that the special coordinates of $\mathbf{z}$ are in $\Gamma\cdot S$.
\end{prop}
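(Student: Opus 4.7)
The plan is to follow the strategy of the proof of Proposition \ref{prop:special}, using MSC to replace the trivial algebraic control on the coordinates that was available there because every $z_k$ was special. Fix $\mathbf{z}=(z_1,\ldots,z_n)$ in $\mathbb{H}^n$ with $(\mathbf{z},j(\mathbf{z}))\in V_0$ and a special coordinate $z_i$. By Lemma \ref{lem:class_number} it suffices to bound $[\mathbb{Q}(z_i,j(z_i)):\mathbb{Q}(z_i)]$ by a constant depending only on $V$. I would do so by applying Lemma \ref{lem:bound-degree} with $x=z_i$, $y=j(z_i)$ (algebraic over $\mathbb{Q}(x)$ by Schneider's theorem), and $C:=C_0\cup \{z_1,\ldots,z_n\}$, where $C_0\subseteq \overline{\mathbb{Q}}$ is a finite set containing the coefficients of the triangular polynomials $p_1,\ldots,p_n$ defining $V$. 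From $(\mathbf{z},j(\mathbf{z}))\in V_0$ and the triangular relations $p_k(\mathbf{z},j(z_1),\ldots,j(z_k))=0$ one gets $[\mathbb{Q}(C,z_i,j(z_i)):\mathbb{Q}(C,z_i)]\leq D:=d_1\cdots d_n$, so the remaining task is to bound $[\mathbb{Q}(C_0,\mathbf{z}):\mathbb{Q}(z_i)]_{\mathrm{alg}}$.

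This is the step where MSC enters. Let $r:=\dim_G(\{z_1,\ldots,z_n\}\,|\,\Sigma)$ be the number of distinct $G$-orbits among the non-special coordinates of $\mathbf{z}$, and fix orbit representatives $z_{k_1},\ldots,z_{k_r}$ among them. Because $V$ is defined over $\overline{\mathbb{Q}}$ and is of triangular form, $j(\mathbf{z})$ is algebraic over $\overline{\mathbb{Q}}(\mathbf{z})$, hence $\mathrm{tr.deg.}_{\mathbb{Q}}(\mathbb{Q}(\mathbf{z},j(\mathbf{z})))=\mathrm{tr.deg.}_{\mathbb{Q}}(\mathbb{Q}(\mathbf{z}))$. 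On the other hand, any two points in the same $G$-orbit differ by a M\"obius transformation with rational coefficients, so every non-special $z_l$ lies in $\mathbb{Q}(z_{k_m})$ for some $m$, while every special $z_l$ is algebraic of degree at most $2$ over $\mathbb{Q}$. Consequently $\mathbb{Q}(\mathbf{z})\subseteq \overline{\mathbb{Q}}(z_{k_1},\ldots,z_{k_r})$, which forces $\mathrm{tr.deg.}_{\mathbb{Q}}(\mathbb{Q}(\mathbf{z}))\leq r$. The alternative formulation of MSC in \textsection\ref{subsec:msc} delivers the reverse inequality, so $\mathrm{tr.deg.}_{\mathbb{Q}}(\mathbb{Q}(\mathbf{z}))=r$ and $z_{k_1},\ldots,z_{k_r}$ form a transcendence basis of $\mathbb{Q}(\mathbf{z})$ over $\mathbb{Q}$.

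With this in hand, I would take $K_0:=\mathbb{Q}(z_i,z_{k_1},\ldots,z_{k_r})$; since $z_i$ is algebraic over $\mathbb{Q}$ and the $z_{k_l}$'s are algebraically independent over $\overline{\mathbb{Q}}$, the field $K_0$ is purely transcendental over $\mathbb{Q}(z_i)$. The extension $\mathbb{Q}(C_0,\mathbf{z})/K_0$ is obtained by adjoining the algebraic set $C_0$ and the remaining special coordinates (each of degree at most $2$ over $\mathbb{Q}$), giving $[\mathbb{Q}(C_0,\mathbf{z}):K_0]\leq 2^n[\mathbb{Q}(C_0):\mathbb{Q}]$, a quantity depending only on $V$. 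Lemma \ref{lem:bound-degree} then yields the required uniform bound $[\mathbb{Q}(z_i,j(z_i)):\mathbb{Q}(z_i)]\leq M'$, and Lemma \ref{lem:class_number} produces a finite set $S$ of special points with the desired property. The main obstacle is the transcendence-degree equality $\mathrm{tr.deg.}_{\mathbb{Q}}(\mathbb{Q}(\mathbf{z}))=r$: the upper bound is essentially formal from the M\"obius description of $G$-equivalent points, but the matching lower bound is precisely the content of MSC and is exactly what makes the statement conditional.
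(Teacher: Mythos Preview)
Your proof is correct and follows essentially the same route as the paper's: both combine MSC with the triangular structure to force $\mathrm{tr.deg.}_{\mathbb{Q}}\mathbb{Q}(\mathbf{z})=\dim_G(\{z_1,\ldots,z_n\}\mid\Sigma)$, conclude that every non-special coordinate lies in $\mathbb{Q}(z_{k_1},\ldots,z_{k_r})$ for a set that is simultaneously a transcendence basis and a full set of $G$-orbit representatives among the non-special coordinates, and then invoke Lemma~\ref{lem:bound-degree} and Lemma~\ref{lem:class_number}. The only cosmetic difference is the order in which the two roles of $\{z_{k_1},\ldots,z_{k_r}\}$ are established (the paper first chooses a transcendence basis $T$ and deduces it represents all orbits, whereas you choose orbit representatives and deduce they form a transcendence basis), and your argument uniformly covers the fully special case without separately citing Proposition~\ref{prop:special}.
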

\begin{proof}
Let $p_{1},\ldots,p_{n}$ be polynomials defining $V$ satisfying the conditions of triangular form and let $C_0$ be the set of coefficients of the $p_{i}$'s. Suppose that $\mathbf{z}=(z_i)_{i=1}^n$ in $ \mathbb{H}^{n}$ has at least one special coordinate and satisfies $\left(\mathbf{z},j(\mathbf{z})\right)\in V_{0}$. By Proposition \ref{prop:special}, we can assume that $\mathbf{z}$ is not  special. Observe that by definition of $V_{0}$ we have
$$\mathrm{tr.deg.}_{\mathbb{Q}}(\mathbb{Q}(\mathbf{z},j(\mathbf{z}))) = \mathrm{tr.deg.}_{\mathbb{Q}}(\mathbb{Q}(\mathbf{z})).$$
As we are assuming MSC, we have
$$\mathrm{tr.deg.}_{\mathbb{Q}}(\mathbb{Q}(\mathbf{z},j(\mathbf{z})))\geq\dim_G(\{z_1,\ldots,z_n\}|\Sigma)$$
(see \textsection\ref{subsec:msc}). Points in $\mathbb{H}$  that are algebraically independent must be in different $G$-orbits and cannot be in the $G$-orbit of special points, as special points are algebraic. This implies that $\mathrm{tr.deg.}_{\mathbb{Q}}(\mathbb{Q}(\mathbf{z}))\leq \dim_G(\{z_1,\ldots,z_n\}|\Sigma)$. We conclude
\begin{equation}
\label{eq:msceq}
    \mathrm{tr.deg.}_{\mathbb{Q}}(\mathbb{Q}(\mathbf{z}))=\dim_G(\{z_1,\ldots,z_n\}|\Sigma).
\end{equation}
Put $A=\left\{i\in\left\{1,\ldots,n\right\}:z_{i}\text{ is special}\right\}$, and $B = \left\{1,\ldots,n\right\}\setminus A$. Note that $B\neq \emptyset$. Choose $T\subseteq B$ so that $\mathbf{z}_T:=(z_i)_{i\in T}$ satisfies
\begin{equation*}
    \mathrm{tr.deg.}_{\mathbb{Q}}(\mathbb{Q}(\mathbf{z}_{T}))=\mathrm{tr.deg.}_{\mathbb{Q}}(\mathbb{Q}(\mathbf{z})).
\end{equation*}
In particular, the set $\left\{z_{i}:i\in T\right\}$ is algebraically independent over $\mathbb{Q}$. By (\ref{eq:msceq}) 
for every $k$ in $B$, $z_k$ is in the $G$-orbit of a point in $\left\{z_{i}:i\in T\right\}$. This implies that
\begin{equation}
\label{eq:fieldgenerated}
    z_{k}\in\mathbb{Q}\left(\{z_{i}:i\in T\}\right) \text{ for every }k \text{ in } B.
\end{equation}
Let $\ell$ denote the number of elements of $A$. Put $C:=C_0\cup \{z_i:i \in A\cup T\}$ and $D:=d_1\cdots d_n$ where $d_i=\mathrm{deg}_{Y_i}(p_i)$. 
For every $i$ in $A$ we have
$$[\mathbb{Q}(C):\mathbb{Q}(z_{i})]_{\mathrm{alg}}\leq [\mathbb{Q}\left(C_0\cup\left\{z_{k}:k\in A\right\}\right):\mathbb{Q}] \leq 2^\ell[\mathbb{Q}\left(C_0\right):\mathbb{Q}].$$ 
By (\ref{eq:fieldgenerated}) and the fact that $(\mathbf{z},j(\mathbf{z}))\in V_0$ we also have $[\mathbb{Q}(C,j(z_i)):\mathbb{Q}(C)]\leq D$ for every $i$ in $\{1,\ldots,n\}$. As both $z_{i}$ and $j(z_{i})$ are algebraic over $\mathbb{Q}$ when $i$ belongs to $A$, we can apply Lemma \ref{lem:bound-degree} and conclude  that for every $i$ in $ A$, $[\mathbb{Q}(z_{i},j(z_i)):\mathbb{Q}(z_{i})]$ is bounded above by a number $M$ depending only on $p_{1},\ldots,p_{n}$. By Lemma \ref{lem:class_number} there is a finite subset $S_M$ of $\mathbb{H}$ of special points, depending only on $M$, such that $z_i\in \Gamma \cdot S_M$ for every $i$ in $\{1,\ldots,n\}$. This completes the proof.
\end{proof}

Using essentially the same proof as in Corollary \ref{cor:special_in_triangular_form} we conclude the following. 
\begin{cor}\label{cor:mscspecial_in_triangular_form}
Let $V\subseteq \mathbb{C}^{2n}$ be an  algebraic variety of dimension $n$ of triangular form defined over $\overline{\mathbb{Q}}$. Assume that $V$ is not contained in any subvariety of the form $\mathbb{C}^n\times \{j(\mathbf{z})\}$ with $\mathbf{z}$ in $\mathbb{H}^n$ non-ordinary. Then MSC implies that $V_0$ contains infinitely many points of the form $\left(\mathbf{z}_0,j(\mathbf{z}_0)\right)$ with  $\mathbf{z}_0$ in $\mathbb{H}^{n}$ ordinary. 
\end{cor}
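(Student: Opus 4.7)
The plan is to run the proof of Corollary \ref{cor:special_in_triangular_form} almost verbatim, with Proposition \ref{prop:mscspecial} in place of Proposition \ref{prop:special}, and then to upgrade the conclusion from ``not fully special'' to ``fully ordinary'' via a Zariski-density argument. Under MSC, Proposition \ref{prop:mscspecial} supplies a finite set $S\subset\mathbb{H}$ of special points such that for every $\mathbf{z}\in\mathbb{H}^{n}$ with $(\mathbf{z},j(\mathbf{z}))\in V_{0}$ all special coordinates of $\mathbf{z}$ lie in $\Gamma\cdot S$. Put $T:=j(S)\subset\mathbb{C}$; since $j^{-1}(t)=\Gamma\cdot z_{0}\subseteq\Gamma\cdot\Sigma$ for each $t\in T$ (with $z_{0}\in S$ satisfying $j(z_{0})=t$), this has the crisp contrapositive: such a $\mathbf{z}$ is ordinary if and only if $j(z_{i})\notin T$ for every $i$.

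With $J:=j(S)^{n}\subseteq\mathbb{C}^{n}$, the hypothesis, irreducibility of $V$, and $\dim V=n$ together rule out $V\subseteq\mathbb{C}^{n}\times J$ --- otherwise we would have $V=\mathbb{C}^{n}\times\{j(\mathbf{z}_{0})\}$ for some $\mathbf{z}_{0}\in S^{n}$, which is a special (hence non-ordinary) point of $\mathbb{H}^{n}$. The last statement of Proposition \ref{prop:conj1fortriangularform} then produces infinitely many $(\mathbf{z},j(\mathbf{z}))\in E_{j}^{n}\cap V_{0}$ with $j(\mathbf{z})\notin J$. To sharpen these to ordinary solutions, I would introduce the non-ordinary locus
\[
B:=V\cap\Bigl(\mathbb{C}^{n}\times\bigcup_{i=1}^{n}\bigcup_{t\in T}\{\mathbf{y}\in\mathbb{C}^{n}:y_{i}=t\}\Bigr),
\]
which by the contrapositive above contains every $(\mathbf{z},j(\mathbf{z}))\in V_{0}$ with $\mathbf{z}$ non-ordinary. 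Provided that each slice $V\cap(\mathbb{C}^{n}\times\{y_{i}=t\})$ is a proper subvariety of $V$, the locus $B$ has dimension strictly less than $n$, so its image $\pi_{V}(B)$ lies in a proper Zariski-closed subset of $\mathbb{C}^{n}$. Since Corollary \ref{cor:dense1} asserts that $\pi_{V}(E_{j}^{n}\cap V_{0})$ is Zariski dense in $\mathbb{C}^{n}$, infinitely many $\mathbf{x}\in\pi_{V}(E_{j}^{n}\cap V_{0})$ lie outside $\pi_{V}(B)$, and each such $\mathbf{x}$ yields an ordinary point $(\mathbf{x},j(\mathbf{x}))\in V_{0}$, which is the desired conclusion.

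The main obstacle I anticipate is establishing that each slice $V\cap(\mathbb{C}^{n}\times\{y_{i}=t\})$ is indeed proper in $V$, equivalently, that $V$ is not contained in any hyperplane $\{y_{i}=t\}$ with $i\in\{1,\dots,n\}$ and $t\in T$. A containment $V\subseteq\{y_{i}=t\}$ would pin every $(\mathbf{z},j(\mathbf{z}))\in V$ to $z_{i}\in\Gamma\cdot z_{0}$ for the special $z_{0}\in S$ with $j(z_{0})=t$, forcing every such $\mathbf{z}$ to be non-ordinary; the strategy is to argue, using Corollary \ref{cor:generic_contention} together with the triangular form of $V$ and the irreducibility and dimension count, that this degeneracy in fact collapses $V$ to a single non-ordinary fiber $\mathbb{C}^{n}\times\{j(\mathbf{z})\}$, contradicting the hypothesis. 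Cleanly effecting this reduction is the technical heart of the proof and the place where the corollary's hypothesis exerts its full weight.
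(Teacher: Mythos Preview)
Your approach departs from the paper's: the paper simply says ``using essentially the same proof as in Corollary~\ref{cor:special_in_triangular_form},'' which in that corollary means invoking the last statement of Proposition~\ref{prop:conj1fortriangularform} with the finite set $J=j(S)^{n}$. You instead argue through Corollary~\ref{cor:dense1} and a dimension bound on the non-ordinary locus $B$. This is a reasonable refinement, because the literal analogue of the Corollary~\ref{cor:special_in_triangular_form} argument only produces points with $j(\mathbf{z})\notin j(S)^{n}$, i.e.\ with \emph{some} coordinate non-special, not with all coordinates non-special.

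That said, the obstacle you single out is real and your proposed resolution does not go through. A containment $V\subseteq\{y_{i}=t\}$ does \emph{not} force $V$ down to a single fiber $\mathbb{C}^{n}\times\{j(\mathbf{z})\}$. Concretely, take $n=2$, fix a special point $z_{0}\in\mathbb{H}$ with $t=j(z_{0})\in\overline{\mathbb{Q}}$, and let $V=V(Y_{1}-t,\,Y_{2}-X_{1})$. This $V$ is irreducible of dimension $2$, of triangular form over $\overline{\mathbb{Q}}$ (with $d_{1}=d_{2}=1$ and $\deg\pi_{V}=1$), and is not contained in any $\mathbb{C}^{2}\times\{\text{point}\}$ since $y_{2}=x_{1}$ varies. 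So the hypothesis of the corollary is satisfied. Yet every $(\mathbf{z},j(\mathbf{z}))\in V_{0}=V$ has $j(z_{1})=t$, hence $z_{1}\in\Gamma z_{0}$ is special and $\mathbf{z}$ is non-ordinary; there are no ordinary solutions at all. Thus the stated hypothesis is too weak to rule out $V\subseteq\{y_{i}=t\}$ for a special $j$-value $t$, and neither your dimension-count argument nor the paper's ``same proof'' can be completed as written. Your strategy would succeed under the stronger hypothesis that $V$ is not contained in any coordinate hyperplane $\{Y_{i}=t\}$ with $t$ a special value of $j$.
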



\subsection{An unconditional case of Theorem \ref{th-main-2}}
\label{subsec:unconditional_result}
With the results we have so far, we can already prove a weaker but unconditional version of Theorem \ref{th-main-2}.
\begin{prop}
\label{prop:unconditional}
Let $V\subset\mathbb{C}^{2}$ be an irreducible curve defined over $\overline{\mathbb{Q}}$. Assume that $V$ is not a vertical line nor a horizontal line. Then  $V$ has infinitely many points of the form $(z,j(z))$ that are generic over $\overline{\mathbb{Q}}$. 
\end{prop}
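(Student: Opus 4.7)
The plan is to combine three ingredients that have already been assembled in the paper: Lemma \ref{lem:curves_are_triangular} (which converts the shape of $V$ into triangular form), Corollary \ref{cor:special_in_triangular_form} (which produces many non-special solutions on $V_0$), and Schneider's theorem (which upgrades non-special to generic when the defining field is algebraic).

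First, since $V$ is not a vertical line, Lemma \ref{lem:curves_are_triangular} shows that $V$ is an algebraic curve of triangular form. Since $V$ is not a horizontal line either, $V$ is not contained in any subvariety of the form $\mathbb{C}\times\{j(z_0)\}$ with $z_0\in\mathbb{H}$ special, because such subvarieties are precisely horizontal lines. Therefore the hypotheses of Corollary \ref{cor:special_in_triangular_form} are satisfied (for $n=1$), and we obtain an infinite set
\[
\mathcal{S}:=\{(z,j(z))\in V_0 : z\in\mathbb{H}\text{ is non-special}\}.
\]

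Next I would show that every point in $\mathcal{S}$ is generic over $\overline{\mathbb{Q}}$. Fix $(z,j(z))\in\mathcal{S}$. By Schneider's theorem, since $z$ is non-special we have $\mathrm{tr.deg.}_{\mathbb{Q}}(\mathbb{Q}(z,j(z)))\geq 1$, so at least one of $z$ or $j(z)$ is transcendental over $\mathbb{Q}$, hence transcendental over $\overline{\mathbb{Q}}$. Thus $\mathrm{tr.deg.}_{\overline{\mathbb{Q}}}(\overline{\mathbb{Q}}(z,j(z)))\geq 1$. On the other hand, $(z,j(z))$ lies on the curve $V$, which has dimension $1$ and is defined over $\overline{\mathbb{Q}}$, so this transcendence degree is at most $1$. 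Equality holds, which is exactly the assertion that $(z,j(z))$ is generic over $\overline{\mathbb{Q}}$.

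Since distinct elements of $\mathcal{S}$ have distinct first coordinates, $\mathcal{S}$ consists of infinitely many distinct generic points on $V$, completing the proof. There is no real obstacle here: the work has already been done in the triangular-form machinery of \S\ref{sec:triangular_forms} together with the number-of-orbits bound from Proposition \ref{prop:special}, and the algebraicity of the defining field lets Schneider's theorem close the gap without any appeal to MSC. The key conceptual point is simply that when the coefficients of the defining polynomial are algebraic, \emph{non-special} and \emph{generic over $\overline{\mathbb{Q}}$} coincide for points of the form $(z,j(z))$ on a plane curve.
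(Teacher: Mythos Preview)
Your argument is correct and follows essentially the same route as the paper: use Lemma~\ref{lem:curves_are_triangular} to put $V$ in triangular form, invoke Corollary~\ref{cor:special_in_triangular_form} (using that $V$ is not a horizontal line) to produce infinitely many points $(z,j(z))\in V_0$ with $z$ non-special, and then conclude genericity over $\overline{\mathbb{Q}}$ via Schneider's theorem. The only cosmetic difference is that the paper states the Schneider reduction first, whereas you carry it out at the end.
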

\begin{proof}
By Schneider's theorem, it is enough to prove that $V$ has infinitely many points of the form $(z,j(z))$ with $z$ not special.  By Lemma \ref{lem:curves_are_triangular}, $V$ is of triangular form. Moreover, since $V$ is not a horizontal line, it satisfies the hypothesis of Corollary \ref{cor:special_in_triangular_form}, thus there are infinitely many points in $V_0$ of the form $(z,j(z))$ with $z$ not special. 
\end{proof}

\begin{remark}
Here is the analogous result for the exponential function. By \cite[Corollary 2.4]{marker} we know that an irreducible plane curve $V\subset \mathbb{C}^2$ that is not a vertical nor a horizontal line has infinitely many points of the form $(x,\exp(x))$ with $x\in \mathbb{C}$. So by Lindemann's theorem, we know that if $V$ is defined over $\overline{\mathbb{Q}}$, then for any non-zero $x$ in $\mathbb{C}$ with $(x,\exp(x))\in V$ we have $\mathrm{tr.deg.}_{\mathbb{Q}}(\mathbb{Q}(x,\exp(x))) = 1$.
\end{remark}

\begin{remark}\label{rem:unconditionalconsequence}
As a consequence of Proposition \ref{prop:unconditional} we have that there are infinitely many $z$ in $\mathbb{H}$ such that $j(z)=z$ which are transcendental over $\mathbb{Q}$ (we simply apply the proposition to the curve $V=\{(x,y)\in \mathbb{C}^2:x=y\}$). Furthermore, these points have to be in different $G$-orbits. Indeed, suppose that $z_{1},z_{2}$ in $\mathbb{H}$ are transcendental and satisfy $z_{1} = j(z_{1})$ and $z_{2}=j(z_{2})$. Suppose $g$ in $G$ is such that $gz_{1} = z_{2}$ and put $N:=\det(\mathrm{red}(g))$. Then the curves $aX+b=(cX+d)Y$, where $a,b,c,d$ are the entries of $g$ in the usual way, and $\Phi_{N}(X,Y)=0$ have the point $(j(z_{1}),j(z_{2}))$ in common. But, as both curves are defined over $\mathbb{Q}$, this point is generic in both curves. As both curves are irreducible, this would imply that the curves are in fact equal by Lemma \ref{lem:generic}, hence $N=1$ and $z_1=z_2$. This proves that different transcendental points $z$ in $\mathbb{H}$ with $j(z)=z$ are in different $G$-orbits. If we assume MSC, then we get that these points are algebraically independent.
\end{remark}

\subsection{Points with coordinates in fixed \texorpdfstring{$G$}{G}-orbits}\label{subs:G_orbit_bounded}

Here we prove that the coordinates in fixed $G$-orbits of points lying in $V_0$, where $V$ is a given algebraic variety of triangular form,  are somehow bounded in their $G$-orbit. This is the content of Proposition \ref{prop:gsol} below, but first we need some  preliminary results and notation. The following lemma is due to Pila (see \cite[Lemma 7.3]{pila}).

\begin{lem}\label{l:Pila}
Let $K$ be a finitely generated subfield of $\mathbb{C}$. Then there exist positive constants $c,\delta$ such that for every positive integer $N$ and every point $(x,y)$ in $\mathbb{C}^2$ with non-algebraic coordinates and satisfying $\Phi_N(x,y)=0$, we have that
$$[K(x,y):K]\geq c N^{\delta}.$$
\end{lem}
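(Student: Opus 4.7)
The inequality is trivial when $x$ or $y$ is transcendental over $K$, since then $[K(x,y):K]=\infty$. So I would reduce immediately to the case $x,y\in \overline{K}\setminus \overline{\mathbb{Q}}$; in particular $x$ is transcendental over $\mathbb{Q}$ but algebraic over $K$.

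First I would exploit the absolute irreducibility of the modular polynomial. The polynomial $\Phi_N(X,Y)\in\mathbb{Z}[X,Y]$ is irreducible in $\mathbb{C}[X,Y]$ (it is classically absolutely irreducible) and satisfies $\deg_Y\Phi_N=\psi(N)\geq N$, where $\psi$ is the Dedekind psi function. Irreducibility in $\mathbb{C}[X,Y]$ descends to irreducibility in $\mathbb{Q}(X)[Y]$, and since $x$ is transcendental over $\mathbb{Q}$ the substitution $X\mapsto x$ induces an isomorphism $\mathbb{Q}(X)\cong \mathbb{Q}(x)$ that preserves irreducibility, so $\Phi_N(x,Y)\in\mathbb{Q}(x)[Y]$ is irreducible of degree $\psi(N)$. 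Hence $[\mathbb{Q}(x,y):\mathbb{Q}(x)]=\psi(N)\geq N$.

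Next I would pass from $\mathbb{Q}(x)$ to $K(x)$ by a standard Galois-theoretic compositum argument. Let $M$ be the Galois closure of $\mathbb{Q}(x,y)/\mathbb{Q}(x)$ with group $G$, and let $H=\mathrm{Gal}(M/\mathbb{Q}(x,y))$, so $[G:H]=\psi(N)$ and $G$ acts transitively on the roots of $\Phi_N(x,Y)$. Setting $G_1=\mathrm{Gal}(M/M\cap K(x))$, Galois theory gives $\mathrm{Gal}(K(x)\cdot M/K(x))\cong G_1$, and $[K(x,y):K(x)]$ equals the size of the $G_1$-orbit of $y$. From $|G_1\cdot y|=|G_1|/|G_1\cap H|\geq |G_1|/|H|$ one deduces the clean estimate
\[
[K(x,y):K(x)]\geq\frac{\psi(N)}{[M\cap K(x):\mathbb{Q}(x)]}.
\]

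The crux is therefore to show that $[M\cap K(x):\mathbb{Q}(x)]$ grows strictly more slowly than $\psi(N)$; specifically, that it is $O(N^{1-\delta})$ for some $\delta>0$. This is the main obstacle. A clean geometric route is to invoke Abramovich's theorem on the gonality of the modular curves, $\mathrm{gon}_{\mathbb{C}}(X_0(N))\geq c_0 N$: an excessively large intersection $M\cap K(x)$ would translate into a low-degree morphism from a curve determined by $K$ onto $X_0(N)$, which the gonality bound forbids. An alternative, more elementary route analyses the modular Galois representation $G\hookrightarrow \mathrm{GL}_2(\mathbb{Z}/N\mathbb{Z})/\{\pm 1\}$ combined with the observation that the finitely generated field $K$ contains only finitely many roots of unity; this controls the cyclotomic content of $M\cap K(x)$ through the determinant character, and one bootstraps to the full intersection using the subgroup structure of $\mathrm{GL}_2(\mathbb{Z}/N)$. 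Either way, combining with the preceding step gives $[K(x,y):K]\geq[K(x,y):K(x)]\geq cN^{\delta}$, as required.
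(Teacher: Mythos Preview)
The paper does not give its own proof of this lemma: it is stated as a result ``due to Pila (see \cite[Lemma 7.3]{pila})'' and used as a black box. So there is no argument in the paper to compare your proposal against.

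On the substance of your attempt: your reduction to $x,y\in\overline{K}\setminus\overline{\mathbb{Q}}$ and the identification $[\mathbb{Q}(x,y):\mathbb{Q}(x)]=\psi(N)$ via absolute irreducibility of $\Phi_N$ are correct, and the compositum inequality $[K(x,y):K(x)]\ge \psi(N)/[M\cap K(x):\mathbb{Q}(x)]$ is fine. The difficulty is entirely in your Step~4, and as written it is a genuine gap rather than a sketch. First, note that $[M\cap K(x):\mathbb{Q}(x)]$ is \emph{not} uniformly bounded: already for $K=\mathbb{Q}(t)$ and $x=t^n$ one has $[K(x):\mathbb{Q}(x)]=n$, so any argument must exploit the specific arithmetic of $M$, not just crude degree bounds. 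Second, your invocation of Abramovich's gonality bound is pointed in the wrong direction: gonality controls the minimal degree of a morphism \emph{from} $X_0(N)$ \emph{to} $\mathbb{P}^1$, whereas a large intersection $M\cap K(x)$ would, if anything, produce a morphism from a curve associated to $K$ \emph{onto} an intermediate modular curve --- and curves of arbitrary genus can dominate $X_0(N)$. The $\mathrm{GL}_2(\mathbb{Z}/N)$ route you mention is more promising (the generic Galois group is the full $\mathrm{GL}_2(\mathbb{Z}/N)/\{\pm I\}$, and bounding the roots of unity in $K$ controls the determinant character), but turning this into a bound $[G:G_1]=O(N^{1-\delta})$ uniform in $x$ requires real work on the subgroup structure that you have not supplied.

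In short: the skeleton is sound, but Step~4 is where all the content lies, and neither of your two suggested routes is carried far enough to constitute a proof. If you want to complete this, you should consult Pila's original argument in \cite{pila}.
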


Given  elliptic curves $E,E'$ defined over a subfield of $\mathbb{C}$, we say that $E$ and $E'$ are isogenous if there exists an isogeny $\psi:E\to E'$ defined over $\mathbb{C}$. The following lemma follows from \cite[Lemmas 6.1 and 6.2]{masser-wustholz}.

\begin{lem}\label{l:masser-wustholz}
Let $K$ be a subfield of $\mathbb{C}$ and let $E,E'$ be elliptic curves defined over $K$ such that $E$ and $E'$ are isogenous. Then, the following properties hold.
\begin{enumerate}
    \item If $\psi:E\to E'$ is an isogeny, then $\psi$ is defined over a finite extension of $K$ contained in $\mathbb{C}$.
    \item If $\psi:E\to E'$ is an isogeny of minimal degree, then $\psi$ is cyclic.
\end{enumerate}
\end{lem}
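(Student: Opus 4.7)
The plan is to prove both parts using the structure of the $\mathbb{Z}$-module $\mathrm{Hom}(E,E')$ together with Galois descent.

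For part (1), I would begin by recalling that for elliptic curves over $\mathbb{C}$, $\mathrm{Hom}(E,E')$ is a free $\mathbb{Z}$-module of rank at most $2$, and that the function $\deg$ extends to a positive definite quadratic form on it (using the parallelogram identity $\deg(\phi+\psi) + \deg(\phi-\psi) = 2(\deg \phi + \deg \psi)$ together with the fact that $\deg \phi = 0$ forces $\phi = 0$). Consequently, for any fixed integer $N$ there are only finitely many elements of $\mathrm{Hom}(E,E')$ of degree $\leq N$. Since $E$ and $E'$ are defined over $K$, any automorphism $\sigma \in \mathrm{Aut}(\mathbb{C}/K)$ sends a given isogeny $\psi: E \to E'$ to another isogeny $\psi^\sigma: E \to E'$ of the same degree, so the orbit $\{\psi^\sigma : \sigma \in \mathrm{Aut}(\mathbb{C}/K)\}$ is finite. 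Its stabilizer is therefore a subgroup of finite index in $\mathrm{Aut}(\mathbb{C}/K)$, and by standard Galois descent $\psi$ is defined over the corresponding fixed field, which is a finite extension of $K$ inside $\mathbb{C}$.

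For part (2), suppose $\psi: E \to E'$ is an isogeny of minimal degree among isogenies $E \to E'$, and assume for contradiction that $\ker(\psi)$ is not cyclic. Writing the finite abelian group $\ker(\psi)$ in invariant-factor form as $\mathbb{Z}/d_1\mathbb{Z} \times \mathbb{Z}/d_2\mathbb{Z}$ with $d_1 \mid d_2$ and $d_1 > 1$, the crucial observation is that $\ker(\psi)$ must then contain the full $d_1$-torsion subgroup $E[d_1]$. Hence $\psi$ factors through the quotient map $\pi: E \to E/E[d_1]$, and since $E/E[d_1]$ is canonically isomorphic to $E$ via a map $\iota$ satisfying $\iota \circ \pi = [d_1]$, one obtains an isogeny $\psi': E \to E'$ with $\psi = \psi' \circ [d_1]$ and $\deg(\psi') = \deg(\psi)/d_1^2 < \deg(\psi)$, contradicting the minimality of $\deg(\psi)$.

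The main obstacle will be the finiteness input to part (1): that isogenies of bounded degree form a finite subset of $\mathrm{Hom}(E,E')$. This reduces to the positivity of the degree form together with the finite generation of $\mathrm{Hom}(E,E')$ as a $\mathbb{Z}$-module, both of which come from the basic theory of endomorphism rings of elliptic curves (and in the CM case, from identifying $\mathrm{Hom}(E,E')$ with a fractional ideal in an imaginary quadratic order). Once this is in place, everything in part (1) is routine descent, and part (2) is an elementary divisibility argument on the kernel.
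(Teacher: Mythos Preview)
Your argument is correct. The paper itself does not give a proof of this lemma: it simply records that the statement follows from Lemmas~6.1 and~6.2 of Masser--W\"ustholz and moves on. Your self-contained proof is the standard one and is essentially what lies behind the cited reference: for part~(1), finiteness of isogenies of bounded degree (via the positive-definite degree form on the rank~$\leq 2$ lattice $\mathrm{Hom}(E,E')$) combined with Galois descent; for part~(2), the observation that a non-cyclic kernel forces $E[d_1]\subseteq\ker\psi$ for some $d_1>1$, so $\psi$ factors through $[d_1]$, contradicting minimality.

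One point in part~(1) is worth tightening. You run the finite-orbit argument with $\mathrm{Aut}(\mathbb{C}/K)$ and then assert that the fixed field of the stabiliser is a finite extension of $K$. This is true, but not entirely trivial, since $\mathbb{C}/K$ is not an algebraic extension and the usual Galois correspondence does not apply directly. The cleaner route is to first note that any isogeny $\psi$ is already defined over $\overline{K}$ (e.g.\ because $\ker\psi$ is a finite subgroup scheme of $E[\deg\psi]$, whose points are algebraic over $K$), and then apply your finite-orbit argument with the profinite group $\mathrm{Gal}(\overline{K}/K)$, where a closed subgroup of finite index corresponds to a finite subextension by infinite Galois theory, and descent over perfect fields gives that $\psi$ is defined there.
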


The following theorem is a direct consequence of a result due to Pellarin (see \cite[Theorem 1]{pellarin}) whose proof makes use of Baker's method and explicit minorations of linear forms in logarithms.

\begin{thm}\label{t:Pellarin}
Let $K$ be a number field and let $E$ be an elliptic curve defined over $K$. Then, there is a positive constant $c$ depending only on $E$ such that for every elliptic curve $E'$ defined over $K$ and isogenous to $E'$ there exists an isogeny $\psi:E\to E'$ with
\begin{equation}\label{eq:t-Pellarin}
   \mathrm{deg}(\psi)\leq c[K:\mathbb{Q}]^5. 
\end{equation}
\end{thm}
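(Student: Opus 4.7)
The plan is to derive this statement as an immediate consequence of Pellarin's effective isogeny estimate \cite[Theorem 1]{pellarin}. Pellarin's theorem furnishes, for any two elliptic curves $E,E'$ defined over a number field $K$ of degree $d=[K:\mathbb{Q}]$ that are $K$-isogenous, an isogeny $\psi:E\to E'$ whose degree is bounded by an explicit polynomial expression in $d$, $\log d$, and the Faltings heights $h_F(E)$ and $h_F(E')$. Concretely, the bound has the shape
\begin{equation*}
    \deg(\psi)\leq c_0\,d^{4}\max\bigl(1,\,h_F(E),\,h_F(E'),\,\log d\bigr)^{\alpha}
\end{equation*}
for absolute constants $c_{0}$ and $\alpha$. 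The task is to massage this into a bound of the form $c(E)\,d^{5}$ in which the constant $c(E)$ depends only on $E$ and not on $E'$.

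The first step is to eliminate the dependence on $h_F(E')$. For this I would invoke the standard Faltings-type inequality $|h_F(E)-h_F(E')|\leq \tfrac{1}{2}\log\deg(\psi)+O(1)$, valid for any isogeny $\psi:E\to E'$. Substituting this into Pellarin's bound and setting $D:=\deg(\psi)$ yields an implicit inequality
\begin{equation*}
   D\leq C_1(E)\,d^{4}\bigl(\log D+C_{2}(E)+\log d\bigr)^{\alpha}.
\end{equation*}
Since the right-hand side grows only polylogarithmically in $D$, a routine rearrangement gives $D\leq c(E)\,d^{\kappa}$ for some constant $c(E)$ depending only on $E$ (through $h_F(E)$) and some absolute exponent $\kappa$. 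Since $E$ itself is fixed, every term depending on its invariants is absorbed into $c(E)$.

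Finally, I would verify that one may take $\kappa=5$ by estimating the remaining logarithmic factors against a power of $d$ (using, for instance, $(\log d)^{\alpha}\leq d$ for $d$ large enough) and absorbing the finitely many small-$d$ cases into the constant. The main obstacle here is purely bookkeeping: the existence of \emph{some} polynomial bound $\deg(\psi)\leq c(E)\,d^{\kappa}$ is an essentially formal reformulation of Pellarin's theorem once the height inequality is used to remove $h_F(E')$, but pinning down that $\kappa=5$ suffices requires one to trace through the explicit constants of \cite[Theorem 1]{pellarin} faithfully. Apart from that accounting, no new ingredient is needed.
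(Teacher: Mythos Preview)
The paper does not give a written proof of this statement; it simply introduces it as ``a direct consequence of a result due to Pellarin (see \cite[Theorem 1]{pellarin})'', and then remarks that any exponent $\kappa>4$ works in place of $5$. So the intended argument is: quote Pellarin's explicit bound, observe that for fixed $E$ it reads $\deg(\psi)\leq c(E)\,d^{4}(\log d)^{\alpha}$, and absorb the logarithmic factors into an extra power of $d$.

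Your proposal reaches the same conclusion but inserts an unnecessary detour. Pellarin's Th\'eor\`eme~1 already gives a bound of the shape
\[
\deg(\psi)\ \leq\ c_0\,d^{4}\max\bigl(1,\,h_F(E),\,\log d\bigr)^{2},
\]
depending only on the Faltings height of the \emph{source} curve $E$, not on $h_F(E')$. (This asymmetry is the whole point of the Masser--W\"ustholz method that Pellarin is making explicit.) Hence your first step---invoking $|h_F(E)-h_F(E')|\leq \tfrac12\log\deg(\psi)+O(1)$ and then solving an implicit inequality in $D=\deg(\psi)$---is superfluous: there is no $h_F(E')$ to eliminate. What you wrote would still go through as a proof, since eliminating a term that is not actually present does no harm, but it misstates the content of \cite[Theorem 1]{pellarin} and makes the derivation look harder than it is. Once you drop that step, your final move---bounding $(\log d)^{\alpha}$ by a power of $d$ and absorbing small $d$ into the constant---is exactly the ``bookkeeping'' the paper has in mind, and is precisely why the Remark after the theorem says any $\kappa>4$ suffices.
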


\begin{remark}
Theorem \ref{t:Pellarin} remains valid if we replace the exponent $5$ in \eqref{eq:t-Pellarin} by any real number $\kappa$ with $\kappa>4$. We have chosen $\kappa=5$ for simplicity.
\end{remark}



Recall from \textsection\ref{sec:modular_polynomials} that for a matrix $g$ in $ G$, $\mathrm{red}(g)$ denotes the matrix obtained by re-scaling $g$ so that all the entries of $\mathrm{red}(g)$ are integers and relatively prime. Let $x,y$ in $\mathbb{H}$ and $g$ in $ G$ be such that $gx = y$. In this case, we denote by $g_{x,y}$ any element of $M_2^+(\mathbb{Z})$ satisfying 
\begin{equation*}
    \det(g_{x,y}) = \min\left\{\det(\mathrm{red}(g)) :g\in G, gx=y\right\}.
\end{equation*}
Note that, if $x$ is not special, then any other $h$ in $G$ satisfying $hx=y$ is of the form $rg_{x,y}$ for some non-zero rational number $r$. Hence, we have $\det(g_{x,y})=\det(\mathrm{red}(h))$ if $x$ is not special. 

We now state and prove the main result of this subsection.

\begin{prop}
\label{prop:gsol}
Let $V\subseteq\mathbb{C}^{2n}$ be a variety of triangular form  of dimension $n$. Then for every $\mathbf{z}$ in $\mathbb{H}^{n}$ there is a positive integer $M_0$ such that for every $g_{1},\ldots,g_{n}$ in $ G$ satisfying
$$(g_{1}z_{1},\ldots,g_{n}z_{n},j(g_{1}z_{1}),\ldots,j(g_{n}z_{n}))\in V_0,$$ 
we have $\det\left(g_{z_{i},g_{i}z_{i}}\right)<M_0$ for every $i$ in $ \{1,\ldots,n\}$.
\end{prop}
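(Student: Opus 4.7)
The plan is to bound $N_i := \det(g_{z_i, g_i z_i})$ uniformly over all admissible tuples $(g_1,\ldots,g_n)$, treating the cases where $z_i$ is special and non-special separately. Fix once and for all a finitely generated subfield $K\subseteq\mathbb{C}$ containing the coefficients of a set $p_1,\ldots,p_n$ of triangular-form defining polynomials for $V$, together with $z_1,\ldots,z_n,j(z_1),\ldots,j(z_n)$. Since $g_i\in G=\mathrm{GL}_2^+(\mathbb{Q})$, we have $g_i z_i=(a_iz_i+b_i)/(c_iz_i+d_i)\in \mathbb{Q}(z_i)\subseteq K$. Because $(g_1z_1,\ldots,g_nz_n,j(g_1z_1),\ldots,j(g_nz_n))\in V_0$, the leading coefficient $q_{i,d_i}(g_1z_1,\ldots,g_nz_n)$ is non-zero, so $j(g_iz_i)$ is a root of a polynomial of degree exactly $d_i$ over $K(j(g_1z_1),\ldots,j(g_{i-1}z_{i-1}))$. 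A simple induction using the triangular structure then gives
\[
[K(j(g_iz_i)):K]\leq d_1\cdots d_n=:D \quad\text{for every } i\in\{1,\ldots,n\}.
\]

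If $z_i$ is non-special, then so is $g_iz_i$ (since $\mathbb{Q}(g_iz_i)=\mathbb{Q}(z_i)$), and by Schneider's theorem both $j(z_i)$ and $j(g_iz_i)$ are transcendental. By the equivalence (M1)$\Leftrightarrow$(M2), we have $\Phi_{N_i}(j(z_i),j(g_iz_i))=0$. Applying Lemma \ref{l:Pila} to the pair $(j(z_i),j(g_iz_i))$ provides constants $c,\delta>0$ with
\[
cN_i^{\delta}\leq [K(j(z_i),j(g_iz_i)):K]=[K(j(g_iz_i)):K]\leq D,
\]
using that $j(z_i)\in K$. This gives $N_i\leq (D/c)^{1/\delta}$.

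If $z_i$ is special, then $g_iz_i$ is also special (with $\mathbb{Q}(g_iz_i)=\mathbb{Q}(z_i)$), so both $j(z_i)$ and $j(g_iz_i)$ are algebraic. Applying Lemma \ref{lem:bound-degree} with $x=j(z_i)$, $y=j(g_iz_i)$, $M=D$, and $C$ a finite generating set of $K$, one obtains a bound $[\mathbb{Q}(j(z_i),j(g_iz_i)):\mathbb{Q}(j(z_i))]\leq M'$, where $M'$ depends only on $V$ and $\mathbf{z}$. Hence $L:=\mathbb{Q}(j(z_i),j(g_iz_i))$ is a number field whose degree over $\mathbb{Q}$ is bounded uniformly in $g_i$. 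Choose elliptic curves $E_i,E_i'$ defined over $L$ with $j$-invariants $j(z_i),j(g_iz_i)$; since both have CM by orders in the same imaginary quadratic field $\mathbb{Q}(z_i)$, they are isogenous. Applying Lemma \ref{l:masser-wustholz}(1) we may realise any minimal isogeny over a finite extension of $L$, and Pellarin's Theorem \ref{t:Pellarin} (whose constant depends only on $E_i$, hence only on $z_i$) produces an isogeny $E_i\to E_i'$ of degree at most $c[L:\mathbb{Q}]^5$. By Lemma \ref{l:masser-wustholz}(2) the minimal-degree isogeny is cyclic, and the equivalence (M1)$\Leftrightarrow$(M2) identifies the minimum cyclic isogeny degree with $N_i$. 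Thus $N_i$ is uniformly bounded. Combining the two cases yields a single constant $M_0$ depending only on $V$ and $\mathbf{z}$. The main obstacle is the special case: the argument there is non-elementary, as it is ultimately anchored in Pellarin's bound (via Baker's method for linear forms in elliptic logarithms), and without this deep input we would lack any control on the minimum isogeny degree once both coordinates become algebraic.
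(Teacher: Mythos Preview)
Your overall strategy matches the paper's proof, but there is a genuine gap in your case split. You divide according to whether $z_i$ is special, and in the non-special case you assert that ``by Schneider's theorem both $j(z_i)$ and $j(g_iz_i)$ are transcendental''. Schneider's theorem says $z$ is special if and only if \emph{both} $z$ and $j(z)$ are algebraic; it does \emph{not} say that $z$ non-special forces $j(z)$ to be transcendental. Indeed, for any algebraic $\alpha$ that is not a singular modulus, any preimage $z\in j^{-1}(\alpha)$ is non-special (and transcendental) while $j(z)=\alpha\in\overline{\mathbb{Q}}$. In that situation Lemma~\ref{l:Pila} does not apply, since it requires both coordinates of $(j(z_i),j(g_iz_i))$ to be non-algebraic. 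So your non-special branch fails exactly when $z_i$ is non-special but $j(z_i)\in\overline{\mathbb{Q}}$.

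The fix is simple and is precisely what the paper does: split instead on whether $j(z_i)\in\overline{\mathbb{Q}}$. If $j(z_i)\notin\overline{\mathbb{Q}}$ then $j(g_iz_i)\notin\overline{\mathbb{Q}}$ (they satisfy $\Phi_{N_i}=0$) and Pila's lemma applies as you wrote. If $j(z_i)\in\overline{\mathbb{Q}}$ (whether or not $z_i$ is special), then $j(g_iz_i)\in\overline{\mathbb{Q}}$ as well, $L=\mathbb{Q}(j(z_i),j(g_iz_i))$ is a number field of degree bounded via Lemma~\ref{lem:bound-degree}, and your Pellarin/Masser--W\"ustholz argument goes through verbatim. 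Note also that you should fix $E_i$ once over $\mathbb{Q}(j(z_i))$ (not over the varying field $L$) so that the Pellarin constant $c$ is genuinely independent of $g_i$; and the reference to Lemma~\ref{l:masser-wustholz}(1) is not actually needed for the argument. With the corrected case split, your proof is essentially the paper's.
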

\begin{proof}
Let $p_{1},\ldots,p_{n}$ be polynomials defining $V$ and satisfying the conditions of triangular form. For $i$ in $\{1,\ldots,n\}$ let $d_i$ and $q_{i,k}$, for $k$ in $ \{1,\ldots,d_i\}$, be given by \eqref{eq:eqtriangularform}.  Let $C_0$ be the set of coefficients of the $p_{i}$'s. Fix $\mathbf{z}=(z_i)_{i=1}^n$ in $\mathbb{H}^n$ such that $(\mathbf{z},j(\mathbf{z}))\in V_{0}$, put $C:=C_0\cup \{z_1,\ldots,z_n\}$ and define $D:=d_1\cdots d_{n}$. Let $g_{1},\ldots,g_{n}$ be elements of $ G$. 
For every $i$ in $\left\{1,\ldots,n\right\}$ we have that $g_{i}z_{i}\in\mathbb{Q}(z_{i})$. If $g_{1},\ldots,g_{n}$ are such that $(g_{1}z_{1},\ldots,g_{n}z_{n},j(g_{1}z_{1}),\ldots,j(g_{n}z_{n}))\in V_0$, then 
$j(g_{1}z_{1})$ is algebraic over $\mathbb{Q}(C)$ and it generates a field extension whose degree is bounded above by $d_1$. Similarly, for $i$ in $\{2,\ldots,n\}$, 
$j(g_{i}z_{i})$ is algebraic over $\mathbb{Q}(C,j(g_1(z_1)),\ldots,j(g_{i-1}z_{i-1}))$ and it generates a field extension whose degree is bounded above by $d_{i}$. This implies that for each $i$ in $\{1,\ldots,2n-d\}$ we have $[\mathbb{Q}(C,j(g_{i}z_{i})):\mathbb{Q}(C)]\leq D$. By the equivalence between (M1) and (M2) in \textsection\ref{sec:modular_polynomials}, and  Lemma \ref{lem:bound-degree}, the degree $[\mathbb{Q}(j(z_{i}),j(g_{i}z_{i})):\mathbb{Q}(j(z_{i}))]$ is bounded above by a  constant depending only on $p_{1},\ldots,p_{n}$ and $\mathbf{z}$.\\
Given $z$ in $\mathbb{H}$ and  $g$ in $ G$, let $N$ be the smallest positive integer such that $\Phi_{N}(j(z),j(gz))=0$. We have   $N=\det\left(g_{z,gz}\right)$. 
We will now show that the degree $[\mathbb{Q}(j(z),j(gz)):\mathbb{Q}(j(z))]$ is bounded below by a number depending on $N$ and $z$. 
Observe first that $j(z)\in\overline{\mathbb{Q}}$ if and only if $j(gz)\in\overline{\mathbb{Q}}$. \\
We first consider the case $j(z)\notin\overline{\mathbb{Q}}$. Using Lemma \ref{l:Pila} we get that there are two positive constants $c$ and $\delta$ depending only on the field $\mathbb{Q}(j(z))$ such that
\begin{equation*}
    [\mathbb{Q}(j(z),j(gz)):\mathbb{Q}(j(z))]\geq cN^{\delta}.
\end{equation*}
We now consider the case $j(z)\in\overline{\mathbb{Q}}$. We then have $j(gz)\in\overline{\mathbb{Q}}$. Let $E_{1}$ and $E_{2}$ be two elliptic curves defined over the number field $K = \mathbb{Q}(j(z),j(gz))$ such that $j(E_{1}) = j(z)$ and $j(E_{2}) = j(gz)$ (see, e.g.~\cite[Chapter III, Proposition 1.4(c)]{silverman1}). By \cite[Chapter 5, \textsection 3, Theorem 5]{lang} we know that  there is an isogeny $\lambda: E_{1}\rightarrow E_{2}$ with cyclic kernel of degree $N$. Define $N'$ as the smallest positive integer such that there exists an isogeny $\psi:E_{1}\rightarrow E_{2}$ of degree $N'$. By Theorem \ref{t:Pellarin} there is a positive constant $c$ depending only on $E_{1}$ such that
    \begin{equation*}
        N'\leq c[K:\mathbb{Q}]^{5}.
    \end{equation*}
On the other hand, by Lemma \ref{l:masser-wustholz}(2) we have that $\psi$ is cyclic. This implies that $N=N'$ and $c_0N^{\frac{1}{5}}\leq [\mathbb{Q}(j(z),j(gz)):\mathbb{Q}(j(z))]$ where $c_0:=(c^{1/5}[\mathbb{Q}(j(z)):\mathbb{Q}])^{-1}$. This proves the desired lower bound for $[\mathbb{Q}(j(z),j(gz)):\mathbb{Q}(j(z))]$. \\
Since  $[\mathbb{Q}(j(z_{i}),j(g_{i}z_{i})):\mathbb{Q}(j(z_{i}))]$ is bounded above by a  constant depending only on $p_{1},\ldots,p_{n}$ and $\mathbf{z}$ we conclude the same for $\det(g_{z,g_{i}z})$. This proves the proposition.
\end{proof}

\begin{remark}
Obtaining results like Proposition \ref{prop:gsol} for the exponential function has proven to be a rather difficult problem, which has only been fully solved for the case of plane irreducible curves in the following sense: given an irreducible polynomial $p(X,Y)$ in $\mathbb{C}[X,Y]$ and a finitely generated subfield $K$ of $\mathbb{C}$, it has been proven that there exists a finite dimensional $\mathbb{Q}$-vector subspace $L$ of $\mathbb{C}$ such that every $x$ in $\overline{K}$ with $p(x,\exp(x))=0$ is contained in $L$. Moreover, given a basis $B$ of $L$ as a $\mathbb{Q}$-vector space, there exists a positive integer $N$ such that every such $x$ is a linear combination of elements in $B$ with coefficients in $\mathbb{Z}\left[\frac{1}{N}\right]$ (see \cite[Theorem 1.1]{gunaydin} and \cite[\textsection 2]{mantova}). 
\end{remark}

\subsection{Finishing the proof}\label{subsec:end_proof_th2}

We will complete the proof of Theorem \ref{th-main-2} after one last technical lemma, which is the only conditional result used in our proof.

\begin{lem}
\label{lem:findim}
Let $V\subseteq\mathbb{C}^{2n}$ be a variety of triangular form of dimension $n$, let $K$ be a finitely generated subfield of $\mathbb{C}$ and let $B$ the set of non-special coordinates of  points $\mathbf{z}$ in $\left(\mathbb{H}\cap\overline{K}\right)^{n}$ such that $\left(\mathbf{z},j(\mathbf{z})\right)\in V_0$. Then MSC implies that $\dim_G(B)$ is finite.
\end{lem}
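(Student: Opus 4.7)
The plan is to turn the bound on the transcendence degree of $\overline{K}$ over $\mathbb{Q}$ directly into an upper bound for $\dim_G(B)$, using MSC as the bridge. First, I would observe that enlarging $K$ only makes $B$ larger, so without loss of generality I can assume that the polynomials $p_1,\ldots,p_n$ of triangular form defining $V$ have their coefficients in $K$. Set $t := \mathrm{tr.deg.}_{\mathbb{Q}}(K)$, which is finite since $K$ is finitely generated. The key preliminary observation is that for every $\mathbf{z}=(z_1,\ldots,z_n) \in (\mathbb{H}\cap\overline{K})^n$ with $(\mathbf{z},j(\mathbf{z})) \in V_0$ one automatically has $j(z_i) \in \overline{K}$ for every $i$. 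This is by induction on $i$: the relation \eqref{eq:eqtriangularform} shows $j(z_i)$ satisfies a polynomial of positive degree over $\overline{K}[j(z_1),\ldots,j(z_{i-1})]$ whose leading coefficient $q_{i,d_i}(\mathbf{z})$ is nonzero by definition of $V_0$, and $\overline{K}$ is algebraically closed. In particular, every non-special coordinate $w$ of such a point $\mathbf{z}$ satisfies both $w \in \overline{K}$ and $j(w) \in \overline{K}$.

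Now suppose toward contradiction that $\dim_G(B) > t$. Then I can choose $w_1,\ldots,w_{t+1}$ in $B$ lying in pairwise distinct $G$-orbits; by definition of $B$ these points are non-special. Applying MSC to $w_1,\ldots,w_{t+1}$ yields
\[
\mathrm{tr.deg.}_{\mathbb{Q}}\bigl(\mathbb{Q}(w_1,\ldots,w_{t+1},j(w_1),\ldots,j(w_{t+1}))\bigr) \geq t+1.
\]
But by the preceding paragraph all of $w_1,\ldots,w_{t+1},j(w_1),\ldots,j(w_{t+1})$ lie in $\overline{K}$, so the field on the left is contained in $\overline{K}$ and therefore has transcendence degree at most $t$ over $\mathbb{Q}$. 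This contradiction forces $\dim_G(B) \leq t < \infty$, completing the proof.

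The main conceptual point — and the only place where MSC is used — is the passage from the elementary fact that $\overline{K}$ has only $t$ algebraically independent elements to a bound on the number of distinct $G$-orbits; there is no real combinatorial or geometric obstacle beyond verifying that the triangular form of $V$ guarantees $j(z_i) \in \overline{K}$, which is immediate from the nonvanishing of the leading coefficients $q_{i,d_i}$ on $V_0$.
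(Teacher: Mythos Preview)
Your proof is correct and follows essentially the same route as the paper: both arguments use the triangular form (and the nonvanishing of the leading coefficients on $V_0$) to force the $j$-values into $\overline{K}$ (the paper phrases this as $\mathrm{tr.deg.}_{\mathbb{Q}}(F(\mathbf{z}_i,j(\mathbf{z}_i)))=\mathrm{tr.deg.}_{\mathbb{Q}}(F(\mathbf{z}_i))$), and then invoke MSC to bound the number of $G$-orbits by the transcendence degree of $\overline{K}$ together with that of a field of definition of $V$. The only cosmetic differences are that you absorb the field of definition into $K$ at the outset and apply MSC directly to the chosen $w_i$, whereas the paper keeps a separate field $F$ and applies the $\dim_G(\cdot\,|\,\Sigma)$ form of MSC to the full tuples $\mathbf{z}_i$; the resulting bounds ($t$ versus $\mathrm{tr.deg.}_{\mathbb{Q}}(F)+\mathrm{tr.deg.}_{\mathbb{Q}}(\overline{K})$) amount to the same thing.
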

\begin{proof}
Let $b_{1},\ldots,b_{m}$ be elements of $B$. For every $i$ in $\left\{1,\ldots,m\right\}$, choose a point $\mathbf{z}_{i}=(z_{ij})_{j=1}^n$ in $\left(\mathbb{H}\cap\overline{K}\right)^{n}$ such that  $\left(\mathbf{z},j(\mathbf{z})\right)\in V_0$ and at least one of the coordinates of $\mathbf{z}_{i}$ equals $b_{i}$. Let $F$ be a finitely generated field over which $V$ is defined. By hypothesis and the definition of $V_{0}$ we have 
\begin{eqnarray*}
    \mathrm{tr.deg.}_{\mathbb{Q}}(\mathbb{Q}(\mathbf{z}_{1},\ldots,\mathbf{z}_{m},j(\mathbf{z}_{1}),\ldots,j(\mathbf{z}_{m})))&\leq& \mathrm{tr.deg.}_{\mathbb{Q}}(F(\mathbf{z}_{1},\ldots,\mathbf{z}_{m},j(\mathbf{z}_{1}),\ldots,j(\mathbf{z}_{m}))) \\
    &=& \mathrm{tr.deg.}_{\mathbb{Q}}(F(\mathbf{z}_{1},\ldots,\mathbf{z}_{m}))\\ 
    &\leq& \mathrm{tr.deg.}_{\mathbb{Q}}(\mathbb{Q}(\mathbf{z}_{1},\ldots,\mathbf{z}_{m})) + \mathrm{tr.deg.}_{\mathbb{Q}}(F).
\end{eqnarray*}
Now, $\mathrm{tr.deg.}_{\mathbb{Q}}(\mathbb{Q}(\mathbf{z}_{1},\ldots,\mathbf{z}_{m}))\leq \mathrm{tr.deg.}_{\mathbb{Q}}\left(\overline{K}\right)$, which is finite. On the other hand, under MSC,
\begin{eqnarray*}
    \mathrm{tr.deg.}_{\mathbb{Q}}(\mathbb{Q}(\mathbf{z}_{1},\ldots,\mathbf{z}_{m},j(\mathbf{z}_{1}),\ldots,j(\mathbf{z}_{m}))) & \geq  & \dim_G(\{z_{ij}:i\in \{1,\ldots,m\},j\in\{1,\ldots,n\}\}|\Sigma) \\
    & \geq & \dim_G(\{b_{1},\ldots,b_{m}\}),
\end{eqnarray*}
hence $\dim_G(B)\leq\mathrm{tr.deg.}_{\mathbb{Q}}(F) + \mathrm{tr.deg.}_{\mathbb{Q}}\left(\overline{K}\right)$. This completes the proof.
\end{proof}

\begin{remark}
Lemma \ref{lem:findim} can be seen as an analogue of \cite[Proposition 2.2]{mantova}.
\end{remark} 

We can now give the proof of Theorem \ref{th-main-2}.

\begin{proof}[Proof of Theorem \ref{th-main-2}]
By Lemma \ref{lem:curves_are_triangular}, $V$ is of triangular form. By Lemma \ref{lem:findim} we know that there is a finite subset $B_0$ of $\mathbb{H}\cap \overline{K}$ such that, if $z$ in $\mathbb{H}\cap\overline{K}$ is not special and satisfies $(z,j(z))\in V_0$, then $z\in G\cdot B_0$. By Proposition \ref{prop:gsol}, there is a positive integer $M$ such that for every $z$ in $ B_0$ and every $g$ in $ G$ we have
$$(gz,j(gz))\in V_0\Rightarrow \mathrm{det}(\mathrm{red}(g))\leq M.$$
The group $\Gamma$ acts by left multiplication on the set $A=\{g\in \mathrm{M}^+_2(\mathbb{Z}) : \mathrm{det}(g)\leq M\}$ decomposing $A$ into finitely many $\Gamma$-orbits. This implies that the set
$$J_1:=\{j(g(z)):z\in  B_0,g\in G,(gz,j(gz))\in V_0\}$$
is finite. On the other hand, by Proposition \ref{prop:special}, the set
$$J_2:=\{j(z):z\in \mathbb{H} \mbox{ special},(z,j(z))\in V_0\}$$
is also finite. Put $J:=J_1\cup J_2$. Since $V$ is not contained in $\mathbb{C}\times J$, we can apply Proposition \ref{prop:conj1fortriangularform}. We obtain that the set
$$E_j^1\cap V_0\cap (\mathbb{C}^2\setminus (\mathbb{C}\times J))$$
is infinite. Every point in this set is of the form $(z,j(z))$ with $z$ in $\mathbb{H}$ not special and not in $\overline{K}$, hence satisfying $\mathrm{tr.deg.}_{K}(K(z,j(z))) = 1$. This completes our proof.
\end{proof}

\section{Proof of Theorem \ref{th:main3}}\label{sec:proof_of_thm_3}

In this section we present the proof of Theorem \ref{th:main3}. The main ingredient is our Proposition \ref{prop:masser}.

\begin{proof}[Proof of Theorem \ref{th:main3}]
We can write 
$$p(X,Y_1,\ldots,Y_n)=\sum_{i=0}^dp_i(X,Y_1,\ldots,Y_{n-1})Y_n^i$$
where $d$ is a positive integer and  $p_0,\ldots,p_d$ are polynomials in $\mathbb{C}[X,Y_1,\ldots,Y_{n-1}]$ with $p_d\neq 0$.
Let $R$ denote the ring $\mathbb{C}[X,Y_1,\ldots,Y_{n-1}]$ and let $r$ in $R$ be  the resultant of $p$ and $\frac{\partial p}{\partial Y_n}$ as polynomials in $R[Y_n]$ (\cite[Chapter 2,  \S2]{griffiths}). Since $p$ is irreducible, we have $r\neq 0$. There exist polynomials $F,G$ in $ R[Y_n]$ such that $Fp+G\frac{\partial p}{\partial Y_n}=r$. Since the product $rp_d$ is a non-zero polynomial in $R$, we can find a point $\mathbf{x}=(x,y_1,\ldots,y_{n-1})$ in $\mathbb{R}^n$ with $r(\mathbf{x})p_d(\mathbf{x})\neq 0$. Since $p_d(\mathbf{x})\neq 0$ and $\mathbb{C}$ is algebraically closed, we can find $y_n$ in $\mathbb{C}$ such that $p(\mathbf{x},y_n)=0$. Since $r(\mathbf{x})\neq 0$, we have $\frac{\partial p}{\partial Y_n}(\mathbf{x},y_n)\neq 0$. By the Implicit Function Theorem, there exists neighbourhoods $U_1, U_2$  of $\mathbf{x}$ in $\mathbb{C}^n$ and of $y_n$ in $\mathbb{C}$, respectively, and a holomorphic function $H:U_1\to U_2$ such that
$$\{\mathbf{z}\in U_1\times U_2:p(\mathbf{z})=0\}=\{(\mathbf{w},H(\mathbf{w})):\mathbf{w}\in U_1\}.$$
Since $U_1\cap \mathbb{R}^n\neq \emptyset$, we can apply Proposition \ref{prop:masser} to the system of equations
\begin{eqnarray*}
j(z)& =& z_1,\\
j(z_1)&=& z_2,\\
 & \vdots & \\
 j(z_{n-1}) &=& H(z,z_1,\ldots,z_{n-1}),
\end{eqnarray*}
for $(z,z_1,\ldots,z_{n-1})$ in $ U_1\cap \mathbb{H}^{n}$. Hence, this system has infinitely many solutions. Different solutions of this system give different complex numbers $z$ in $\mathbb{H}_n$ with $(z,j(z),\ldots,j_n(z))\in V$. This completes the proof of the theorem.
\end{proof}

\section{Proof of Theorem \ref{thm:main4}}\label{sec:proof_of_thm_4}

The proof of Theorem \ref{thm:main4} is given at the end of this section after a couple of lemmas. \\
Given a matrix $g=\left(\begin{array}{cc}
    a & b \\
    c & d
\end{array}\right)$ in $G$ we define the rational function $gX:=\frac{aX+b}{cX+d}$. 

\begin{lem}\label{lem:G-divisors}
Let $p(X,Y)$ be a non zero polynomial in $\mathbb{C}[X,Y]$ and let $m$ denote the degree of $p(X,Y)$ with respect to the $X$ variable. Define
$$D(p;G)=\left\{g=\left(\begin{array}{cc}
    a & b \\
    c & d
\end{array}\right)\in M^+_2(\mathbb{Z}):\mathrm{gcd}(a,b,c,d)=1, (cX+d)^mp(gX,X)=0\right\}.$$
Then, $D(p;G)$ is finite.
\end{lem}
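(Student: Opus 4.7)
The plan is to translate the vanishing condition for $(cX+d)^m p(gX,X)$ into a divisibility condition in $\mathbb{C}[U,V]$ and then use unique factorization of polynomials. To that end, to each matrix $g = \begin{pmatrix} a & b \\ c & d \end{pmatrix}$ in $M^+_2(\mathbb{Z})$ with coprime entries, associate the bilinear polynomial
\begin{equation*}
    q_g(U,V) := cUV + dU - aV - b \in \mathbb{Z}[U,V].
\end{equation*}
Since $m$ is the degree of $p$ in $X$, clearing denominators gives $(cV+d)^m p(gV,V) = P(V)$ for a polynomial $P(V) \in \mathbb{C}[V]$, and $P(V)$ vanishes identically if and only if the curve $p(U,V) = 0$ contains the affine curve parameterized by $(gV, V)$, which is precisely the zero locus of $q_g$ in $\mathbb{C}^2$.

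The next step is to observe that $q_g$ is irreducible in $\mathbb{C}[U,V]$. Indeed, any factorization $q_g = (\alpha U + \beta)(\gamma V + \delta)$ would force, by matching coefficients, the relations $\alpha\gamma = c$, $\alpha\delta = d$, $\beta\gamma = -a$, $\beta\delta = -b$, which give $(\alpha\gamma)(\beta\delta) = (\alpha\delta)(\beta\gamma)$, i.e.\ $bc = ad$, contradicting $\det(g) > 0$. (If $c = 0$, then $q_g$ is linear in $U$ with nonzero leading coefficient $d$, hence obviously irreducible.)

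Combining these two observations, by the Nullstellensatz (or directly by unique factorization), the condition $g \in D(p;G)$ is equivalent to $q_g \mid p$ in $\mathbb{C}[U,V]$. Now $p$ has only finitely many irreducible factors up to scalar, and I claim that each such factor arises from $q_g$ for at most two choices of $g \in M^+_2(\mathbb{Z})$ with coprime entries. Indeed, the coefficients of $q_g$ determine the quadruple $(a,b,c,d)$ up to a common scalar, and primitivity forces that scalar to be $\pm 1$; both $\pm g$ lie in $M^+_2(\mathbb{Z})$ since $\det(-g) = \det(g)$ and they share coprime entries. Therefore
\begin{equation*}
    |D(p;G)| \leq 2 \cdot \#\{\text{irreducible factors of } p \text{ in } \mathbb{C}[U,V]\} < \infty,
\end{equation*}
which proves the lemma. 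The only subtle point is the irreducibility of $q_g$, and that is handled by the determinant computation above; every other step is bookkeeping with unique factorization.
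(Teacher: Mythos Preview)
Your proof is correct and follows essentially the same approach as the paper: associate to each $g$ the bilinear polynomial $q_g$ (the paper calls it $L_g$), show that the vanishing condition amounts to $q_g\mid p$, note that $q_g$ is irreducible, and conclude from the finiteness of irreducible factors of $p$. Your write-up is in fact more detailed than the paper's, since you supply the irreducibility argument via the determinant identity and make the two-to-one correspondence $g\mapsto q_g$ explicit, whereas the paper simply asserts both facts.
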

\begin{proof}
Let $g=\left(\begin{array}{cc}
    a & b \\
    c & d
\end{array}\right)$ be a matrix in $M^+_2(\mathbb{Z})$. We have $(cX+d)^mp(gX,X)=0$  if and only if $p$ vanishes on $V(L_g)$, where $L_g(X,Y):=dY-b+X(cY-a)$. Since $L_g$ is irreducible in $\mathbb{C}[X,Y]$, this means that $L_g$ divides $p$. For $g,g'$ in $ D(p;G)$ with $g\neq \pm g'$ we have that the irreducible polynomials $L_g$ and $L_{g'}$ are not associated. Since $p$ is divisible by only finitely many non-associated irreducible polynomials, we conclude that $D(p;G)$ is finite. This proves the lemma. 
\end{proof}

\begin{lem}\label{lem:finite-Js}
Let $p(X,Y_1,Y_2)$ be a non zero polynomial in $\overline{\mathbb{Q}}[X,Y_1,Y_2]$ with $\frac{\partial p}{\partial Y_2}\neq 0$. Write
$$p(X,Y_1,Y_2)=\sum_{i=0}^dp_i(X,Y_1)Y_2^i,$$
where $p_i(X,Y_1)\in \overline{\mathbb{Q}}[X,Y_1]$ for $i$ in $\{0,1,\ldots,d\}$ and $p_d\neq 0$, and let
$$W=\{(x,y_1,y_2):p(x,y_{1},y_{2})=0, p_d(x,y_1)\neq 0\}.$$
Then there exist finite sets $J_1,J_2\subset \mathbb{C}$ such that
$$j\left(\{z\in \mathbb{H}_2:z\text{ is special and } (z,j(z),j_2(z))\in W\}\right)\subseteq  J_1$$
and
$$j_2\left(\{z\in \mathbb{H}_2:z\text{ is not special}, j(z)\text{ is special and } (z,j(z),j_2(z))\in W\}\right) \subseteq J_2.$$
\end{lem}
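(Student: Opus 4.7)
The plan is to exploit Schneider's theorem to convert specialness conditions into algebraicity conditions (and vice versa), and then use Lemma \ref{lem:class_number} together with Lemma \ref{lem:bound-degree} to control the class number, and hence the $\Gamma$-orbit, of the CM points that appear. The key geometric observation for the second statement is that, when $z$ is transcendental, the relation $p(z,j(z),j_2(z))=0$ forces $p(X,j(z),j_2(z))$ to vanish identically in $X$, which pins $(j(z),j_2(z))$ to a proper algebraic subset of $\mathbb{C}^2$.

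For the first statement, let $z$ be special with $(z,j(z),j_2(z))\in W$. Then $z,j(z)\in\overline{\mathbb{Q}}$, and since $p$ has algebraic coefficients and $p_d(z,j(z))\neq 0$, the value $j_2(z)=j(j(z))$ is algebraic. Because $j(z)\in\mathbb{H}$ (as $z\in\mathbb{H}_2$), Schneider's theorem applied to $j(z)$ tells us that $j(z)$ is itself a special point. Special points of $\mathbb{H}$ satisfy a non-trivial integer quadratic equation, so $[\mathbb{Q}(j(z)):\mathbb{Q}]\leq 2$ and therefore $[\mathbb{Q}(z,j(z)):\mathbb{Q}(z)]\leq 2$. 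Lemma \ref{lem:class_number} then forces $z$ to lie in the union of finitely many $\Gamma$-orbits of special points, whence $j(z)$ takes only finitely many values, giving the desired finite set $J_1$.

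For the second statement, let $z\in\mathbb{H}_2$ be non-special with $\alpha:=j(z)$ special and $(z,j(z),j_2(z))\in W$. Then $\alpha\in\overline{\mathbb{Q}}$ and $\beta:=j_2(z)=j(\alpha)\in\overline{\mathbb{Q}}$, while Schneider's theorem forces $z$ to be transcendental (otherwise $z$ would be special). Writing $p(X,Y_1,Y_2)=\sum_{k=0}^{m}P_k(Y_1,Y_2)X^k$, the identity $p(z,\alpha,\beta)=0$ with $z$ transcendental over $\overline{\mathbb{Q}}$ gives $P_k(\alpha,\beta)=0$ for every $k$. Thus $(\alpha,\beta)$ belongs to the proper algebraic subset $T:=V(P_0,\ldots,P_m)\subsetneq\mathbb{C}^2$, and it suffices to show that each irreducible component of $T$ contributes only finitely many possible values of $\beta$.

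Zero-dimensional components contribute only finitely many $\beta$. A one-dimensional component that is a vertical line $Y_1=\alpha_0$ forces $\alpha=\alpha_0$, giving $\beta=j(\alpha_0)$, a single value; a horizontal line $Y_2=\beta_0$ fixes $\beta=\beta_0$ directly. The main case is a one-dimensional irreducible component $C$ defined by an irreducible $Q(Y_1,Y_2)\in\overline{\mathbb{Q}}[Y_1,Y_2]$ depending non-trivially on $Y_2$; writing $Q(Y_1,Y_2)=\sum_{i=0}^{d'}Q_i(Y_1)Y_2^i$ with $Q_{d'}\neq 0$ and $d'\geq 1$, the relation $Q(\alpha,\beta)=0$ gives $[\mathbb{Q}(C_0,\alpha,\beta):\mathbb{Q}(C_0,\alpha)]\leq d'$ whenever $Q_{d'}(\alpha)\neq 0$, where $C_0$ is the (finite, algebraic) set of coefficients of $Q$. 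Lemma \ref{lem:bound-degree} then turns this into a uniform bound on $[\mathbb{Q}(\alpha,j(\alpha)):\mathbb{Q}(\alpha)]$, and Lemma \ref{lem:class_number} confines $\alpha$ to finitely many $\Gamma$-orbits, so that $\beta=j(\alpha)$ takes only finitely many values; the finitely many $\alpha$ with $Q_{d'}(\alpha)=0$ contribute only finitely many further values of $\beta$. The most delicate step is this last case, where one must use Lemma \ref{lem:bound-degree} to strip off the dependence of the degrees on the auxiliary field generated by the coefficients of $Q$ before Lemma \ref{lem:class_number} can be applied uniformly in the special $\alpha$.
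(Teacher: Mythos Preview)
Your argument is correct. For the first set $J_1$ your proof is essentially identical to the paper's (you get the slightly sharper bound $[\mathbb{Q}(z,j(z)):\mathbb{Q}(z)]\leq 2$ where the paper writes $\leq 4$, but this is immaterial).

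For $J_2$ you take a genuinely different route. The paper keeps the transcendental $z$ in play: it sets $C:=C_0\cup\{z\}$, observes that $[\mathbb{Q}(C,j(z),j_2(z)):\mathbb{Q}(C,j(z))]\leq d$ directly from $p_d(z,j(z))\neq 0$, and that $[\mathbb{Q}(C,j(z)):\mathbb{Q}(j(z))]_{\mathrm{alg}}\leq [\mathbb{Q}(C_0):\mathbb{Q}]$ because $z$ is transcendental over $\mathbb{Q}(j(z))$; one application of Lemma~\ref{lem:bound-degree} then bounds $[\mathbb{Q}(j(z),j_2(z)):\mathbb{Q}(j(z))]$ and Lemma~\ref{lem:class_number} finishes. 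Your approach instead \emph{eliminates} $z$ up front: the transcendence of $z$ over $\overline{\mathbb{Q}}$ forces all $X$-coefficients $P_k(\alpha,\beta)$ to vanish, pinning $(\alpha,\beta)$ to a proper algebraic subset $T\subset\mathbb{C}^2$ defined over $\overline{\mathbb{Q}}$, and you then run the degree/class-number argument component by component (using throughout the extra relation $\beta=j(\alpha)$ to handle vertical lines and bad leading coefficients). This is perfectly valid and has the pleasant feature that it never invokes the condition $p_d(z,j(z))\neq 0$, so it actually proves the $J_2$ statement for the full zero set of $p$ rather than only for $W$. The cost is a longer case analysis; the paper's single invocation of Lemma~\ref{lem:bound-degree} with $z$ in the auxiliary field is quicker and avoids decomposing $T$.
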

\begin{proof}
We claim that there exist only finitely many $\Gamma$-orbits of special points $z$ in $\mathbb{H}_2$ with $(z,j(z),j_2(z))\in W$. Indeed, let $C_0$ as the set of coefficients of $p$. If $z$ is a special point in $\mathbb{H}_2$ with $(z,j(z),j_2(z))\in W$, then $j_2(z)$ is algebraic over $\mathbb{Q}(C_0,z,j(z))$, 
hence it is algebraic over $\mathbb{Q}$. This implies that $j(z)$ is a special point in $\mathbb{H}$, therefore $[\mathbb{Q}(z,j(z)):\mathbb{Q}(z)]\leq 4$. By Lemma \ref{lem:class_number} we have that $z$ must belong to a finite set of $\Gamma$-orbits that is independent of $z$, as claimed. We conclude that there exists a finite set $J_1\subset \mathbb{C}$ such that for every special point $z$ in $\mathbb{H}_2$ with $(z,j(z),j_2(z))\in W$ we have $j(z)\in J_1$.\\
%
Now, assume that $z$ is a non-special point in $\mathbb{H}_2$ with $j(z)$ special and $(z,j(z),j_2(z))\in W$. Since $j(z)$ is algebraic and $z$ is not special, we have that $z$ is transcendental. Put $C:=C_0\cup \{z\}$. We have that $j_2(z)$ is algebraic over $\mathbb{Q}(j(z))$, $$[\mathbb{Q}(C,j(z),j_2(z)):\mathbb{Q}(C,j(z))]=[\mathbb{Q}(C_0,z,j(z),j_2(z)):\mathbb{Q}(C_0,z,j(z))]\leq d$$
and 
\begin{eqnarray*}
  [\mathbb{Q}(C,j(z)):\mathbb{Q}(j(z))]_{\text{alg}} &=&[\mathbb{Q}(C_0,z,j(z)):\mathbb{Q}(j(z))]_{\text{alg}}\\
  &\leq &[\mathbb{Q}(C_0,z,j(z)):\mathbb{Q}(j(z),z)]\\
  &\leq& [\mathbb{Q}(C_0):\mathbb{Q}].
\end{eqnarray*}
By Lemma \ref{lem:bound-degree} we conclude that $[\mathbb{Q}(j_2(z),j(z)):\mathbb{Q}(j(z))]$ is bounded above by a constant depending only on $p$. Then, by Lemma \ref{lem:class_number}, we have that $j(z)$ must belong to a finite set of $\Gamma$-orbits that depends only on $p$. This proves that there exists a finite set $J_2\subset \mathbb{C}$ depending only on $p$ such that for every non-special point $z$ in $\mathbb{H}_2$ with $j(z)$ special and $(z,j(z),j_2(z))\in W$ we have $j_2(z)\in J_2$. This completes the proof of the lemma.
\end{proof}

\begin{proof}[Proof of Theorem \ref{thm:main4}]
There exists an integer $d\geq 1$ and polynomials $p_0,\ldots,p_d$ in $\overline{\mathbb{Q}}[X,Y_1]$ with $p_d\neq 0$ such that
$$p(X,Y_1,Y_2)=\sum_{i=0}^dp_i(X,Y_1)Y_2^i.$$
Put $W=\{(x,y_1,y_2)\in V: p_d(x,y_1)\neq 0\}$ and let $J_1,J_2$ be the finite subsets of $\mathbb{C}$ given by Lemma \ref{lem:finite-Js}. Define $r$ in $\mathbb{C}[X,Y_1]$ as the resultant of the polynomials $p$ and $\frac{\partial p}{\partial Y_2}$ (\cite[Chapter 2,  \S2]{griffiths}). The set 
\begin{eqnarray*}
A & := &\{(x,y_1)\in \mathbb{C}^2:r(x,y_1)p_d(x,y_1)\neq 0, y_1\not \in J_1\}
\end{eqnarray*}
is Zariski open in $\mathbb{C}^2$ and non-empty. 
Thus, we can find a point $(x_0,y_0)$ in $A\cap \mathbb{R}^2$. We choose a point $y_1$ in $\mathbb{C}$ with $p(x_0,y_0,y_1)=0$ and use the Implicit Function Theorem in order to find open neighbourhoods $U_1$ of $(x_0,y_0)$ in $\mathbb{C}$ and  $U_2$ of  $y_1$ in $\mathbb{C}$ such that
$$V\cap (U_1\times U_2)=\{(z,z_1,H(z,z_1)):(z,z_1)\in U_1\}$$
for some holomorphic function $H:U_1\to U_2$. By shrinking $U_1$, if necessary, we can assume that $U_1$ is connected and $U_1\subseteq A$. For a positive integer $N$ (to be chosen conveniently later) we define $F_N$ in $\mathbb{C}[Y_1,Y_2]$ as 
$$F_N(Y_1,Y_2)=\prod_{i=1}^N\Phi_i(Y_1,Y_2)\cdot \prod_{t \in J_2}(Y_2-t),$$
where $\Phi_1,\ldots,\Phi_N$ are the first $N$ modular polynomials. We claim that
\begin{equation}\label{eq:F-non-zero}
    \{(z,z_1)\in U_1\cap \mathbb{R}^2:F_N(z_1,H(z,z_1))\neq 0\}\neq \emptyset.
\end{equation}
Indeed, if this set were empty, then we would have $F_N(z_1,H(z,z_1))=0$ for every $(z,z_1)$ in $U_1$ and then
$$V\cap (U_1\times U_2)\subseteq V(p,F_N).$$
By Corollary \ref{cor:generic_contention} we get that $V$ is contained in $V(p,F_N)$ thus $p$ must divide $F_N$, which is impossible since $F_N$ depends only on $Y_1$ and $Y_2$ while $p$ depends also on $X$. This proves \eqref{eq:F-non-zero}.\\
Consider the system of equations
\begin{equation}\label{system}
\begin{array}{ccc}
j(z) &=& z_{1}, \\
j(z_{1}) &=& H(z,z_1), \\
\frac{1}{j(w)} &=& F_N\left(z_1,H(z,z_1)\right),
\end{array}
\end{equation}
for $(z,z_1,w)$ in $\mathbb{H}^{3}\cap (U_1\times \mathbb{C})$. Since $\frac{1}{j}$ attains all values in $\mathbb{C}\setminus\{0\}$ and $F_N(z_1,H(z,z_1))$ is non identically zero (by \eqref{eq:F-non-zero}) we can apply Proposition \ref{prop:masser}. This way we obtain infinitely many solutions of this system. \\
For the rest of the proof, we let $(z,z_1,w)$ be a solution of \eqref{system}. Clearly, we have $z\in \mathbb{H}_2$ and
$$(z,j(z),j_2(z))=(z,z_1,H(z,z_1))\in W\subseteq V.$$
Since $U_1\subseteq A$ we have $j(z)\not \in J_1$ thus $z$ is not special. Moreover, since $F\left(z_1,H(z,z_1)\right)=\frac{1}{j(w)}\neq 0$, we have $H(z,z_1)\not \in J_2$ and $j(z)$ is also not special.
Now, assuming MSC 
we get the inequality
$$\mathrm{tr.deg.}_{\mathbb{Q}}(\mathbb{Q}(z,j(z),j(z),j_{2}(z)))\geq \mathrm{dim}_G(z,j(z)).$$
In order to complete our proof, we will show that $z$ and $j(z)$ cannot be in the same $G$-orbit if $N$ is big enough. Indeed, let $m$ be the degree of $p(X,Y_{1},Y_{2})$ in the $X$ variable, let $D(p_d;G)$ be the set defined in Lemma \ref{lem:G-divisors} and put $N_1=\max\{\det(g):g\in D(p_d;G)\}$ (choose $N_1=1$ if $D(p_d;G)$ is empty). Define $N_2$ as the total degree of $p$. We choose $N$ to be the any integer with $N\geq \max\{N_1,N_2\}$. Now, assume that $z$ and $j(z)$ are in the same $G$-orbit. 
Let $g$ be an element of  $G$ such that $z = g j(z)$. Using the notation of \textsection\ref{sec:modular_polynomials}, let $a,b,c,d$ be the entries (in the usual way) of $\mathrm{red}(g)$. Put $M=\mathrm{det}(\mathrm{red}(g))$. Then $\Phi_{M}(j(z),j_{2}(z)) =0$. Define the polynomial 
\begin{equation*}
    q(X,Y) := (cX+d)^{m}p\left(\mathrm{red}(g)X,X,Y\right) \text{ in }\overline{\mathbb{Q}}[X,Y],
\end{equation*}
and set $V_{1} = V(q)$ and $V_{2}=V(\Phi_M)$ as affine subvarieties of $\mathbb{C}^{2}$. Observe that $(j(z),j_2(z))\in V_{1}\cap V_{2}$. As $j(z)$ is not special, it cannot happen that both $j(z)$ and $j_2(z)$ are algebraic. Since $V_{2}$ is irreducible and defined over $\mathbb{Q}$ (hence over $\overline{\mathbb{Q}}$), and $\text{tr.deg.}_{\mathbb{Q}}(\mathbb{Q}(j(z),j_2(z)))=1$, we get $V_{2} \subseteq V_{1}$ by Lemma \ref{lem:generic}. This means that either $V_{1} = \mathbb{C}^{2}$ or $\Phi_{M}(X,Y)$ divides $q(X,Y)$. In the first case we have $q(X,Y)=0$, thus $(cX+d)^{m}p_d(\mathrm{red}(g)X,X)=0$ and $\mathrm{red}(g)\in D(p_d;G)$. This implies that $M\leq N$ hence $F_N(j(z),j_2(z))=0$, which is a contradiction since $F_N(j(z),j_2(z))=\frac{1}{j(w)}\neq 0$. In the second case $\Phi_{M}(X,Y)$ divides $q(X,Y)$, thus
$$M\leq \deg(\Phi_M)\leq \deg q \leq N_2\leq N,$$
and $F_N(j(z),j_2(z))=0$, which gives the same contradiction as in the first case. This proves that $z$ and $j(z)$ cannot be in the same $G$-orbit and, by MSC, we conclude that $\mathrm{tr.deg.}_{\mathbb{Q}}(\mathbb{Q}(z,j(z),j_{2}(z)))=2$. By choosing different points $(x_0,y_0)$ in $A\cap \mathbb{R}^2$ we get that there are actually infinitely many $z$ in $\mathbb{H}_2$ satisfying the desired conditions. This completes the proof of the theorem.
\end{proof} 


\section{Further directions}
\label{sec:further}


The purpose of this final section is to show how the same ideas used in our proof of Proposition \ref{prop:masser} can be applied in other contexts. Specifically, we present two results: the existence of solutions of certain analytic equations involving $j'$ (the usual derivative of the $j$ function), and of equations involving the function $\exp(1/z)$. As mentioned in the introduction, we expect to study this type of problems in more detail in future work.


\begin{prop}
\label{prop:j'}
 Let $U$ be an open subset of $\mathbb{C}$ with $U\cap \mathbb{R}\neq \emptyset$ and  let $H:U\rightarrow\mathbb{C}$ be a holomorphic function. 
 Then there are infinitely many $z$ in $\mathbb{H}\cap U$ such that
\begin{eqnarray*}
j'(z) &=& H(z).
\end{eqnarray*}
\end{prop}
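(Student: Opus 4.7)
The plan is to adapt the Rouch\'e-based argument of Proposition \ref{prop:masser}, with one key modification. The essential difficulty is that $j'$ is not automorphic; instead it transforms as a weight-two object, $j'(\gamma z)=(cz+d)^{2}j'(z)$ for $\gamma=\left(\begin{smallmatrix}a&b\\c&d\end{smallmatrix}\right)\in\Gamma$. So if we pick some $w\in\mathbb{H}$ with $j'(w)=\alpha$ and shift it by a sequence $\gamma_{k}\in\Gamma$ with $\gamma_{k}w\to x_{0}\in U\cap\mathbb{R}$, the value $j'(\gamma_{k}w)=(c_{k}w+d_{k})^{2}\alpha$ blows up and can no longer match $H(\gamma_{k}w)$, which stays bounded near $x_{0}$. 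The trick is to center the argument at a \emph{zero} of $j'$, so that the $(c_{k}w+d_{k})^{2}$ factor in the transformation law is compensated by the smallness of $j'(w)$.

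Concretely, I would take $w_{0}=i\in\mathbb{H}$; the ramification of $j$ at $i$ (the stabilizer of $i$ in $\Gamma/\{\pm I\}$ has order $2$) gives $j(z)-1728=c(z-i)^{2}+O((z-i)^{3})$ with $c\neq 0$, so $j'$ has a simple zero at $w_{0}$, and by the transformation law $j'(\gamma w_{0})=0$ for every $\gamma\in\Gamma$. Fix any $x_{0}\in U\cap\mathbb{R}$; since the limit set of $\Gamma$ equals $\mathbb{R}\cup\{\infty\}$, \cite[Theorem 5.3.9]{beardon} supplies a sequence $\gamma_{k}=\left(\begin{smallmatrix}a_{k}&b_{k}\\c_{k}&d_{k}\end{smallmatrix}\right)\in\Gamma$ with $\gamma_{k}w_{0}\to x_{0}$. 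Choose a small closed disk $B=\overline{B}(w_{0},\delta)\subset\mathbb{H}$ on which $|j'(w)|\geq \frac{|j''(i)|\delta}{2}$ for $w\in\partial B$; the M\"obius distortion identity $|\gamma_{k}w-\gamma_{k}w'|=|w-w'|/(|c_{k}w+d_{k}|\,|c_{k}w'+d_{k}|)$ combined with $|c_{k}w_{0}+d_{k}|^{2}=\mathrm{Im}(w_{0})/\mathrm{Im}(\gamma_{k}w_{0})\to\infty$ gives $\mathrm{diam}(\gamma_{k}B)\to 0$, so eventually $\gamma_{k}B\subset U$ and $H$ is uniformly bounded on $\gamma_{k}B$ in terms of $|H(x_{0})|$.

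The key computation is a Rouch\'e estimate on $B$ for the pair $f_{k}(w):=(c_{k}w+d_{k})^{2}j'(w)$ and $g_{k}(w):=-H(\gamma_{k}w)$, whose sum $F_{k}=f_{k}+g_{k}$ vanishes at precisely those $w\in B$ for which $z=\gamma_{k}w$ solves $j'(z)=H(z)$. The standard inequality $|c_{k}w+d_{k}|\geq(1-\delta)|c_{k}w_{0}+d_{k}|$ on $B$, valid because $|c_{k}|\leq|c_{k}w_{0}+d_{k}|$, yields
\begin{equation*}
    \min_{w\in\partial B}|f_{k}(w)|\ \geq\ \frac{C}{\mathrm{Im}(\gamma_{k}w_{0})}\longrightarrow\infty
\end{equation*}
for some $C=C(\delta,j''(i))>0$, while $|g_{k}|$ stays bounded. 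Thus $|f_{k}|>|g_{k}|$ on $\partial B$ for all large $k$, and Rouch\'e's theorem gives that $F_{k}$ has the same number of zeros on $B$ as $f_{k}$; the latter has exactly one, namely the simple zero of $j'$ at $w_{0}$ (note that $(c_{k}w+d_{k})^{2}$ does not vanish in $\mathbb{H}\supset B$). So $j'(z)=H(z)$ has a solution in $\gamma_{k}B$ for each large $k$, and after thinning the sequence so that the disks $\gamma_{k}B$ are pairwise disjoint — possible since $\mathrm{diam}(\gamma_{k}B)\to 0$ — we obtain infinitely many distinct solutions in $U\cap\mathbb{H}$.

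The main obstacle, then, is precisely the non-automorphy of $j'$, which is what is neutralized by choosing $w_{0}$ to be a zero of $j'$. Any other zero works equally well: replacing $w_{0}=i$ by $w_{0}=\rho$ would simply produce two Rouch\'e-zeros per disk instead of one, owing to the double order of $j'$ at $\rho$.
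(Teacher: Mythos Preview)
Your argument is correct and follows essentially the same route as the paper: pick a zero of $j'$ (you use $i$, the paper uses $\rho$), translate it by $\gamma_{k}\in\Gamma$ toward a point $x_{0}\in U\cap\mathbb{R}$, and apply Rouch\'e using that $|c_{k}w+d_{k}|^{2}\to\infty$ forces $|j'(\gamma_{k}w)|$ to dominate the bounded quantity $|H(\gamma_{k}w)|$ on the boundary. The only cosmetic difference is that you pull back and apply Rouch\'e on the fixed disk $B$ to the pair $(c_{k}w+d_{k})^{2}j'(w)$ and $-H(\gamma_{k}w)$, whereas the paper pushes forward and applies Rouch\'e on $\gamma_{k}B$ to $j'(z)$ and $-H(z)$; these are the same comparison under the change of variable $z=\gamma_{k}w$.
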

\begin{proof}
Let $\rho$ denote the complex number $\frac{-1}{2}+i\frac{\sqrt{3}}{2}$. It is known that $ j'(\rho)=0$. Since $j'$ satisfies the transformation property
$$j'\left(\gamma z\right)=(cz+d)^2j'(z) \text{ for every }z \text{ in } \mathbb{H} \text{ and every }\gamma=\left(\begin{array}{cc}a&b\\ c&d\end{array}\right)\text{ in } \Gamma,$$
we  have $j'(\gamma \rho) = 0$ for every $\gamma$ in $\Gamma$. Choose a small closed disk $B\subset\mathbb{H}$ around $\rho $ such that $j'(z)$ does not vanish on the boundary of $B$. Define $l:=\min \{\mathrm{Im}(z):z\in B\}$, put $\delta=\min\{|j'(z)|:z\in \partial B\}$ and let~$x_0$ be a point in~$U\cap \mathbb{R}$. Since $H$ is continuous, there exists an open neighbourhood  $V$ of $x_0$ contained in $U$ such that for every $z$ in $V$ we have $|H(z)|<|H(x_0)|+1$. Let $(\gamma_{n})_{n=1}^{\infty}$ be a sequence of elements in $\Gamma$ such that $|\gamma_{n}\rho - x_0|\to 0$ as $n$ goes to infinity. Then, we have $|\gamma_n z- x_0|\to 0$ as $n$ goes to infinity, uniformly for $z$ in $B$ (see the proof of Proposition \ref{eq:masser} in \textsection\ref{sec:equations_automorphic}), and there exists a positive integer $N_1$ such that $\gamma_n B\subseteq V$ for every $n\geq N_1$. \\
For every $\gamma=\left(\begin{array}{cc}a&b\\ c&d\end{array}\right)$ in  $\Gamma$ and every $z$ in $B$ we have
$$\mathrm{Im}(\gamma z)=\frac{\mathrm{Im}(z)}{|cz+d|^2}\geq \frac{l}{|cz+d|^2}.$$ 
Put $\gamma_n=\left(\begin{array}{cc}a_n&b_n\\ c_n&d_n\end{array}\right)$. Since $\mathrm{Im}(\gamma_n z)\to 0$  uniformly for $z$ in $B$, we have $|c_n z+d_n|^2\to \infty$ also uniformly for $z$ in $B$. It follows that  there exists a positive integer $N_2$ such that for every integer $n\geq N_2$ and every $z$ in $B$ we have $|c_nz+d_n|^2>\delta ^{-1}(|H(x_0)|+1)$. 
Define $N:=\max\{N_1,N_2\}$. Then, for every $z$ in $\partial B$ and every $n\geq N$ we have
$$|j'(\gamma_n z)|=|c_nz+d_n|^2|j'(z)|\geq |H(x_0)|+1>|H(z)|.$$
Hence, we can apply Rouch\'e's theorem to the functions $f(z):=j'(z)$ and $g(z):=-H(z)$ on $\gamma_nB$. Since $f$ has a zero in the interior of $\gamma_nB$ (namely $\gamma_n\rho$), we get that $(f+g)(z)=j'(z)-H(z)$ also has a zero there. In particular, the equation $j'(z)=H(z)$ has a solution in $\gamma_nB$. This implies the desired result.
\end{proof}

\begin{remark}
We point out that the proof of Proposition \ref{prop:j'} can be adapted to show that the equation $j''(z) = H(z)$ has infinitely many solutions. This is because the second derivative of $j$ also satisfies $j''(\gamma\rho) = 0$ for every $\gamma$ in $\Gamma$. 
\end{remark}

\begin{prop}
Let $U$ be an open subset of $\mathbb{C}$ containing $0$, and let $f$ be a holomorphic function defined on $U$ such that $f(0)\neq 0$. Then there are infinitely many $z$ in $U$ such that
\begin{equation*}
\label{eq:exp}
\exp(1/z) = f(z).    
\end{equation*}
\end{prop}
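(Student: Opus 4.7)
My plan is to adapt the Rouch\'e-based strategy used in the proofs of Propositions \ref{prop:masser} and \ref{prop:j'}. The key observation is that $\exp$ is periodic with period $2\pi i$, so the preimages under $\exp$ of any nonzero value form a vertical arithmetic progression $w_0 + 2\pi i \mathbb{Z}$. Inverting via the M\"obius map $z = 1/w$ sends such a progression to a sequence of points accumulating at $0 \in U$, which is exactly where I want to find solutions.

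Concretely, setting $\alpha := f(0) \neq 0$, I would fix some $w_0 \in \mathbb{C}$ with $\exp(w_0) = \alpha$ and put $w_k := w_0 + 2\pi i k$ for every $k \in \mathbb{Z}$, so that $\exp(w_k) = \alpha$ for all $k$. I would then choose a small closed disk $B$ centered at $w_0$, not containing $0$ and small enough that $w_0$ is the unique zero of $\exp(w) - \alpha$ in $B$, and define
$$\delta := \min\{|\exp(w) - \alpha| : w \in \partial B\} > 0.$$
By periodicity of $\exp$, the translated disks $B_k := B + 2\pi i k$ all satisfy $|\exp(w) - \alpha| \geq \delta$ on $\partial B_k$, uniformly in $k$.

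Next I would transfer to the $z$-variable. For $|k|$ sufficiently large, $B_k$ is disjoint from $0$, and hence its image $V_k := \{1/w : w \in \overline{B_k}\}$ under the M\"obius map is a closed Euclidean disk with smooth Jordan boundary, whose diameter tends to $0$ as $|k| \to \infty$. Thus, for all sufficiently large $|k|$, one has $V_k \subset U$, and by continuity of $f$ at $0$ also $|f(z) - \alpha| < \delta$ for every $z \in V_k$. Applying Rouch\'e's theorem to the holomorphic functions $F(z) := \exp(1/z) - \alpha$ and $G(z) := \alpha - f(z)$ on $V_k$, the inequality $|F(z)| \geq \delta > |G(z)|$ on $\partial V_k$ together with the fact that $F$ has a zero at $1/w_k \in \mathrm{int}(V_k)$ forces $F + G = \exp(1/z) - f(z)$ to vanish somewhere in $V_k$. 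Since the disks $V_k$ are pairwise disjoint for $|k|$ large, this yields infinitely many solutions to $\exp(1/z) = f(z)$ in $U$.

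There is no serious obstacle here beyond verifying uniformity of the boundary bound across the translates, which is automatic from the periodicity of $\exp$; this makes the argument noticeably cleaner than that of Proposition \ref{prop:j'}, where the analogous estimate required a separate analysis of the cocycle $(cz+d)^2$ coming from the modular transformation law of $j'$. The role of the ``group action'' moving a fixed preimage of $\alpha$ towards the target region is played simply by translation by $2\pi i \mathbb{Z}$.
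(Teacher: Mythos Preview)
Your proof is correct and follows essentially the same Rouch\'e-based strategy as the paper's. The only cosmetic difference is that the paper phrases the periodicity as invariance of $\exp(1/z)$ under the M\"obius action of the group $\Lambda=\left\{\gamma_n=\begin{pmatrix}1&0\\2\pi i n&1\end{pmatrix}:n\in\mathbb{Z}\right\}$ on the $z$-variable (so the translated disks appear as $\gamma_n B$), whereas you work first in the $w=1/z$ coordinate, use ordinary $2\pi i$-periodicity of $\exp$, and then transfer back; these are the same argument in conjugate coordinates.
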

\begin{proof}
Let $\Lambda$ denote the group of matrices 
\begin{equation*}
\Lambda=\left\{\gamma_{n}:=
\left(\begin{matrix}
1 & 0 \\
2\pi in & 1
\end{matrix}\right): n\in\mathbb{Z}\right\}.
\end{equation*}
Then $\Lambda$ acts on $\widehat{\mathbb{C}}=\mathbb{C}^{\times}\cup \{\infty\}$ through M\"obius transformations and $\exp(1/z)$ is invariant under this action.  Put $\alpha = f(0)$. Let $x$ in $\mathbb{C}^{\times}$ be such that $\exp(1/x) = \alpha$. Let $B$ be a small closed disk around $x$ contained in $\mathbb{C}^{\times}$  such that $\exp(1/z)\neq\alpha$ for all $z$ in $\partial B$. Set $g(z):= \exp(1/z) - \alpha$ and $h(z):= \alpha - f(z)$. As $g(z)$ does not vanish on $\partial B$, the real number $\delta:=\min \left\{|g(z)|:z\in\partial B\right\}$ is positive. Furthermore, as $g$ is invariant under the action of $\Lambda$, we have $\delta=\min \{|g(z)|:z\in\partial (\gamma_{n}B)\}$ for every integer $n$.\\
Observe that for every $z$ in $\mathbb{C}^{\times}$, $\gamma_{n}z\to 0$ as $n$ goes to infinity. Moreover, for every compact subset $A$ of $\mathbb{C}^{\times}$ the convergence is uniform for $z$ in $A$. As $f$ is continuous at $0$ we have that $f(\gamma_{n}z)\to\alpha$ as $n$ goes to infinity, uniformly for $z$ in $B$. This implies that there exists a positive integer $N$ such that for every integer $n>N$, we have $|h(z)|<\delta$ for every $z$ in $\gamma_{n}B$. Hence, if $n>N$ we have $|g(z)|\geq \delta>|h(z)|$ for every $z$ in $\partial(\gamma_{n}B)$. By Rouch\'e's theorem, we get that $g(z)$ and $g(z)+h(z)$ have the same number of zeros (counting orders) in $\gamma_{n}B$, for all $n>N$. Now, $g(\gamma_{n}x) = g(x) = 0$ so $g(z)+h(z) = \exp(1/z) - f(z)$ has at least one zero in $\gamma_{n}B$. This implies the desired result and completes the proof of the proposition. 
\end{proof}


\section*{Acknowledgements}
The first author would like to thank Vahagn Aslanyan and Jonathan Pila for many useful discussions around the topics presented here. We  thank the anonymous referees for their valuable comments that helped us improve the exposition.

The first author was partially funded by CONICYT PFCHA/Doctorado Becas Chile/2015-72160240. The second author was partially funded by CONICYT FONDECYT/Postdoctorado Nacional 3190086.

\bibliographystyle{siam}

\end{document}